
\documentclass[12pt]{amsart}
\usepackage[utf8]{inputenc}

\usepackage[centering, margin={0.8in, 1.0in}, includeheadfoot]{geometry}

\usepackage{mathrsfs}
\usepackage{amsfonts}
\usepackage{tikz}
\usepackage{amssymb,bm}
\usepackage{amsmath}
\usepackage{amsthm}
\usepackage{mathtools}
\usepackage{palatino}
\usepackage{tabularx}

\newtheorem{thm}{Theorem}[section]

\newtheorem{lem}[thm]{Lemma}
\newtheorem{cor}[thm]{Corollary}

\newtheorem{prop}[thm]{Proposition}
\newtheorem{rmk}[thm]{Remark}

\newcommand{\Tmq}{\operatorname{Tr}_{\mathbb{F}_{q^m}/\F}}
\newcommand{\Tqp}{\operatorname{Tr}_{\mathbb{F}_{q}/\mathbb{F}_p}}
\newcommand{\ord}{\operatorname{ord}}
\newcommand{\Tmp}{\operatorname{Tr}_{\mathbb{F}_{q^m}/\mathbb{F}_p}}
\newcommand{\F}{\mathbb{F}_{q}}
\newcommand{\Fr}{\mathbb{F}_{q^m}}
\newcommand{\rad}{\operatorname{Rad}}

\newcommand{\rmv}[1]{}

\begin{document}

\title{On the number of $N$-free elements with prescribed trace}

\author{Aleksandr Tuxanidy and Qiang Wang}

\address{School of Mathematics and Statistics, Carleton
University,
 1125 Colonel By Drive, Ottawa, Ontario, K1S 5B6,
Canada.} 

\email{AleksandrTuxanidyTor@cmail.carleton.ca, wang@math.carleton.ca}

\keywords{$N$-free, character, Gaussian sum, Gaussian period, semi-primitive, primitive, irreducible polynomial, trace, Mersenne prime, 
uniform, prescribed coefficient, finite fields.\\}
 
\thanks{The research of Aleksandr Tuxanidy and Qiang Wang is partially supported by OGS and NSERC, respectively, of Canada.}

\begin{abstract}
 In this paper we derive a formula for the number of $N$-free elements over a finite field $\mathbb{F}_q$ with prescribed trace, in particular trace zero, in terms of Gaussian periods. 
 As a consequence, we derive a simple explicit formula for the number of primitive elements, in quartic extensions of Mersenne prime fields, having absolute trace zero. 
We also give a simple formula in the case when $Q = (q^m-1)/(q-1)$ is prime.
More generally, for a positive integer $N$ whose prime factors divide $Q$ and satisfy the so called semi-primitive condition, we give an explicit formula for the number of $N$-free elements with arbitrary trace.
In addition we show that if all the prime factors of $q-1$ divide $m$, then the number of primitive elements in $\mathbb{F}_{q^m}$, 
with prescribed non-zero trace, is uniformly distributed. Finally we explore the related number, $P_{q, m, N}(c)$, of elements in $\mathbb{F}_{q^m}$
with multiplicative order $N$ and having trace $c \in \F$. Let $N \mid q^m-1$ such that $L_Q \mid N$, where $L_Q$ is the largest factor of $q^m-1$ with the same radical as that of $Q$. 
We show there exists an element in $\mathbb{F}_{q^m}^*$ of (large) order $N$ with trace $0$ if and only if $m \neq 2$ and $(q,m) \neq (4,3)$. 
Moreover we derive an explicit formula for 
the number of elements in $\mathbb{F}_{p^4}$ with the corresponding large order $L_Q = 2(p+1)(p^2+1)$ and having absolute trace zero, where $p$ is a Mersenne prime.
\end{abstract}

 \maketitle

\rmv{

\begin{document}

\begin{frontmatter}
\title{On the number of $N$-free elements with prescribed trace\tnoteref{t1}}
\tnotetext[t1]{Research of the authors is partially supported by OGS and NSERC, respectively, of Canada.}

\author[Wang]{Aleksandr Tuxanidy}
\ead{AleksandrTuxanidyTor@cmail.carleton.ca}

\author[Wang]{Qiang Wang\corref{cor1}}
\ead{wang@math.carleton.ca}
\address[Wang]{School of Mathematics and Statistics,
Carleton University,
Ottawa, ON K1S 5B6,
Canada}
\cortext[cor1]{Corresponding author}

 \begin{abstract}
In this paper we derive a formula for the number of $N$-free elements over a finite field $\mathbb{F}_q$ with prescribed trace, in particular trace zero, in terms of Gaussian periods. 
 As a consequence, we derive a simple explicit formula for the number of primitive elements, in quartic extensions of Mersenne prime fields, having absolute trace zero. 
We also give a simple formula in the case when $Q = (q^m-1)/(q-1)$ is prime.
More generally, for a positive integer $N$ whose prime factors divide $Q$ and satisfy the so called semi-primitive condition, we give an explicit formula for the number of $N$-free elements with arbitrary trace.
In addition we show that if all the prime factors of $q-1$ divide $m$, then the number of primitive elements in $\mathbb{F}_{q^m}$, 
with prescribed non-zero trace, is uniformly distributed. Finally we explore the related number, $P_{q, m, N}(c)$, of elements in $\mathbb{F}_{q^m}$
with multiplicative order $N$ and having trace $c \in \F$. Let $N \mid q^m-1$ such that $L_Q \mid N$, where $L_Q$ is the largest factor of $q^m-1$ with the same radical as that of $Q$. 
We show there exists an element in $\mathbb{F}_{q^m}^*$ of (large) order $N$ with trace $0$ if and only if $m \neq 2$ and $(q,m) \neq (4,3)$. 
Moreover we derive an explicit formula for 
the number of elements in $\mathbb{F}_{p^4}$ with the corresponding large order $L_Q = 2(p+1)(p^2+1)$ and having absolute trace zero, where $p$ is a Mersenne prime.
\end{abstract}

\begin{keyword}
$N$-free \sep character \sep Gaussian sum \sep  Gaussian period \sep semi-primitive \sep  primitive \sep irreducible polynomial \sep trace \sep Mersenne prime \sep 
uniform \sep  prescribed coefficient \sep  finite fields.

{\rm MSC}: 11T06, 11T24
\end{keyword}

\end{frontmatter}

}

\section{Introduction}

Let $q$ be the power of a prime number $p$ and let $\F$ be a finite field with $q$ elements. 
In 1992, Hansen and Mullen \cite{hansen} conjectured that, except for very few exceptions, there exist irreducible and primitive polynomials of degree $m$ over $\F$ with any prescribed coefficient respectively. 
This led to a great deal of work in the area, and both of these conjectures have since been resolved in the affirmative 
(see \cite{wan, ham} for irreducibles, as well as see the survey in \cite{cohen survey} and \cite{cohen and presern (2008)} for primitives). 

Particular interest has also been placed in deriving explicit formulas for the exact number of irreducible polynomials of degree $m$ over $\F$ 
with one or more prescribed coefficients (see for example \cite{carlitz, koma, kuzmin, kuzmin 91, yucas} and the survey \cite{cohen survey} or Section 3.5 by S. D. Cohen in the Handbook of finite fields \cite{handbook}).
Here it is worth mentioning the following beautiful formula due to Carlitz \cite{carlitz} describing the number of monic irreducible polynomials of degree $m$ with a prescribed trace coefficient
(the coefficient of $x^{m-1}$). Let $I_{q, m}(c)$ denote the number of monic irreducible polynomials of degree $m$ over $\F$ with trace $c$. Let $\mu$ be the M\"{o}bius function. 

\begin{thm}[{\bf Carlitz (1952)}]\label{thm: carlitz}
Let $q $ be a power of a prime $p$ and let $m \in \mathbb{N}$. Then for any non-zero element $c \in \F\setminus\{0\}$, the number of monic irreducible polynomials of degree $m$ over $\F$ and 
with trace $c$ is given by
$$
I_{q,m}(c \neq 0) = \dfrac{1}{qm} \sum_{\substack{d \mid m \\ p \nmid d}} \mu(d) q^{m/d} = \dfrac{I_{q,m} - I_{q,m}(0)}{q-1},
$$
where
$$
I_{q,m} = \dfrac{1}{m} \sum_{d \mid m} \mu(d) q^{m/d}
$$
is the number of irreducible polynomials of degree $m$ over $\F$. 
\end{thm}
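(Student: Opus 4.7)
The plan is to establish the formula in two stages. First, for the right-hand equality $I_{q,m}(c\neq 0) = (I_{q,m} - I_{q,m}(0))/(q-1)$, I would argue that $I_{q,m}(c)$ takes the same value for every nonzero $c \in \F^*$. The key observation is that for any $a \in \F^*$, multiplication by $a$ is an $\F$-linear bijection of $\Fr$ that preserves the property of having exact degree $m$ over $\F$ and sends $\Tmq(\alpha)$ to $a\,\Tmq(\alpha)$. Since each monic irreducible of degree $m$ with trace $c$ contributes exactly $m$ Galois-conjugate generators of $\Fr$ with trace $c$, this scaling gives a bijection between such generators for traces $c$ and $ac$, hence $I_{q,m}(c) = I_{q,m}(ac)$. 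Summing over cosets then yields $I_{q,m} = I_{q,m}(0) + (q-1)I_{q,m}(c\neq 0)$, which rearranges into the second equality.

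Second, to derive the explicit M\"obius sum, I would use M\"obius inversion on the subfield lattice. Let $f(k)$ denote the number of $\alpha \in \mathbb{F}_{q^k} \subseteq \Fr$ with $\Tmq(\alpha) = c$ (with no restriction on the exact degree of $\alpha$), and let $g(k)$ be the analogous count with $\alpha$ of exact degree $k$ over $\F$. Grouping elements by their exact degree gives $f(m) = \sum_{d\mid m} g(d)$, which inverts to
\[
m\cdot I_{q,m}(c) \;=\; g(m) \;=\; \sum_{d\mid m}\mu(d)\,f(m/d).
\]
The computation of $f(m/d)$ uses transitivity of the trace: for $\alpha \in \mathbb{F}_{q^{m/d}}$, one has $\Tmq(\alpha) = d\cdot \operatorname{Tr}_{\mathbb{F}_{q^{m/d}}/\F}(\alpha)$, so the condition $\Tmq(\alpha) = c$ depends crucially on whether $p\mid d$.

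Splitting the divisors accordingly finishes the calculation. When $p\mid d$, the factor $d$ vanishes in characteristic $p$, so the condition forces $c=0$; for $c\neq 0$ these divisors contribute nothing. When $p\nmid d$, $d$ is invertible in $\F$ and the condition becomes $\operatorname{Tr}_{\mathbb{F}_{q^{m/d}}/\F}(\alpha) = c/d$, satisfied by exactly $q^{m/d-1}$ elements by the surjectivity of the relative trace (its kernel has $\F$-codimension $1$). Substituting yields, for $c\neq 0$,
\[
m\cdot I_{q,m}(c) \;=\; \sum_{\substack{d\mid m\\ p\nmid d}}\mu(d)\,q^{m/d-1} \;=\; \frac{1}{q}\sum_{\substack{d\mid m\\ p\nmid d}}\mu(d)\,q^{m/d},
\]
which rearranges to the claimed identity. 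The only subtle step, and the one requiring care, is the characteristic-$p$ dichotomy above: it is precisely this split that explains why the unrestricted M\"obius sum appears in the total $I_{q,m}$ while the sum in $I_{q,m}(c\neq 0)$ is restricted to $p\nmid d$.
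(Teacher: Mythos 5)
Your argument is correct and complete. The paper itself offers no proof of this statement -- it is quoted as a known result of Carlitz with a citation -- so there is nothing internal to compare against; your two-step argument (uniformity of $I_{q,m}(c)$ over $c\in\F^*$ via the scaling bijection $\alpha\mapsto a\alpha$, then M\"obius inversion over the subfield lattice combined with the identity $\Tmq(\alpha)=d\cdot\operatorname{Tr}_{\mathbb{F}_{q^{m/d}}/\F}(\alpha)$ for $\alpha\in\mathbb{F}_{q^{m/d}}$) is the standard elementary proof and all steps check out. The only point worth making explicit is the convention issue: the ``trace'' of a monic irreducible is the coefficient of $x^{m-1}$, which is the negative of the sum of the roots, but since $c\mapsto -c$ permutes $\F^*$ and fixes $0$, this does not affect any of the counts.
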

Note that 
\begin{equation}\label{eqn:carlitz}
I_{q,m}(c)  = \dfrac{I_{q,m} - I_{q,m}(0)}{q-1},
\end{equation} 
is a constant for any $c \in \F^*$, and so $I_{q,m}(c)$ is said to be {\em uniformly distributed} for $c \in \F^*$. 
One of the results of this paper concerns an analogy to (\ref{eqn:carlitz}) for primitive polynomials in some special cases. We will return to this concept later.

A monic irreducible polynomial of degree $m$ over $\F$ is called
{\em primitive} if it has a primitive element of $\mathbb{F}_{q^m}$ as one of its roots.
There is a correspondence between the primitive elements in $\mathbb{F}_{q^m}$  and the primitive polynomials of degree $m$ over $\F$. 
In fact the number of primitive elements in $\mathbb{F}_{q^m}$ is $m$ times the number of primitive polynomials of degree $m$ over $\F$. 
In the case of the primitive polynomials of degree $m$, or equivalently of primitive elements in $\mathbb{F}_{q^m}$, things are more complicated. 
Most of the work on primitive polynomials with prescribed coefficients focus on the asymptotic analysis for their number and existence. 
For example, the following existence result was first due to Cohen (see also \cite{cohen and presern (2005)} for a more self-contained proof). 
Denote with $\operatorname{Tr}_{\mathbb{F}_{q^m}/\F}$ the trace function from $\mathbb{F}_{q^m}$ onto $\F$. 

\begin{thm}[{\bf Cohen (1990)}]\label{thm: cohen}
 Let $q$ be a power of a prime and $m > 1$ be a positive integer, and let $c$ be an arbitrary element in $\F$. If $m = 2$ or $(q,m) = (4,3)$, further assume that $c \neq 0$. 
 Then there exists a primitive element $\xi$ of $\mathbb{F}_{q^m}$ with $\Tmq(\xi) = c$.
\end{thm}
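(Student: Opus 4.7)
The plan is to count $P_{q,m}(c)$, the number of primitive elements $\xi \in \Fr$ with $\Tmq(\xi)=c$, by combining the standard indicator functions for ``primitive'' and ``trace $=c$'' into an iterated character sum, and to show the count is strictly positive outside the listed exceptions. For the primitivity indicator I would use the classical Vinogradov-style identity
\[
\mathbf{1}_{\xi\text{ primitive}}=\frac{\phi(q^m-1)}{q^m-1}\sum_{d\mid q^m-1}\frac{\mu(d)}{\phi(d)}\sum_{\ord(\chi)=d}\chi(\xi),
\]
and for the trace condition the additive character identity
\[
\mathbf{1}_{\Tmq(\xi)=c}=\frac{1}{q}\sum_{a\in\F}\psi\bigl(a(\Tmq(\xi)-c)\bigr),
\]
where $\psi$ is the canonical additive character of $\F$. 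Multiplying, summing over $\xi\in\Fr^*$, and interchanging the orders of summation leads to an expression whose inner sums $\sum_{\xi\in\Fr^*}\chi(\xi)\psi(a\Tmq(\xi))$ are Gauss-type sums over $\Fr$.

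I would then isolate the main term, coming from $d=1$, whose total contribution equals $\frac{\phi(q^m-1)}{q^m-1}\cdot\#\{\xi\in\Fr^*:\Tmq(\xi)=c\}$, an amount of order $\phi(q^m-1)q^{m-1}/(q^m-1)$. For $d>1$, the $a=0$ terms vanish by orthogonality of multiplicative characters, while for $a\neq 0$ the map $\xi\mapsto\psi(a\Tmq(\xi))$ is a nontrivial additive character of $\Fr$, so each inner sum is a classical Gauss sum of modulus $q^{m/2}$. Summing over the $\phi(d)$ characters of each squarefree order $d\mid q^m-1$ and the $q-1$ values $a\in\F^*$, the total error is bounded in absolute value by $\frac{\phi(q^m-1)}{q(q^m-1)}(q-1)q^{m/2}\,2^{\omega(q^m-1)}$, and comparing with the main term reduces the existence statement to an inequality of the shape $q^{m/2-1}>C\cdot 2^{\omega(q^m-1)}$, which holds for all but finitely many pairs $(q,m)$.

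The main obstacle is the residual range where this crude estimate fails. Here I would apply a Cohen-style sieve, which replaces the full M\"obius sum over divisors of $q^m-1$ by a leaner sum involving only the distinct prime divisors of $q^m-1$ (together with a single free parameter), sharpening the error sufficiently to cut the open cases down to a short finite list; those remaining triples $(q,m,c)$ would be dispatched by direct enumeration in $\Fr$.

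Finally, the two genuine exceptions admit a transparent explanation. For $m=2$, any $\xi\in\Fr^*$ with $\Tmq(\xi)=0$ satisfies $\xi^q=-\xi$, hence $\xi^{q-1}=-1$ and $\ord(\xi)\mid 2(q-1)<q^2-1$ whenever $q\geq 2$, so no primitive element can have trace zero. For $(q,m)=(4,3)$, a direct enumeration of the $\phi(63)=36$ primitive elements of $\mathbb{F}_{64}$ shows that none has trace zero over $\mathbb{F}_4$.
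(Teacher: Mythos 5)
This theorem is quoted in the paper as an external result of Cohen (1990); the paper itself gives no proof, only references \cite{cohen0} and \cite{cohen and presern (2005)}. Your outline is exactly the standard argument used in those sources: Vinogradov's indicator for primitivity times the additive-character indicator for the trace, isolation of the main term $\frac{\phi(q^m-1)}{q(q^m-1)}q^m$ (for $c\neq 0$), the Gauss-sum bound $q^{m/2}$ giving an error of size roughly $(q-1)q^{m/2}2^{\omega(q^m-1)}$, a Cohen-type sieve over the prime divisors of $q^m-1$ for the range where the crude inequality fails, and direct computation for the residual finite list; your explanations of the two exceptions ($\xi^{q-1}=\pm1$ forces $\ord(\xi)\mid 2(q-1)<q^2-1$ when $m=2$, and enumeration of the $36$ primitive elements of $\mathbb{F}_{64}$ for $(q,m)=(4,3)$) are also correct. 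The one caveat is that the sieve step and the finite verification are only gestured at, and they constitute the bulk of the actual work in Cohen's and Cohen--Pre\u{s}ern's papers, so as written this is a faithful plan of the known proof rather than a self-contained one.
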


One can see Section 4.2 by S. D. Cohen and the references therein, in the Handbook of Finite Fields \cite{handbook}. 
In fact, except for the trivial cases and those when all the primitive polynomials of degree $m$ are all the irreducibles of degree $m$
(i.e., when $q = 2$ and $m = \ell$ with $2^\ell-1$ a (Mersenne) prime) no explicit formulas are known to date.
In particular an analogue, for primitives, to the formula (\ref{eqn:carlitz}) due to Carlitz is unknown, including in any specific non-trivial case of $q,m$. 
The results of this paper are written in terms of the primitive elements in $\mathbb{F}_{q^m}$;
so we follow this convention from now on. Let $P_{q, m}$ be the number of primitive elements in $\mathbb{F}_{q^m}$. 
It is known that $P_{q, m} = \phi(q^m-1)$, where $\phi$ is the Euler function. For $c\in \F$, let $P_{q, m}(c)$ denote the number of primitive elements in $\mathbb{F}_{q^m}$ with trace $c$. 
Then as a corollary of Theorem~\ref{thm: uniformity} of this paper, the following result (analogous to (\ref{eqn:carlitz})) is proved. For a positive integer $n $, 
let us denote with $\rad(n)$ the product of all the distinct prime factors of $n$. By convention $\rad(1) = 1$.

\medskip
\noindent {\bf Corollary of Theorem~\ref{thm: uniformity}} Let $q$ be a prime power and $m$ be a multiple of $\rad(q-1)$. Then, for $c\neq 0$, we have 
\begin{equation*}
P_{q,m}(c)  = \dfrac{P_{q,m} - P_{q,m}(0)}{q-1},
\end{equation*}

Although the formula above corresponds to primitive elements and hence primitive polynomials, we will however consider, in the sections that follow, the more general concept of 
an element of $\mathbb{F}_{q^m}$ being $N$-free, for a positive divisor $N$
of $q^m-1$. Let us first fix the following notations and definitions. 

{\bf Notations:}
In what follows we let $q = p^s$ be a power of a prime number $p$, $m$ be a positive integer, $Q = (q^m-1)/(q-1)$, and  $\alpha$ be a primitive element of $\Fr$.
For a positive divisor $N$ of $q^m-1$, we say that a non-zero element $\xi \in \Fr^*$ is {\em $N$-free} 
if, for any $d \mid N$, $\xi = \gamma^d$, $\gamma \in \Fr$, implies $d = 1$. Equivalently, $\xi$ is $N$-free if and only if $\xi = \alpha^k$ for some integer $k$ that is coprime to $N$.
Note that the definition of $N$-free is independent of the choice of the primitive element $\alpha$. Furthermore, 
for an element $c \in \F$, we denote with $Z_{q, m,N}(c)$ the number of $N$-free elements $\xi$ in $\Fr^*$ such that $\Tmq(\xi) = c$. Moreover we let $P_{q,m,N}(c)$ be the number of non-zero elements $\zeta$ in $\Fr^*$ with multiplicative order $N$ and satisfying $\Tmq(\zeta) = c$.
In particular we note that
 \begin{equation*}
 Z_{q,m,q^m-1}(c)= P_{q,m,q^m-1}(c) = P_{q, m}(c)
 \end{equation*}
 is the number of primitive elements $\xi$ in $\Fr$ such that $\Tmq(\xi) = c$. For an integer $k$ and $N \mid q^m-1$, denote
$$
\Delta_k(N) := \sum_{d \mid N} \mu(d) \eta_k^{(d, q^m)},
$$
where in the sum $\eta_k^{(d,q^m)}$ is the $k$-th Gaussian period of type $(d,q^m)$ (we refer the reader to Section 2 for its definition).
Note that the value of $\Delta_k(N)$ depends only on the square-free part of $N$.  

Previously Cohen and Pre\u{s}ern \cite{cohen and presern (2005)} derived a formula for $Z_{q,m,N}(c)$ in terms of Gaussian sums (see Lemma 2.2 there). From this they were 
able to obtain lower bounds,  through various assisting sieving inequalities,  thus proving Theorem \ref{thm: cohen}  in a more self-contained fashion than previously done in 
\cite{cohen0}. However as it was perhaps beyond the scope of their work, and except for their Corollary 2.3 where they give an explicit formula for $Z_{q,m,N}(c)$ in a few special cases of 
Corollary  \ref{thm: semi delta}  and Corollary \ref{cor: semidelta uniform} below, 
their results were mainly constrained to lower bounds and existence results. It is interesting to note that in the case of trace zero, as they showed in their Lemma 2.1, 
there is the connection between primitives with trace zero and $Q$-free elements with trace zero: $ Z_{q,m,q^m-1}(0) = \Theta(K) Z_{q,m,Q}(0)$. Here $K$ is the part of $q^m-1$ that is coprime to $Q$, and
$\Theta(K) = \phi(K)/K$ is the proportion of primitive $K$-th roots of unity among the $K$-th roots. 
But more generally, as we show here thorough our calculations, a lemma due to Ding and Yang \cite{ding} (see Lemma \ref{lem: ding} here) 
implies that something similar holds in general for any divisor $N$ of $q^m-1$: 
$Z_{q,m,N} (0) = \Theta(K) Z_{q,m, \gcd(Q, N)}(0)$, where $K$ is now the part of $N$ that is coprime to $Q$. 
See the following theorem, proved later in Section 4.1.

\begin{thm}\label{lem: zero trace}
Let $q$ be a power of a prime and $m$ be a positive integer. Let $N \mid q^m-1$ and $K_Q$ be the largest divisor of $N$ that is coprime to $Q = (q^m-1)/(q-1)$. Then
 $$
  Z_{q,m,N}(0) = \dfrac{(q-1)\phi(K_Q)}{qK_Q} \left( \dfrac{Q}{\gcd(Q, N)} \phi(\gcd(Q, N)) + \Delta_0(\gcd(Q, N)) \right). 
 $$
\end{thm}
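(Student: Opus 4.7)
The plan is to combine a reduction lemma due to Ding and Yang (Lemma \ref{lem: ding}) with a direct M\"obius / Gaussian period computation in the case where every prime factor of $N$ divides $Q$. First I observe that $N$-freeness depends only on $\rad(N)$, while on the right-hand side the ratios $\phi(K_Q)/K_Q$ and $\phi(\gcd(Q,N))/\gcd(Q,N)$ depend only on prime supports and $\Delta_0$ is supported on the squarefree part of its argument; hence it suffices to treat the case where $N$ is squarefree. Writing $N = K_Q\,M$ with $M := \gcd(Q, N)$ and $K_Q = N/M$ coprime to $Q$, Lemma \ref{lem: ding} then yields the factorisation
$$
Z_{q,m,N}(0) \;=\; \dfrac{\phi(K_Q)}{K_Q}\, Z_{q,m,M}(0),
$$
reducing the problem to computing $Z_{q,m,M}(0)$ in the case $M \mid Q$.

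For that case I would use the standard M\"obius expansion of the $M$-free indicator, namely $\sum_{d \mid M}\mu(d)\,\mathbf{1}_{C_d}$, where $C_d := (\Fr^*)^d$ is the group of $d$-th powers. Expanding the trace-zero indicator through the canonical additive character $\chi$ of $\Fr$ then gives
$$
Z_{q,m,M}(0) \;=\; \sum_{d \mid M}\mu(d)\,N_d, \qquad N_d \;=\; \dfrac{1}{q}\sum_{a \in \F}\sum_{\xi \in C_d}\chi(a\xi).
$$
The key observation is that for $d \mid M \mid Q$ every element of $\F^*$ is itself a $d$-th power, since $\alpha^Q$ generates $\F^*$ and $\alpha^Q = (\alpha^{Q/d})^d$; consequently multiplication by $a \in \F^*$ permutes $C_d$ and the inner sum equals the Gaussian period $\eta_0^{(d,q^m)}$ independently of $a$. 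Isolating $a = 0$ gives $N_d = \frac{1}{q}\bigl((q^m-1)/d + (q-1)\,\eta_0^{(d,q^m)}\bigr)$. Summing over $d \mid M$ and using $q^m-1 = (q-1)Q$ together with $\sum_{d \mid M}\mu(d)/d = \phi(M)/M$ yields
$$
Z_{q,m,M}(0) \;=\; \dfrac{q-1}{q}\left(\dfrac{Q\,\phi(M)}{M} + \Delta_0(M)\right),
$$
which, plugged into the factorisation above, gives precisely the claimed identity.

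The main difficulty I anticipate is not any sharp estimate but the correct deployment of Lemma \ref{lem: ding}: that lemma is what produces the factor $\phi(K_Q)/K_Q$ and collapses the $N$-dependence into a $\gcd(Q,N)$-dependence, and I would expect that applying it cleanly for arbitrary (possibly non-squarefree) $N$ requires the preliminary reduction mentioned above. Once that reduction is in hand, the one computational input that makes the rest of the proof run is the inclusion $\F^* \subseteq C_d$ for $d \mid Q$, which lets the sum over $a \in \F^*$ coalesce into a single Gaussian period; everything else is routine.
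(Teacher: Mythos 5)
Your proposal is correct, but it reaches the formula by a genuinely different route than the paper. The paper first derives a general character-sum formula for $Z_{q,m,N}(c)$ (Lemma \ref{lem: Z formula}), then simplifies the resulting quantity $f_0(N,0,\Delta)$ using the period-sum consequence of Ding--Yang (Lemma \ref{lem: ding2}) by splitting the divisor sum over $d\mid N$ into the parts $y\mid K_Q$ and $z\mid\gcd(Q,N)$, and finally invokes Euler's product formula; the whole computation is carried out at once inside the character sums. You instead factor the \emph{count} first: after reducing to squarefree $N$ (a legitimate reduction, since both sides of the identity depend only on $\rad(N)$), you use the multiset identity of Lemma \ref{lem: ding} to pull out the factor $\phi(K_Q)/K_Q$ and reduce to $M:=\gcd(Q,N)\mid Q$, and only then do a character computation, which collapses because $\F^*=\langle\alpha^Q\rangle\subseteq\langle\alpha^d\rangle$ for $d\mid Q$, so that $\sum_{\xi\in C_d}\chi_{q^m}(a\xi)=\eta_0^{(d,q^m)}$ for every $a\in\F^*$. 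This is arguably more transparent than the paper's argument and isolates the two real inputs (the Ding--Yang reduction and the inclusion $\F^*\subseteq C_d$). The one step you leave underspecified is the factorisation $Z_{q,m,N}(0)=\frac{\phi(K_Q)}{K_Q}Z_{q,m,M}(0)$: Lemma \ref{lem: ding} does yield it, but only after two small observations that should be recorded, namely that the trace-zero set $T_0$ is stable under multiplication by $\F^*$ (so the multiset identity applied to $T_0$ gives $|C_k^{(N,q^m)}\cap T_0|=\frac{M}{N}|C_k^{(M,q^m)}\cap T_0|$ for each $k$), and a CRT count showing that each residue $j\bmod M$ with $\gcd(j,M)=1$ arises from exactly $\phi(K_Q)$ residues $k\bmod N$ with $\gcd(k,N)=1$. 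With those two lines added the argument is complete and matches the paper's statement exactly.
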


Note that obtaining the value of $Z_{q,m, N}(0)$ boils down to computing $\Delta_0(\gcd(Q,N))$. Since Gaussian sums and hence periods are known in only very few cases, obtaining images of 
$\Delta_0$ may be quite hard in general.
But by using known results on periods we can clearly obtain some explicit expressions. For instance
we obtain the following two direct consequences when $\gcd(Q, N)=1$. 

\begin{cor}\label{thm: N coprime to Q}
Let $q$ be a power of a prime, let $m \in \mathbb{N}$ and let $N \mid q-1$ such that $N$ is coprime to $(q^m-1)/(q-1)$.
Then the number of $N$-free elements $\xi \in \mathbb{F}_{q^m}$ with $\operatorname{Tr}_{\mathbb{F}_{q^m}/\F}(\xi) = 0$ is given by 
  $$
 Z_{q,m,N}(0) = \dfrac{\phi(N)}{N} \left( q^{m-1} - 1 \right).
  $$
 \end{cor}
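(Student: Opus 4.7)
The plan is to derive this as a direct specialization of Theorem \ref{lem: zero trace}. Under the hypothesis $N \mid q-1$ with $\gcd(N, Q) = 1$, every prime factor of $N$ is coprime to $Q$, so in the notation of Theorem \ref{lem: zero trace} we have $K_Q = N$ and $\gcd(Q, N) = 1$. Substituting these values into the formula of Theorem \ref{lem: zero trace} reduces the problem to
$$
Z_{q,m,N}(0) = \frac{(q-1)\phi(N)}{qN}\bigl( Q\,\phi(1) + \Delta_0(1)\bigr).
$$

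The key step is therefore to identify $\Delta_0(1)$. By definition, $\Delta_0(1) = \sum_{d \mid 1} \mu(d)\,\eta_0^{(d,q^m)} = \eta_0^{(1, q^m)}$, which is the $0$-th Gaussian period of type $(1, q^m)$. This period is the sum of the canonical additive character of $\mathbb{F}_{q^m}$ over the trivial coset $\mathbb{F}_{q^m}^*$ (the unique coset of index $1$), and since the full sum over $\mathbb{F}_{q^m}$ vanishes, we obtain $\eta_0^{(1,q^m)} = -1$. Thus $\Delta_0(1) = -1$.

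Plugging back in and using $\phi(1) = 1$, the parenthesized quantity becomes $Q - 1 = \frac{q^m-1}{q-1} - 1 = \frac{q(q^{m-1}-1)}{q-1}$. The factors of $q$ and $q-1$ then cancel cleanly against the prefactor $\frac{(q-1)\phi(N)}{qN}$, leaving exactly $\frac{\phi(N)}{N}(q^{m-1}-1)$, as claimed.

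The only subtlety I foresee is the correct identification of $\eta_0^{(1,q^m)}$, which depends on the precise convention for Gaussian periods fixed in Section 2 of the paper; assuming the standard convention (sum of the canonical additive character over the cyclotomic coset $\alpha^{\mathbb{Z}}$ of index $d$, with $k=0$ and $d=1$ giving all of $\mathbb{F}_{q^m}^*$), the value $-1$ is immediate and no further computation is required. The corollary is thus a clean corollary of Theorem \ref{lem: zero trace} via a one-line evaluation of the Möbius-Gaussian combination at the trivial divisor.
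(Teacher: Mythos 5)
Your proposal is correct and matches the paper's own proof, which likewise derives the corollary directly from Theorem \ref{lem: zero trace} by noting that $\gcd(Q,N)=1$, $K_Q=N$, and $\Delta_0(1)=\eta_0^{(1,q^m)}=-1$ (the latter being recorded explicitly in Section~2). Your algebraic simplification of $\frac{(q-1)\phi(N)}{qN}(Q-1)$ to $\frac{\phi(N)}{N}(q^{m-1}-1)$ is exactly what is needed.
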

 
 Setting $N = 1$ above one obtains the well-known number, $q^{m-1}-1 $, of non-zero elements lying in the kernel of the trace map. 

\begin{cor}\label{thm: Q prime with trace 0}
  Let $q$ be a power of a prime $p$ and assume that $Q = (q^{\ell}-1)/(q-1)$ is prime for some prime $\ell$. Then the number of primitive elements $\xi \in \mathbb{F}_{q^{\ell}}$ 
  satisfying $\operatorname{Tr}_{\mathbb{F}_{q^{\ell}}/\F}(\xi) = 0$ is given by
  $$
  Z_{q,\ell,q^{\ell}-1}(0) = \begin{cases}
                    \phi(q^{\ell}-1)/q & \mbox{ if } \ell \neq p;\\
                    \phi(q^{\ell}-1)/q - \phi(q-1)  & \mbox{ otherwise.}
                   \end{cases}
$$
 \end{cor}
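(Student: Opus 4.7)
\smallskip
\noindent\textbf{Proof proposal.} The plan is to specialize Theorem~\ref{lem: zero trace} to $m=\ell$ and $N=q^{\ell}-1$, and then evaluate the only nontrivial piece, $\Delta_0(Q)$, by hand using the fact that the index-$Q$ subgroup of $\Fl^*$ is precisely $\F^*$.

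First I would pin down the constants entering Theorem~\ref{lem: zero trace}. Using $Q=1+q+\cdots+q^{\ell-1}\equiv\ell\pmod{q-1}$ gives $\gcd(Q,q-1)=\gcd(\ell,q-1)$. If this gcd were larger than $1$, then since $\ell$ is prime we would have $\ell\mid Q$, forcing $Q=\ell$ by primality of $Q$; but $Q\ge 2^{\ell}-1>\ell$, a contradiction. Hence $\gcd(Q,q-1)=1$, so $q^{\ell}-1=Q(q-1)$ with coprime factors, giving $K_Q=q-1$ and $\gcd(Q,q^{\ell}-1)=Q$. Substituting into Theorem~\ref{lem: zero trace} and using $\phi(q-1)\phi(Q)=\phi(q^{\ell}-1)$ yields
$$
Z_{q,\ell,q^{\ell}-1}(0)=\frac{\phi(q-1)}{q}\bigl(\phi(Q)+\Delta_0(Q)\bigr)=\frac{\phi(q^{\ell}-1)}{q}+\frac{\phi(q-1)}{q}\Delta_0(Q).
$$

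Next I would compute $\Delta_0(Q)$. Since $Q$ is prime, only two terms survive, namely $\Delta_0(Q)=\eta_0^{(1,q^{\ell})}-\eta_0^{(Q,q^{\ell})}$. The first period is the sum of the canonical additive character $\chi$ of $\Fl$ over $\Fl^*$, hence equals $-1$. For the second, note $\alpha^Q$ has order $(q^{\ell}-1)/Q=q-1$, so $\langle\alpha^Q\rangle=\F^*$; thus $\eta_0^{(Q,q^{\ell})}=\sum_{y\in\F^*}\chi(y)$. By transitivity of trace, $\chi(y)=\zeta_p^{\Tqp(\ell y)}$ for $y\in\F$. If $\ell\ne p$ then $\ell$ is a unit in $\mathbb{F}_p$, the map $y\mapsto\zeta_p^{\Tqp(\ell y)}$ is a nontrivial additive character of $\F$, so its sum over $\F$ vanishes and $\eta_0^{(Q,q^{\ell})}=-1$, giving $\Delta_0(Q)=0$. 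If $\ell=p$, then $\ell y=0$ in $\F$, so the character restricts trivially and $\eta_0^{(Q,q^{\ell})}=q-1$, giving $\Delta_0(Q)=-q$.

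Plugging the two cases back into the displayed formula yields exactly $\phi(q^{\ell}-1)/q$ when $\ell\ne p$ and $\phi(q^{\ell}-1)/q-\phi(q-1)$ when $\ell=p$, matching the stated formula. The one step requiring care is the case split in evaluating $\sum_{y\in\F^*}\chi(y)$: one must recognize that the canonical character of $\Fl$ restricted to $\F$ is the canonical character of $\F$ composed with multiplication by $\ell\in\mathbb{F}_p$, which is trivial precisely when $\ell=p$. Everything else is bookkeeping with $\phi$ and with the identification $\langle\alpha^Q\rangle=\F^*$.
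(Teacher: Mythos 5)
Your proposal is correct and follows essentially the same route as the paper: it specializes the zero-trace formula (the paper uses its Lemma~\ref{lem: primitive zero trace}, the $N=q^m-1$ case of Theorem~\ref{lem: zero trace}, after noting $\gcd(Q,q-1)=1$), and evaluates $\Delta_0(Q)=-1-\eta_0^{(Q,q^{\ell})}$ by identifying $\langle\alpha^Q\rangle=\F^*$ and splitting on whether $\ell=p$, which is exactly the content of the paper's Lemmas~\ref{lem: delta Q prime} and~\ref{lem: period Q}. No gaps.
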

 Since quadratic and cubic Gaussian periods are known as well, we also immediately obtain Corollaries \ref{thm: N = 2} and \ref{thm: N = 3}.
 These two correspond to the cases when $\gcd(Q, N)$ is a power of 2 and 3, respectively.

 \begin{cor}\label{thm: N = 2}
  Let $q = p^s$, $Q = (q^m-1)/(q-1)$,  $N \mid q^m-1$ such that $\gcd(Q, N) = 2^n$ for some $n \geq 1$, and $K_Q$ be the largest odd divisor of $N$. Then
  $$
  \dfrac{2 q K_Q }{(q-1)\phi(K_Q)}  Z_{q,m,N}(0) = \begin{cases}
                   Q - 1 + (-1)^{sm}q^{m/2}   & \mbox{ if } p \equiv 1 \pmod{4};\\
                  Q - 1 +  \left(- \sqrt{-1} \right)^{sm}q^{m/2}   & \mbox{ if } p \equiv 3 \pmod{4}.
                 \end{cases}
$$
 \end{cor}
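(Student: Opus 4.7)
The plan is to specialize Theorem~\ref{lem: zero trace} to the situation $\gcd(Q,N)=2^{n}$. Since $\phi(2^{n})/2^{n}=1/2$, the first term inside the parentheses of that theorem becomes $Q/2$, and multiplying through by $\tfrac{2qK_Q}{(q-1)\phi(K_Q)}$ gives
\[
\frac{2qK_Q}{(q-1)\phi(K_Q)}\,Z_{q,m,N}(0) \;=\; Q+2\Delta_0(2^{n}).
\]
Thus the entire statement reduces to a closed-form evaluation of $\Delta_0(2^{n})$.

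For this, I would invoke the remark made immediately after the definition of $\Delta_k(N)$: since $\mu$ vanishes on non-squarefree integers, $\Delta_0(2^{n})$ depends only on $\rad(2^{n})=2$, so
\[
\Delta_0(2^{n}) \;=\; \eta_0^{(1,q^{m})}-\eta_0^{(2,q^{m})}.
\]
Let $\psi$ denote the canonical additive character of $\Fr$. The first period equals $\sum_{x\in\Fr^{*}}\psi(x)=-1$. For the second, writing the indicator of $(\Fr^{*})^{2}$ as $\tfrac12(1+\chi_{2}(x))$ with $\chi_{2}$ the quadratic multiplicative character of $\Fr$, one gets $\eta_0^{(2,q^{m})}=(g_{2}-1)/2$, where $g_{2}=\sum_{x\in\Fr^{*}}\chi_{2}(x)\psi(x)$ is the quadratic Gauss sum over $\Fr$. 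Combining yields
\[
\Delta_0(2^{n}) \;=\; -\frac{g_{2}+1}{2}, \qquad \text{so that} \qquad Q+2\Delta_0(2^{n}) \;=\; Q-1-g_{2}.
\]

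The last step is to plug in the classical explicit evaluation of $g_{2}$ over a finite field of characteristic $p$ and order $q^{m}=p^{sm}$, namely
\[
g_{2}=\begin{cases}(-1)^{sm-1}q^{m/2} & \text{if } p\equiv 1\pmod 4,\\ (-1)^{sm-1}i^{\,sm}q^{m/2} & \text{if } p\equiv 3\pmod 4.\end{cases}
\]
In the first case $-g_{2}=(-1)^{sm}q^{m/2}$, and in the second case $-g_{2}=(-1)^{sm}i^{\,sm}q^{m/2}=(-i)^{sm}q^{m/2}=(-\sqrt{-1})^{sm}q^{m/2}$, which gives precisely the two branches of the corollary.

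The only real obstacle is the bookkeeping of signs and powers of $i$ in the classical Gauss-sum evaluation, which is a standard exercise once the correct reference (e.g.\ Lidl--Niederreiter, Thm.\ 5.15) is cited. Everything else is a direct substitution into Theorem~\ref{lem: zero trace}. Notice also that the hypothesis $2\mid Q$ forces $p$ to be odd (since $Q$ is odd whenever $q$ is even), so splitting into $p\equiv 1,3\pmod 4$ is exhaustive.
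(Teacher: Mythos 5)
Your proof is correct and follows essentially the same route as the paper, which deduces the corollary directly from Theorem~\ref{lem: zero trace} together with Lemma~\ref{lem: period N = 2}. The only cosmetic difference is that you re-derive the value of $\eta_0^{(2,q^m)}$ from the classical quadratic Gauss sum evaluation instead of quoting Lemma~\ref{lem: period N = 2}, which is precisely that evaluation packaged as a Gaussian period.
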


 \begin{cor}\label{thm: N = 3}
   Let $p \equiv 1 \pmod{3}$, 
   let $q = p^s$,  $Q = (q^m-1)/(q-1)$, $N \mid q^m-1$ such that $\gcd(N,Q) = 3^n$ for some $n \geq 1$, and $K_Q$ be the 
   largest divisor of $N$ with $3 \nmid K_Q$.
   Let $c \in \mathbb{Z}$ with $c \equiv 1 \pmod{3}$ and $p \nmid c$ be a solution to the equation
   $4q^{m/3} = c^2 + 27d^2$ with $d \in \mathbb{Z}$.  
   Then
   $$
    Z_{q,m,N}(0) = \dfrac{(q-1)\phi(K_Q)}{qK_Q} \left( \dfrac{2Q - 2 - cq^{m/3}}{3}  \right).
   $$
   \end{cor}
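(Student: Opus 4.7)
The plan is to apply Theorem \ref{lem: zero trace} directly and evaluate the resulting period sum via the classical formula for cubic Gaussian periods. Substituting $\gcd(Q,N) = 3^n$ and $\phi(3^n) = 2 \cdot 3^{n-1}$ collapses the first term inside the parenthesis to $\tfrac{Q}{3^n}\phi(3^n) = \tfrac{2Q}{3}$. Observe also that $3 \mid \gcd(Q,N)$ forces $3 \mid Q$; combined with $q \equiv 1 \pmod 3$ (from $p \equiv 1 \pmod 3$) and $Q \equiv m \pmod 3$, this gives $3 \mid m$, so $q^{m/3} = p^{sm/3}$ is a genuine prime power and the Diophantine equation $4q^{m/3} = c^2 + 27d^2$ is meaningful.

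Since $\Delta_0$ depends only on the squarefree part of its argument, $\Delta_0(3^n) = \Delta_0(3) = -1 - \eta_0^{(3,q^m)}$, using $\eta_0^{(1,q^m)} = -1$ as the sum of the canonical additive character over $\Fr^*$. The crux is evaluating $\eta_0^{(3,q^m)}$. Writing $\eta_0^{(3,q^m)} = \tfrac{1}{3}\bigl(-1 + G(\chi) + G(\overline{\chi})\bigr)$ for a cubic character $\chi$ of $\Fr^*$, and noting that such $\chi$ factors through $\mathbb{F}_p$ via the norm when $p \equiv 1 \pmod 3$, the Davenport-Hasse relation together with $G_p(\chi_p)^3 = p\, J(\chi_p,\chi_p) = p\pi$ (for a prime $\pi \in \mathbb{Z}[\omega]$ above $p$, $\omega = e^{2\pi i/3}$) and $3 \mid sm$ yields $G(\chi) = (-1)^{sm-1}p^{sm/3}\pi^{sm/3} \in \mathbb{Z}[\omega]$. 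In particular $G(\chi) + G(\overline{\chi}) \in \mathbb{Z}[\omega] \cap \mathbb{R} = \mathbb{Z}$.

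Writing $\pi^{sm/3} = A + B\omega$, the norm equation $A^2 - AB + B^2 = p^{sm/3} \equiv 1 \pmod 3$ precludes $A, B, A-B$ all being nonzero modulo $3$; consequently at least one of the six associates $\pm\omega^j\pi^{sm/3}$, whose $\omega$-coefficients are $\pm A, \pm B, \pm(A-B)$, has coefficient of $\omega$ divisible by $3$. Select such an associate $a + 3e\,\omega$, so that $(2a - 3e)^2 + 27e^2 = 4q^{m/3}$. Taking real parts produces $G(\chi) + G(\overline{\chi}) = c\, q^{m/3}$ where $c = \pm(2a - 3e)$; the convention $c \equiv 1 \pmod 3$ fixes the sign, and the condition $p \nmid c$ excludes spurious representations of $4q^{m/3}$ coming from decompositions $\alpha = p^j \pi^{sm/3-2j}$ with $j \geq 1$. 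One concludes
$$
\eta_0^{(3,q^m)} = \frac{-1 + c\,q^{m/3}}{3},
$$
and substituting back gives $\Delta_0(3^n) = \tfrac{-2 - cq^{m/3}}{3}$, which combines with the $\tfrac{2Q}{3}$ term to produce the claimed formula. The main obstacle is the careful bookkeeping required to track the Davenport-Hasse sign $(-1)^{sm-1}$ and to select the associate realising both $c \equiv 1 \pmod 3$ and $p \nmid c$; this is standard Gauss cubic-period theory and poses no essential conceptual difficulty.
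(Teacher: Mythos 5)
Your proposal is correct and follows essentially the same route as the paper: apply Theorem~\ref{lem: zero trace}, note $\tfrac{Q}{3^n}\phi(3^n)=\tfrac{2Q}{3}$ and $\Delta_0(3^n)=-1-\eta_0^{(3,q^m)}$, and invoke the classical evaluation of the cubic Gaussian period (the paper's Lemma~\ref{lem: period N = 3}). The only difference is that you re-derive that lemma via Davenport--Hasse and arithmetic in $\mathbb{Z}[\omega]$, whereas the paper simply cites it as known.
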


Thanks in part to well-known explicit expressions for the Gaussian periods in the so called semi-primitive case (see Lemma \ref{lem: semiprimitive}), we obtain the following result. 

\begin{cor}\label{thm: semi delta}
    Let $sm$ be even with $m > 1$, $q = p^s$ be a power of a prime $p$, and $Q = (q^m-1)/(q-1)$. Let $N \mid q^m-1$ be such that $n := \gcd(Q, N) > 1$
    is not a power of $2$. Further assume there exists a positive integer $j$ such that $p^j \equiv -1 \pmod{\ell}$ for every prime divisor $\ell \geq 3$ of $n$, and that $j$ is the least such. 
    Define $\gamma = sm/2j$.
    Let $K_Q$ be the part of $N$ that is coprime to $Q$. Let $\eta_0^{(2,q^m)}$ be as in Lemma \ref{lem: period N = 2}. Then the number of $N$-free elements $\xi \in \Fr$ with $\Tmq(\xi) = 0$ is given by 
    $$
    Z_{q,m, N}(0) = \dfrac{(q-1)\phi(K_Q)}{qK_Q} \left( \dfrac{Q}{n} \phi(n) + \Delta_0(n) \right),
    $$
    where the value of $\Delta_0(n)$ is given in what follows.
    
    (a) If $\gamma$ and $p$ are odd, $n$ is even and $(p^j+1)/2$ is odd, then
    $$
    \Delta_0(n) = - \eta_0^{(2,q^m)} - \left( 1 + q^{m/2}   \right)\left( \dfrac{1}{2} + \dfrac{ \phi\left( n \right) }{n}  \right).
    $$
    
    (b) In all other cases,
    $$
    \Delta_0(n) = -\epsilon_2 \cdot \left(\dfrac{(-1)^{\gamma }q^{m/2} + 1}{2} + \eta_0^{(2,q^m)}\right) + \dfrac{(-1)^{\gamma} q^{m/2} -1}{n} \phi(n),
    $$
    where
    $$
    \epsilon_2 = \begin{cases}
                  1 & \mbox{ if } n \mbox{ is even;}\\
                  0 & \mbox{ otherwise.} 
                 \end{cases}
$$
   \end{cor}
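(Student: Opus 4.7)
The plan is to apply Theorem \ref{lem: zero trace}, which already expresses $Z_{q,m,N}(0)$ in terms of $\Delta_0(n)$ for $n := \gcd(Q,N)$, so the task reduces to an explicit evaluation of $\Delta_0(n)$ under the semi-primitive hypothesis; the prefactor $(q-1)\phi(K_Q)/(qK_Q)$ and the main term $Q\phi(n)/n$ in the stated formula come verbatim from that theorem. I would unfold $\Delta_0(n) = \sum_{d \mid n,\ d \text{ squarefree}} \mu(d)\,\eta_0^{(d, q^m)}$ using the standard identity $\eta_0^{(d, q^m)} = \frac{1}{d} \sum_{\chi^d = 1} g(\chi)$, separate out the trivial-character contribution $g(\chi_0) = -1$, and group the remaining characters by their exact order $d'$:
\[
\Delta_0(n) = -\frac{\phi(n)}{n} + \sum_{\substack{d' \mid n,\ d' > 1 \\ d' \text{ squarefree}}} c(d',n) \sum_{\chi \text{ of order } d'} g(\chi),
\]
where $c(d',n) := \sum_{d \mid n,\ d\text{ sf},\ d' \mid d} \mu(d)/d$ is a pure Mobius coefficient and the leading $-\phi(n)/n$ uses the identity $\sum_{d \mid n,\ d\text{ sf}} \mu(d)/d = \phi(n)/n$.

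For a character $\chi$ of odd exact order $d' > 1$ whose prime factors all satisfy the common semi-primitive relation $p^j \equiv -1 \pmod{\ell}$, and with $sm = 2\gamma j$, Lemma \ref{lem: semiprimitive} delivers the uniform value $g(\chi) = -(-1)^\gamma q^{m/2}$. Substituting this and collapsing the Mobius coefficients via $\sum_{d \mid n,\ d\text{ sf}} \mu(d) = 0$ (valid since $\rad(n) > 1$) together with $\sum \mu(d)/d = \phi(n)/n$, the odd-order portion of $\Delta_0(n)$ condenses into the clean expression $((-1)^\gamma q^{m/2} - 1)\phi(n)/n$. When $n$ is odd this already matches case (b) with $\epsilon_2 = 0$, so the argument ends there.

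When $n$ is even, the squarefree divisors pair as $\{d_0,\ 2d_0\}$ with $d_0$ odd squarefree and $\mu(2d_0) = -\mu(d_0)$. The isolated $d = 2$ term extracts $\eta_0^{(2, q^m)}$ via the relation $g(\eta_2) = 2\eta_0^{(2, q^m)} + 1$, where $\eta_2$ is the quadratic character. The remaining new contributions are sums of $g(\eta_2\chi)$ over $\chi$ of odd order $d_0 > 1$ whose primes are semi-primitive. Since $2$ falls outside the hypothesis of Lemma \ref{lem: semiprimitive}, the sign of $g(\eta_2\chi)$ must be pinned down by a direct Frobenius-orbit analysis: using $g(\chi)\overline{g(\chi)} = q^m\chi(-1)$ applied to $\eta_2\chi$ together with a Davenport--Hasse-style factorization relating $g(\eta_2\chi)$ to $g(\eta_2)g(\chi)$ modulo an explicit cyclotomic factor, one shows that the relevant Frobenius orbit contains $-1$ precisely when $\gamma$, $p$, and $(p^j+1)/2$ are all odd, which is exactly the configuration of case (a). In that configuration the sign of $g(\eta_2\chi)$ flips relative to what the previous step predicted for $g(\chi)$, producing the form $-\eta_0^{(2,q^m)} - (1+q^{m/2})(1/2 + \phi(n)/n)$; in every other even-$n$ configuration the signs align with the previous step and the contribution assembles into the $\epsilon_2$-correction appearing in case (b).

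The main technical obstacle is this last sign determination: because $2$ lies outside the semi-primitive formula of Lemma \ref{lem: semiprimitive}, no ready-made expression for $g(\eta_2\chi)$ is available, and the four parity conditions defining case (a) must be derived from a direct Frobenius/quadratic-residue analysis of the mixed character $\eta_2\chi$, carefully tracking how the parity of $\gamma$, the parity of $p$, and the parity of $(p^j+1)/2$ interact. Everything else is bookkeeping with the two Mobius-weighted divisor identities $\sum_{d\mid n,\ d\text{ sf}} \mu(d) = 0$ and $\sum \mu(d)/d = \phi(n)/n$, organized so as to recover the exact expressions stated in parts (a) and (b).
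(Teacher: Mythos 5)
Your overall strategy --- reduce to $\Delta_0(n)$ via Theorem \ref{lem: zero trace}, evaluate the M\"obius-weighted sum of Gaussian periods using the semi-primitive formulas, and handle the odd squarefree divisors of $n$ uniformly --- is the same as the paper's, and your treatment of the odd-order contribution (collapsing to $((-1)^{\gamma}q^{m/2}-1)\phi(n)/n$ via $\sum_{d\mid n}\mu(d)=0$ and $\sum_{d\mid n}\mu(d)/d=\phi(n)/n$) is correct and matches case (b) with $\epsilon_2=0$.

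However, there is a genuine gap exactly where you flag ``the main technical obstacle.'' The entire content of the (a)/(b) dichotomy --- and in particular the hypothesis that $(p^j+1)/2$ be odd --- lives in the even squarefree divisors $2d_0$ of $n$, and you do not actually evaluate their contribution: you assert that the sign of $g(\eta_2\chi)$ ``must be pinned down by a direct Frobenius-orbit analysis'' via an unspecified Davenport--Hasse-style factorization, and then simply state the answer. That assertion is the theorem; without carrying it out, nothing distinguishing case (a) from case (b) has been proved. Moreover, your premise that $2$ ``falls outside the hypothesis of Lemma \ref{lem: semiprimitive}'' is mistaken for the composite moduli that actually occur: for odd squarefree $d_0\geq 3$ dividing $n$ with $n$ even, $p$ is necessarily odd, so $p^j\equiv -1\pmod{2}$ automatically and hence $p^j\equiv -1\pmod{2d_0}$ with the same least $j$; Lemma \ref{lem: semiprimitive} therefore applies directly to $N=2d_0>2$ and gives $\eta_0^{(2d_0,q^m)}$ in closed form. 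The case split of that lemma is governed by the parity of $(p^j+1)/(2d_0)$, which (since $d_0$ is odd) is the parity of $(p^j+1)/2$ --- this is precisely where the condition in part (a) comes from, with no Frobenius or Hasse--Davenport machinery needed. The paper's proof (Lemma \ref{lem: semi delta}) pairs each odd $d_0$ with $2d_0$, applies Lemma \ref{lem: semiprimitive} case (b) to $\eta_0^{(d_0,q^m)}$ and case (a) or (b) to $\eta_0^{(2d_0,q^m)}$ according to $v_2(p^j+1)$, and then does the M\"obius bookkeeping. To complete your argument you would either need to execute this pairing, or genuinely carry out the Gauss-sum sign determination you only sketch.
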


It is a simple matter to show that in the case of quadratic extensions ($m=2$) no primitive element of $\mathbb{F}_{q^2}$ with trace zero, in $\F$, exists.
In fact,  this case falls under the more general category below of Proposition \ref{prop: mersenne}, for which we are able to, in this paper, obtain the corresponding formula for the zero trace. 

\begin{prop}\label{prop: mersenne}
 Let $q = p^s$ be a power of a prime number $p$ and let $m > 1$ be an integer. Then there exists a positive integer $j$ such that
 $$
 p^j \equiv -1 \pmod{\rad\left( \frac{q^m-1}{q-1}  \right)}
 $$
 if and only if $m = 2$, or $q = p$ is a Mersenne prime and $m = 4$. The latter case holds precisely for every $j = 2k$ with $k \geq 1$ odd.
\end{prop}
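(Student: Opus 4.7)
\emph{Plan for sufficiency.} When $m = 2$, $Q = q + 1 = p^s + 1$, so $p^s \equiv -1 \pmod{Q}$ and hence $\pmod{\rad(Q)}$; thus $j = s$ works. When $m = 4$ and $q = p$ is a Mersenne prime, $Q = (p+1)(p^2+1)$ with $p+1$ a power of $2$, so $\rad(Q) = 2 \cdot \rad(p^2+1)$. Since $p$ is odd, every odd prime $r \mid p^2+1$ satisfies $p^2 \equiv -1 \pmod{r}$, forcing $\ord_r(p) = 4$. Hence $p^j \equiv -1 \pmod{r}$ iff $j \equiv 2 \pmod{4}$, i.e.\ $j = 2k$ with $k \geq 1$ odd; the condition modulo $2$ is automatic. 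This settles sufficiency together with the exact characterization of $j$.

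\emph{Plan for necessity.} Assume $m \geq 3$ and $p^j \equiv -1 \pmod{\rad(Q)}$; the goal is to force $m = 4$ with $q = p$ Mersenne. The governing observation is: for each odd prime $r \mid p^j + 1$, if $e := \ord_r(p)$, then $e \mid 2j$ and $e \nmid j$ force $v_2(e) = v_2(j) + 1$. Consequently every odd prime divisor of $\rad(Q)$ carries the same $2$-adic valuation of its multiplicative order, and in particular none has odd order. The plan is to contradict this by exhibiting either a prime $r \mid Q$ with $\ord_r(p)$ odd, or two such primes whose orders have distinct $2$-adic valuations. The tool is the cyclotomic factorization $Q = \prod_{d \mid ms,\, d \nmid s}\Phi_d(p)$ together with the Bang--Zsygmondy theorem: except for $d \in \{1, 2, 6\}$ under the familiar conditions on $p$, $p^d - 1$ admits a primitive prime divisor $r$ with $\ord_r(p) = d$, which then divides $Q$.

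\emph{Case analysis.} Writing $m = 2^a m'$ and $s = 2^b s'$ with $m', s'$ odd, I would case-split on $m'$. If $m' > 1$, take $d = m' s'$: it is odd and $\geq 3$, divides $ms$, and does not divide $s$ (since $m' \nmid 2^b$), so Zsygmondy yields an odd-order prime dividing $Q$, giving a contradiction. If $m' = 1$ (so $m = 2^a$ with $a \geq 2$), pair $d_1 = 2^{b+1}$ with $d_2 = 2^{b+2}$; both divide $ms$ (as $a \geq 2$) and not $s$, and their primitive prime divisors have orders with $v_2 = b+1$ and $v_2 = b+2$ respectively. The sole obstruction is the Zsygmondy exception $d_1 = 2$ with $p + 1$ a power of $2$, i.e.\ $b = 0$ and $p$ Mersenne (and thus $p$ odd). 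In that subcase I would substitute: if $s' > 1$, use $d_1 = 2 s_0$ for any odd prime $s_0 \mid s'$ (no exception, since $p \neq 2$) together with $d_2 = 4$; if $s' = 1$ and $a \geq 3$, use $d_1 = 4$ and $d_2 = 8$. The only case resisting all substitutions is $a = 2$, $b = 0$, $s' = 1$, $p$ Mersenne, which is exactly $m = 4$ with $q = p$ Mersenne.

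\emph{Main obstacle.} The technical heart is the careful navigation of the Bang--Zsygmondy exceptions. The non-existence of a primitive prime divisor of $p^2 - 1$ precisely when $p$ is a Mersenne prime is what carves out the lone exception in the proposition; the proof must verify that in every adjacent subcase an alternative choice of cyclotomic factors of $Q$ still produces two primitive primes whose orders have distinct $2$-adic valuations, which is what the ladder of substitutions $d_1 \in \{2,\, 2s_0,\, 4\}$, $d_2 \in \{4,\, 8\}$ is designed to supply.
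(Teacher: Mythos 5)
Your proof is correct, and it reaches the result by a genuinely different route than the paper. Both arguments pivot on the same observation: $p^j \equiv -1 \pmod{r}$ forces $v_2(\ord_r(p)) = v_2(j)+1$ for every odd prime $r \mid Q$, so it suffices to exhibit either an odd‑order prime divisor of $Q$ or two prime divisors of $Q$ whose orders have distinct $2$-adic valuations. Where you diverge is in how those primes are produced. The paper first shows $m$ must be a power of $2$ by a bare-hands gcd argument (constructing an odd $d \geq 3$ dividing $Q$ with $p^t \equiv 1 \pmod d$ for odd $t$), then works with the explicit factorization $Q = \prod_{i=0}^{n-1}\left(p^{s2^i}+1\right)$, deduces from the valuation clash that one of two consecutive factors must be a power of $2$, and finally invokes Mih\u{a}ilescu's theorem (Catalan's conjecture) to force that factor to be $p+1$ with $p$ a Mersenne prime and $s=1$. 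You instead read the prime divisors of $Q$ off the cyclotomic factorization $\prod_{d \mid ms,\ d \nmid s}\Phi_d(p)$ and summon the needed primes via Bang--Zsygmondy, so the Mersenne condition emerges from the $n=2$ Zsygmondy exception rather than from Catalan; your ladder of substitute pairs $(d_1,d_2)$ correctly covers every adjacent subcase, and the surviving case $a=2$, $b=0$, $s'=1$ is exactly $m=4$, $q=p$ Mersenne. This is arguably cleaner and avoids the (rather heavy) appeal to Mih\u{a}ilescu, at the cost of citing Zsygmondy. The sufficiency and the characterization $j=2k$ with $k$ odd are handled identically in both proofs via $\ord_r(p)=4$ for odd primes $r \mid p^2+1$. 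Two cosmetic points: $\rad(Q) = \rad(p^2+1)$ rather than $2\rad(p^2+1)$, since $2 \mid p^2+1$; and to pin down the set of valid $j$ exactly one should note, as the paper does, that $p^2+1$ is not a power of $2$, so an odd prime $r \mid p^2+1$ of order $4$ genuinely exists.
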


Recall that a Mersenne prime $M_{\ell}$ is of the form
$M_{\ell} = 2^{\ell} - 1$ for some prime $\ell$. Usually the world's record of the largest prime is broken by a Mersenne prime, and,
although only 48 such primes have been discovered thus far (see the Great Internet Mersenne Prime Search (GIMPS) available online) it is a well-known conjecture that there exist infinitely many of them.
They appear in various areas of number theory and finite fields, including in the Great Trinomial Hunt \cite{GTH}, an ongoing project for the search of primitive trinomials (i.e., primitive 
polynomials with exactly
three non-zero terms) over $\mathbb{F}_2$ with degree the ``exponent'' $\ell$ of a Mersenne prime $M_{\ell}$.

We obtain the following simple formula for the number of primitive elements, with absolute trace zero, in quartic extensions of Mersenne prime fields. 

\begin{cor}\label{thm: tuxanidy}
 Let $p$ be a Mersenne prime. Then the number of primitive elements $\xi$ in $\mathbb{F}_{p^4}$ satisfying $\operatorname{Tr}_{\mathbb{F}_{p^4}/\mathbb{F}_p}(\xi) = 0$ is given by
 $$
 \dfrac{1}{p}\left(\phi\left( p^4-1 \right) - \phi \left( \dfrac{p^4-1}{p+1}  \right) \right).
 $$
\end{cor}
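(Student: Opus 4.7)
The plan is to specialize Theorem \ref{lem: zero trace} with $q = p$, $m = 4$, $N = p^4 - 1$, so that $\gcd(Q, N) = Q = (p+1)(p^2+1)$, and then to evaluate $\Delta_0(Q)$ via the semi-primitive case (a) of Corollary \ref{thm: semi delta}. By Proposition \ref{prop: mersenne}, for a Mersenne prime $p$ and $m = 4$ the least $j$ with $p^j \equiv -1 \pmod{\rad(Q)}$ is $j = 2$, so $\gamma = sm/(2j) = 1$. A direct check confirms the hypotheses of case (a): $\gamma$ and $p$ are odd, $n = Q$ is even, and $(p^2+1)/2$ is odd since $p = 2^{\ell}-1$ gives $p^2 + 1 = 2(2^{2\ell - 1} - 2^{\ell} + 1)$ with the second factor odd.

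Next I compute $\eta_0^{(2, p^4)}$, which case (a) requires. Every Mersenne prime $p \geq 3$ satisfies $p \equiv 3 \pmod 4$, and with $sm = 4$ one has $(-\sqrt{-1})^{sm} = 1$; invoking Lemma \ref{lem: period N = 2} (consistently with Corollary \ref{thm: N = 2}) yields $\eta_0^{(2, p^4)} = -(1+p^2)/2$. Plugging into case (a), the first two terms cancel and I obtain $\Delta_0(Q) = -(1+p^2)\phi(Q)/Q$. Substituting into Theorem \ref{lem: zero trace}, and using $Q - (1+p^2) = p^3 + p = pQ/(p+1)$, I arrive at the compact expression
$$
Z_{p,4,p^4-1}(0) \;=\; \frac{(p-1)\,\phi(K_Q)\,\phi(Q)}{K_Q\,(p+1)},
$$
where $K_Q$ is the largest divisor of $p^4-1$ coprime to $Q$.

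It then remains to reconcile this with the stated value $(1/p)\bigl(\phi(p^4-1) - \phi((p^4-1)/(p+1))\bigr)$. Writing $p+1 = 2^{\ell}$ and $p^2+1 = 2M$ with $M$ odd, I factor $p^4 - 1 = 2^{\ell+2}(2^{\ell-1}-1)M$ and $(p^4-1)/(p+1) = 4(2^{\ell-1}-1)M$; multiplicativity of $\phi$ applied to the $2$-part and the odd complement gives $\phi(p^4-1) - \phi((p^4-1)/(p+1)) = 2p\,\phi((2^{\ell-1}-1)M)$. On the other side, $K_Q$ consists of the primes of $2^{\ell-1}-1$ avoiding $M$; writing $2^{\ell-1}-1 = uv$ with $u$ its $M$-part and $v = K_Q$ coprime to $uM$, the identity $\phi(uM) = u\,\phi(M)$ collapses the left-hand formula to $2\,\phi((2^{\ell-1}-1)M)$ as well, and the two sides agree. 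The main obstacle will be the final $\phi$-identity, because $2^{\ell-1}-1$ and $M$ may share prime factors and the reduction must track them through the decomposition $K_Q = v$; this is where the Mersenne hypothesis $p+1 = 2^{\ell}$ is essential, ensuring that no odd prime of $p+1$ pollutes either side.
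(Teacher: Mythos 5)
Your proposal is correct and follows essentially the same route as the paper: verify the semi-primitive hypothesis via Proposition \ref{prop: mersenne} with $j=2$, $\gamma=1$, compute $\Delta_0(Q)=-\phi(Q)/(p+1)$ from case (a) of Lemma \ref{lem: semi delta} together with $\eta_0^{(2,p^4)}=-(1+p^2)/2$, and substitute into Theorem \ref{lem: zero trace}. Your final reconciliation is more laborious than necessary: since $\gcd(p-1,p^2+1)=2$, the numbers $2^{\ell-1}-1=(p-1)/2$ and $M=(p^2+1)/2$ are automatically coprime (so $u=1$ and $K_Q=(p-1)/2$), which is the one-line observation the paper uses to collapse $\phi\bigl(\tfrac{p-1}{2}\bigr)\phi\bigl(\tfrac{p^2+1}{2}\bigr)$ into $\tfrac12\phi\bigl(\tfrac{p^4-1}{p+1}\bigr)$.
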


 In Section 5 we finally return to the concept of uniformity, already met in Theorem \ref{thm: carlitz}, now for $N$-free elements; in particular, for primitive elements. 
 Although it is easy to find examples of $q,m,N$ for which $Z_{q,m,N}(c)$ does not behave uniformly for $c \in \F^*$ (and indeed, when $N = q^m-1$ is fixed as well) 
 it is of special interest to find and classify instances of $q,m,N$ for which 
 $Z_{q,m,N}(c\neq 0)$ does. One of the obvious reasons being that, in this case, in order to obtain the number of $N$-free elements with a prescribed non-zero trace, it would be enough to find the corresponding 
 number for the zero trace. The following theorem gives a sufficient criteria for this to happen, but we ask the interested able reader to characterize all such instances of $q,m,N$.
 
 \begin{thm}\label{thm: uniformity}
  Let $q$ be a power of a prime and $N$ be a positive divisor of $q^m-1$. If every prime divisor of $N$ divides $Q = (q^m-1)/(q-1)$, 
  then, for every element $c \in \F \setminus\{0\}$, the number of $N$-free elements $\xi \in \Fr$ satisfying $\Tmq(\xi) = c$ is given by
  \begin{equation*}
  Z_{q,m,N}(c \neq 0)= \dfrac{\frac{q^m-1}{N} \phi(N) - Z_{q,m,N}(0)}{q-1}.
  \end{equation*}
 \end{thm}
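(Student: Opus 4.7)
The plan is to prove this by showing that $Z_{q,m,N}(c)$ takes the same value for every $c \in \F^*$, and then extracting the formula from the obvious count of the total number of $N$-free elements.

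First I would record the total. Writing $\xi = \alpha^k$ with $\alpha$ a fixed primitive element of $\Fr$, an element is $N$-free if and only if $\gcd(k,N) = 1$, and counting such $k$ in $\{0,1,\dots,q^m-2\}$ gives the standard total
\[
\sum_{c \in \F} Z_{q,m,N}(c) \;=\; \frac{q^m-1}{N}\,\phi(N).
\]
So the only real task is uniformity: once I know $Z_{q,m,N}(c) = Z_{q,m,N}(1)$ for every $c \in \F^*$, I can isolate $\sum_{c \in \F^*} Z_{q,m,N}(c) = (q-1) Z_{q,m,N}(c\neq 0)$ and subtract $Z_{q,m,N}(0)$ from the total, yielding exactly the claimed identity.

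To prove uniformity I will exploit the $\F^*$-multiplication on $\Fr$. Given $a \in \F^*$, the map $\xi \mapsto a\xi$ is a bijection of $\Fr^*$, and since $\Tmq$ is $\F$-linear it sends trace-$c$ elements to trace-$ac$ elements. Hence for any $c \in \F^*$, multiplication by $c$ is a bijection between the trace-$1$ and trace-$c$ subsets of $\Fr^*$. The crucial step is then to verify that this bijection preserves $N$-freeness under the hypothesis on $N$. Writing $a = \alpha^{jQ}$ for some $j$ (since $\alpha^Q$ generates $\F^*$) and $\xi = \alpha^k$, we have $a\xi = \alpha^{k+jQ}$. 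For any prime $\ell \mid N$, the hypothesis gives $\ell \mid Q$, so $k + jQ \equiv k \pmod{\ell}$, and therefore $\ell \mid (k+jQ)$ if and only if $\ell \mid k$. Since $\gcd(\,\cdot\,,N) = 1$ is detected prime-by-prime, we obtain $\gcd(k+jQ, N) = 1 \Longleftrightarrow \gcd(k,N) = 1$, i.e.\ $a\xi$ is $N$-free precisely when $\xi$ is. This delivers $Z_{q,m,N}(c) = Z_{q,m,N}(1)$ for every $c \in \F^*$.

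The main (and only) obstacle is precisely this preservation of $N$-freeness under $\F^*$-multiplication, which is where the hypothesis ``every prime divisor of $N$ divides $Q$'' enters in an essential way; without it, multiplication by $a \in \F^*$ shifts the exponent by a multiple of $Q$, which need not be coprime to $N$. Everything else is a one-line counting argument, so the proof should be short once this lemma-style observation is in place.
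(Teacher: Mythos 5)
Your proof is correct, and it takes a genuinely different and more elementary route than the paper's. The paper proves this theorem via its Lemma~\ref{lem: uniformity}, which is built on the Gaussian-period formula of Lemma~\ref{lem: Z formula} together with Theorem~\ref{lem: zero trace}: it writes $Z_{q,m,N}(c\neq 0)$ in a form whose only $c$-dependence sits in the difference $f_0(N,c,\Delta) - f_0(\gcd(Q,N),c,\Delta)$, and then observes that under the hypothesis $\rad(N)=\rad(\gcd(Q,N))$, so this difference vanishes because $f_0(d,c,\Delta)$ depends only on $\rad(d)$. You instead prove uniformity directly: since $\F^* = \langle \alpha^Q\rangle$, multiplication by $a=\alpha^{jQ}\in\F^*$ shifts discrete logarithms by $jQ$, and the hypothesis that every prime divisor of $N$ divides $Q$ is exactly what guarantees $\gcd(k+jQ,N)=1 \Leftrightarrow \gcd(k,N)=1$; combined with the $\F$-linearity of the trace, this makes $\xi\mapsto c\xi$ a bijection between the $N$-free trace-$1$ and trace-$c$ elements, and the stated formula drops out of the total count $\frac{q^m-1}{N}\phi(N)$. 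Your argument is self-contained, avoids characters and Gaussian periods entirely, and makes the role of the hypothesis completely transparent; what the paper's route buys in exchange is the more general Lemma~\ref{lem: uniformity}, which quantifies the deviation from uniformity for arbitrary $N$ (not just those with $\rad(N)\mid Q$) and feeds into the later corollaries of Section~5.
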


 In particular, setting $N = q^m-1$, we obtain that $Z_{q,m,q^m-1}(c)$ is a constant independent of $c \in \F^*$ whenever the radical (the product of all the distinct prime divisors) 
 of $Q$ is the same as that of $q^m-1$. This occurs exactly when all the prime factors of $q-1$ divide $m$; see Corollary \ref{cor: unif} for this. 
Thus we obtain the following immediate consequence to Theorem \ref{thm: uniformity} and Corollary \ref{thm: semi delta}.
 
 \begin{cor}\label{cor: semidelta uniform}
  Assume that $q,r,N,$ satisfy the assumptions of Corollary  \ref{thm: semi delta} and further assume that $\rad(N) \mid Q$. Let $\eta_0^{(2,q^m)}$ be as in Lemma \ref{lem: period N = 2}. Then for any non-zero $c \in \F^*$, the number of $N$-free elements
  $\xi \in \Fr$ with $\Tmq(\xi) = c$ is given in what follows. 
  
  (a) If $\gamma$ and $p$ are odd, $N$ is even and $(p^j+1)/2$ is odd, then
  \begin{equation*}
  Z_{q,m,N}(c \neq 0)= \dfrac{1}{q} \left( \eta_0^{(2,q^m)} + \left( 1 + q^{m/2} \right)\left( \dfrac{1}{2} +  \dfrac{\phi(N)}{N} \right)   +  \phi(N)\left(  \dfrac{q Q}{N} - 1  \right)   \right).
  \end{equation*}
  
  (b) In all other cases,
  $$
  Z_{q,m, N}(c\neq 0) = \dfrac{\phi(N)}{qN} \left( q^m + (-1)^{\gamma +1} q^{m/2} + qQ - N + \epsilon_2 \cdot \left(\dfrac{(-1)^{\gamma }q^{m/2} + 1}{2} + \eta_0^{(2,q^m)}\right)   \right),
  $$
  where
 $$
    \epsilon_2 = \begin{cases}
                  1 & \mbox{ if } N \mbox{ is even;}\\
                  0 & \mbox{ otherwise.} 
                 \end{cases}
$$
 \end{cor}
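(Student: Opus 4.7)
The plan is to combine Theorem \ref{thm: uniformity} with the explicit value of $Z_{q,m,N}(0)$ supplied by Corollary \ref{thm: semi delta}. Since $\rad(N)\mid Q$, every prime divisor of $N$ divides $Q$, so the hypothesis of Theorem \ref{thm: uniformity} is met, yielding
$$
Z_{q,m,N}(c \neq 0) \;=\; \frac{1}{q-1}\left(\frac{q^m-1}{N}\phi(N) - Z_{q,m,N}(0)\right).
$$
The task thus reduces to simplifying $Z_{q,m,N}(0)$ under the same hypothesis and substituting.

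Two elementary observations streamline the substitution. First, the largest divisor $K_Q$ of $N$ coprime to $Q$ is $1$, so $\phi(K_Q)/K_Q = 1$. Second, setting $n := \gcd(Q,N)$, every prime of $N$ divides $Q$ and hence $n$, while $n \mid N$ forces the converse; therefore $\rad(n) = \rad(N)$. Because the ratio $\phi(k)/k = \prod_{p \mid k}(1-1/p)$ depends only on $\rad(k)$, we get $\phi(n)/n = \phi(N)/N$, and in particular $Q\phi(n)/n = Q\phi(N)/N$. Feeding these into Corollary \ref{thm: semi delta} gives
$$
Z_{q,m,N}(0) \;=\; \frac{q-1}{q}\left(\frac{Q\phi(N)}{N} + \Delta_0(n)\right),
$$
where $\Delta_0(n)$ is read off from case (a) or (b) of that corollary. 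Note that $\Delta_0$ depends only on the square-free part of its argument, so $\Delta_0(n)$ may be computed from $\rad(N)$ alone.

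Substituting back into the formula from Theorem \ref{thm: uniformity} and using $q^m - 1 = (q-1)Q$, the factor $q-1$ in the numerator and denominator cancels, producing
$$
Z_{q,m,N}(c \neq 0) \;=\; \frac{(q-1)Q\phi(N)}{qN} \;-\; \frac{\Delta_0(n)}{q}.
$$
I then substitute the explicit expression for $\Delta_0(n)$ from Corollary \ref{thm: semi delta} (again using $\phi(n)/n = \phi(N)/N$ to convert all $\phi(n)/n$ factors into $\phi(N)/N$), and collect the contributions involving $\eta_0^{(2,q^m)}$, $q^{m/2}$, and the $\phi(N)/N$ factor. This reproduces the two displayed formulas, with $\epsilon_2$ switching on exactly when $N$ (equivalently $n$) is even.

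There is no genuine obstacle here beyond bookkeeping: the argument is pure assembly of two results already proved in the paper. The one point requiring care is to verify that the case split in Corollary \ref{thm: semi delta} transfers cleanly to a case split phrased in terms of $N$ — specifically that ``$n$ is even'' and the subcase-(a) condition ``$(p^j+1)/2$ odd with $p, \gamma$ odd and $n$ even'' can be restated with $N$ in place of $n$. Both transfers follow immediately from $\rad(n) = \rad(N)$, since the parity of an integer and the appearance of any odd prime factor depend only on its radical.
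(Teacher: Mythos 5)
Your route is exactly the one the paper intends: combine Theorem \ref{thm: uniformity} with Corollary \ref{thm: semi delta}, using $K_Q=1$ and $\phi(n)/n=\phi(N)/N$ for $n=\gcd(Q,N)$, and your intermediate identity
$$
Z_{q,m,N}(c\neq 0)\;=\;\frac{(q^m-1)\phi(N)}{qN}\;-\;\frac{\Delta_0(n)}{q}
$$
is correct. The gap is in your final sentence of computation: substituting the expressions for $\Delta_0(n)$ does \emph{not} ``reproduce the two displayed formulas,'' and you never actually carry out this bookkeeping. In case (a) the substitution yields the third summand $\phi(N)\cdot\frac{q^m-1}{N}=\phi(N)\cdot\frac{(q-1)Q}{N}$, whereas the displayed formula has $\phi(N)\left(\frac{qQ}{N}-1\right)=\phi(N)\cdot\frac{qQ-N}{N}$; these agree only when $N=Q$. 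In case (b) the substitution yields
$$
\frac{\phi(N)}{qN}\left(q^m+(-1)^{\gamma+1}q^{m/2}\right)+\frac{\epsilon_2}{q}\left(\frac{(-1)^{\gamma}q^{m/2}+1}{2}+\eta_0^{(2,q^m)}\right),
$$
i.e.\ without the term $qQ-N$ and with the $\epsilon_2$-term \emph{not} multiplied by $\phi(N)/N$.

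That this is not merely a different algebraic form can be checked numerically: take $q=2$, $m=4$, $N=5$, so $Q=15$, $j=2$, $\gamma=1$, $\epsilon_2=0$, case (b). The displayed formula gives $\frac{4}{10}(16+4+30-5)=18$, which exceeds the total number $\frac{15}{5}\phi(5)=12$ of $5$-free elements of $\mathbb{F}_{16}$. The correct value is $Z_{2,4,5}(1)=8$ (by hand: the non-$5$-free elements of $\mathbb{F}_{16}^*$ are exactly $\mathbb{F}_{4}^*$, all of absolute trace $0$, so $Z_{2,4,5}(0)=7-3=4$ and $Z_{2,4,5}(1)=12-4=8$), and $8$ is precisely what your intermediate identity produces. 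So the statement as displayed is not what your (otherwise sound) assembly yields; the assertion that the substitution ``reproduces the two displayed formulas'' is false, and a complete proof must exhibit the final substitution explicitly, at which point it establishes the corrected expressions above rather than the ones in the statement.
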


 As a consequence of Theorem \ref{thm: uniformity} one obtains the following interesting property of the sum $\Delta_0(N)$ for $N \mid q^m-1$ such that $\rad(N) \mid Q = (q^m-1)/(q-1)$.
 It is the constant difference between the number of $N$-free elements with zero and non-zero traces in $\mathbb{F}_t$, for any subfield $\mathbb{F}_t$ of $\F$.
 
 \begin{cor}\label{cor: uniformity 1}
Let $N \mid q^m-1$ such that every prime divisor of $N$ divides $(q^m-1)/(q-1)$. Then for every subfield $\mathbb{F}_t$ of $\F$ and every $c_t \in \mathbb{F}_t^*$, we have
  $$ 
  \Delta_0(N) = Z_{t, m[\F:\mathbb{F}_t], N}(0) - Z_{t, m[\F:\mathbb{F}_t], N}(c_t \neq 0).
  $$
 Furthermore, if $t \neq q$, then
  $$
  \Delta_0(N) = \dfrac{q Z_{q, m, N}(0) - t Z_{t, m[\F:\mathbb{F}_t], N}(0)}{q - t}.
  $$
  \end{cor}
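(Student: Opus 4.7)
The plan is to derive both identities as direct algebraic consequences of Theorem~\ref{thm: uniformity} combined with the formula for $Z_{q,m,N}(0)$ in Theorem~\ref{lem: zero trace}, and then observe that the hypothesis transfers to every subfield of $\F$.

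First I would start with the identity from Theorem~\ref{thm: uniformity}, which applies because $\rad(N)\mid Q$. Rearranging it gives
\begin{equation*}
Z_{q,m,N}(0)-Z_{q,m,N}(c\neq 0)=\dfrac{qZ_{q,m,N}(0)-\tfrac{q^m-1}{N}\phi(N)}{q-1}.
\end{equation*}
Under the hypothesis $\rad(N)\mid Q$, the part $K_Q$ of $N$ coprime to $Q$ is $1$ and $\gcd(Q,N)=N$, so Theorem~\ref{lem: zero trace} specializes to
$Z_{q,m,N}(0)=\tfrac{q-1}{q}\bigl(\tfrac{Q}{N}\phi(N)+\Delta_0(N)\bigr)$. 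Substituting this into the numerator above and using $\tfrac{q^m-1}{N}\phi(N)=(q-1)\tfrac{Q}{N}\phi(N)$, the $\tfrac{Q}{N}\phi(N)$ terms cancel and one is left with $(q-1)\Delta_0(N)$ in the numerator; the factor $q-1$ cancels and we obtain $Z_{q,m,N}(0)-Z_{q,m,N}(c\neq 0)=\Delta_0(N)$.

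Next I would verify that the hypothesis passes to any subfield $\mathbb{F}_t\subseteq\F$. Writing $r=[\F:\mathbb{F}_t]$, the role of $Q$ over $\mathbb{F}_t$ is played by $(q^m-1)/(t-1)$, and since $t-1\mid q-1$ we have $Q\mid(q^m-1)/(t-1)$; thus $\rad(N)\mid Q$ forces every prime divisor of $N$ to divide $(q^m-1)/(t-1)$. Moreover, the sum $\Delta_0(N)=\sum_{d\mid N}\mu(d)\eta_0^{(d,q^m)}$ depends only on $N$ and on $q^m$, not on the chosen base field. Applying the identity just proved with $(q,m)$ replaced by $(t,mr)$ therefore yields
\begin{equation*}
\Delta_0(N)=Z_{t,mr,N}(0)-Z_{t,mr,N}(c_t\neq 0),
\end{equation*}
which is the first assertion.

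For the second assertion I would subtract the two equations $qZ_{q,m,N}(0)-\tfrac{q^m-1}{N}\phi(N)=(q-1)\Delta_0(N)$ and $tZ_{t,mr,N}(0)-\tfrac{q^m-1}{N}\phi(N)=(t-1)\Delta_0(N)$ derived en route above. The $\phi(N)(q^m-1)/N$ term cancels, leaving $qZ_{q,m,N}(0)-tZ_{t,mr,N}(0)=(q-t)\Delta_0(N)$, and dividing by $q-t$ (valid since $t\neq q$) gives the desired formula.

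The only mildly subtle point, and the one I would state carefully, is the invariance of $\Delta_0(N)$ under change of base field; everything else is bookkeeping. There is no genuine obstacle once Theorems~\ref{thm: uniformity} and \ref{lem: zero trace} are in hand.
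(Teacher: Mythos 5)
Your proof is correct and follows essentially the same route as the paper: both identities come from combining Theorem~\ref{lem: zero trace} (with $K_Q=1$, $\gcd(Q,N)=N$) with Theorem~\ref{thm: uniformity}, together with the observation that the hypothesis $\rad(N)\mid Q$ transfers to every subfield $\mathbb{F}_t$ because $Q\mid(q^m-1)/(t-1)$, and that $\Delta_0(N)$ depends only on $\mathbb{F}_{q^m}$ and $p$. The only cosmetic difference is in the second identity, where you subtract the two relations $qZ_{q,m,N}(0)-\tfrac{q^m-1}{N}\phi(N)=(q-1)\Delta_0(N)$ directly instead of invoking Corollary~\ref{cor: uniformity 0} as the paper does; both are equivalent bookkeeping.
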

  
  Although the paper is primarily concerned with the number $Z_{q,m,N}(c)$, we briefly consider in Section 6 the seemingly closely related number, $P_{q,m,N}(c)$,
  of elements with order $N$ having a prescribed trace $c$. There we derive a general formula for $P_{q,m,N}(c)$ (see Lemma \ref{lem: relation}) and show its relation to $Z_{q,m,N}(c)$ as well as Hamming weights
  of specific codewords in irreducible cyclic codes.
  Let $L_Q$ be the largest divisor
  of $q^m-1$ with the same radical as that of $Q$. We also show, in Lemma \ref{lem: relation},
  that if $N \mid q^m-1$ is such that $L_Q \mid N$, then the following simple relation holds: $Z_{q,m,N}(0) = \frac{q^m-1}{N} P_{q,m,N}(0)$.
  We believe it should not be too difficult to generalize this even further for arbitrary $N$, but we leave this to the interested reader. 
  As a consequence of this and of Cohen's result (see Theorem \ref{thm: cohen} above) we show in Theorem \ref{thm: existence result} that 
  there exists an element in $\mathbb{F}_{q^m}^*$ of order $N$ (with $L_Q \mid N$)
having trace $0$ if and only if $m \neq 2$ and $(q,m) \neq (4,3)$.
   In addition Lemma \ref{lem: relation} together with Corollary \ref{lem: zero trace} yields, in
  Theorem \ref{thm: order DQ and mersenne}, an explicit expression for the number of elements of order $2(p+1)(p^2+1)$, 
  in quartic extensions of Mersenne prime fields $\mathbb{F}_p$, having absolute trace zero. 

 The rest of the paper goes as follows. In Section 2 we go over some preliminary concepts which will be of use in further sections. 
 In Section 3 we derive a formula for $Z_{q,m,N}(c)$ in terms of Gaussian periods (see Lemma \ref{lem: Z formula}). 
 In Section 4 we specifically consider the case of the zero trace and simplify our formula, in Subsection 4.1, with the use of a lemma due to Ding and Yang \cite{ding}. 
 Then in Subsection 4.2 we prove Corollaries \ref{thm: semi delta} and \ref{thm: tuxanidy}. In Section 5 we give a sufficient criteria for uniformity to occur (see Theorem \ref{thm: uniformity}),
 as well as 
 some other related results. Then in Section 6 we focus our attention to the number $P_{q,m,N}(c)$ and give some other related results. In particular, we obtain that  the number of non-zero elements in $\mathbb{F}_{p^4}^*$ with the corresponding large order $2(p+1)(p^2+1)$ and having absolute trace zero is $2\phi(p^2+1)$, where $p$ is a Mersenne prime (see Theorem \ref{thm: order DQ and mersenne}).
 Finally in Appendix we include a table of data corresponding to Corollary \ref{thm: tuxanidy}, giving the number of primitive elements in quartic extensions of Mersenne prime fields, 
 with absolute trace zero,
 for the first ten Mersenne primes.

\section{Preliminaries}

In this section we go over some preliminary concepts which will be of use in further sections. As before, we let $q = p^s$ be a power of a prime number $p$, let $\F$ be the finite field
with $q$ elements and  $\Fr$ be the degree-$m$ extension of $\F$.
The following concepts and definitions are well-known and may be found for example in Chapter 5 of \cite{lidl} and in \cite{ding}.
Now let $\chi_q, \chi_{q^m},$ be the canonical additive characters of $\F, \Fr,$ respectively, defined by 
$\chi_q(x) = e^{2\pi\sqrt{-1}\Tqp(x)/p}$ for $x \in \F$, and $\chi_{q^m} = \chi_q \circ \Tmq$. By the transitivity of the trace function, $\chi_{q^m}(z) = e^{2\pi\sqrt{-1}\Tmp(z)/p}$ for $z \in \Fr$. 
Denote with $ \chi_a^{(q^m)}$ the additive character of $\Fr$ corresponding to $a \in \Fr$; that is $ \chi_a^{(q^m)}(z) = \chi_{q^m}(az)$ for any $z \in \Fr$.
Clearly $\chi_1^{(q^m)} = \chi_{q^m}$. The following {\em orthogonality relation} will be of use.
\begin{equation}\label{eqn: orthogonality}
\sum_{a \in \F}\chi_q(ax) = \begin{cases}
                             q & \mbox{ if } x = 0;\\
                             0 & \mbox{ if } x \in \F^*.
                            \end{cases}
\end{equation}

Let $\alpha$ be a primitive element of $\Fr$. For a divisor $N$ of $q^m-1$, let $\psi_N$ be a 
multiplicative character of $\Fr$ of order $N$. That is, $\psi_N$ is defined by $\psi_N(\alpha^v) = e^{2\pi\sqrt{-1}jv/N}$ for some integer $j$ that is coprime to $N$.

The {\em Gaussian sums} of order $N$ are given by
$$
G_{q^m}\left(\psi_N,  \chi_a^{(q^m)}\right) = \sum_{\beta \in \Fr^*} \psi_N(\beta) \chi_{q^m}(a \beta).
$$
We denote $G_{q^m}(\psi_N) := G_{q^m}(\psi_N, \chi_{q^m})$. Note that if $a \neq 0$, then $G_{q^m}(\psi_N,  \chi_a^{(q^m)}) = \overline{\psi_N(a)} G_{q^m}(\psi_N)$ (Theorem 5.12 (i), \cite{lidl}).

For $N \mid q^m-1$,
the {\em cyclotomic classes} of $\Fr^*$ of type $(N,q^m)$ are defined by $C_k^{(N,q^m)} = \alpha^k \left< \alpha^N \right>$, where $k \in \mathbb{Z}$. 
Clearly $C_k^{(N,q^m)} = C_0^{(N,q^m)}$ whenever $k \equiv 0 \pmod{N}$.
Then the {\em Gaussian periods} of type $(N,q^m)$ are given by
$$
\eta_k^{(N,q^m)} = \sum_{x \in C_k^{(N,q^m)}} \chi_{q^m}(x).
$$
The Gaussian sums are the discrete Fourier transforms of the Gaussian periods and hence the two are related by the equation
\begin{align}\label{eqn: periods and gauss sums}
 \eta_k^{(N,q^m)} &= \dfrac{1}{N} \sum_{j=0}^{N-1} \sum_{x \in \Fr^*} \chi_{q^m} \left( \alpha^k x  \right) \psi_N^j (x)
 = \dfrac{1}{N} \sum_{j=0}^{N-1}\overline{\psi_N^j\left( \alpha^k  \right)}G_{q^m}\left( \psi_N^j \right) \\
 &= \dfrac{1}{N} \left( -1 + \sum_{j=1}^{N-1}\overline{\psi_N^j\left( \alpha^k  \right)}G_{q^m}\left( \psi_N^j \right) \right) \nonumber,
\end{align}
where $\psi_N$ is a multiplicative character of $\Fr$ with order $N$ (see Equation (9) in \cite{ding}).

In their study of Hamming weights of irreducible cyclic codes, Ding and Yang \cite{ding} recently obtained the following result regarding cyclotomic classes. 
 
\begin{lem}[{\bf Lemma 5, \cite{ding}}]\label{lem: ding}
 Let $N$ be a positive divisor $q^m-1$ and let $k \in \mathbb{Z}$. We have the following multiset equality:
 $$
 \left\{ ax \ : \ a \in \F^*,\ x \in C_k^{(N,q^m)}   \right\} = \dfrac{(q-1)\gcd\left(Q,  N\right)}{N} \ast C_k^{\left( \gcd\left(Q, N\right), q^m  \right)},
 $$
 where the right hand side denotes the multiset in which each element in the set $ C_k^{\left( \gcd\left(Q, N\right), q^m  \right)}$ appears in the multiset 
 with multiplicity $\frac{(q-1)\gcd\left(Q, N\right)}{N}$.
\end{lem}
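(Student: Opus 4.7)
The plan is to translate everything into the language of exponents of a primitive element and then count. Since $|\F^*| = q-1 = (q^m-1)/Q$, we have $\F^* = \langle \alpha^Q \rangle$, so $\F^* = \{\alpha^{Qi} : 0 \le i \le q-2\}$. By definition, $C_k^{(N,q^m)} = \{\alpha^{k + Nj} : 0 \le j \le (q^m-1)/N - 1\}$. Therefore the multiset on the left is
$$
\left\{ \alpha^{k + Qi + Nj} \ : \ 0 \le i \le q-2,\ 0 \le j \le \tfrac{q^m-1}{N}-1 \right\},
$$
with exponents read modulo $q^m-1$.

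Next I would identify the support. The set of exponents $\{Qi + Nj \bmod (q^m-1)\}$ is exactly the subgroup of $\mathbb{Z}/(q^m-1)\mathbb{Z}$ generated by $Q$ and $N$, which equals $\gcd(Q,N)\cdot\mathbb{Z}/(q^m-1)\mathbb{Z}$. Hence, as a set,
$$
\{ ax \ : \ a \in \F^*,\ x \in C_k^{(N, q^m)}\} = \alpha^k \langle \alpha^{\gcd(Q,N)}\rangle = C_k^{(\gcd(Q,N),\, q^m)}.
$$

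To handle the multiplicity, I would consider the group homomorphism
$$
\phi \ : \ \mathbb{Z}/(q-1)\mathbb{Z} \times \mathbb{Z}/\tfrac{q^m-1}{N}\mathbb{Z} \ \longrightarrow \ \mathbb{Z}/(q^m-1)\mathbb{Z},\qquad (i,j) \ \longmapsto \ Qi + Nj,
$$
whose image was just shown to have order $(q^m-1)/\gcd(Q,N)$. Since the domain has order $(q-1)(q^m-1)/N$, the fiber over any element of the image has cardinality
$$
\frac{(q-1)(q^m-1)/N}{(q^m-1)/\gcd(Q,N)} \ = \ \frac{(q-1)\gcd(Q,N)}{N}.
$$
Translating the exponent by $k$ (equivalently, multiplying by $\alpha^k$) is a bijection on $\mathbb{Z}/(q^m-1)\mathbb{Z}$, so every element of $C_k^{(\gcd(Q,N), q^m)}$ is hit exactly $(q-1)\gcd(Q,N)/N$ times, giving the claimed multiset equality.

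There is no real obstacle here: the entire content is the elementary observation that the image of $(i,j) \mapsto Qi + Nj$ in a cyclic group is the subgroup of index $\gcd(Q,N)$ of the image, together with the standard fact that a group homomorphism has fibers of constant size. The only thing to be careful about is that the total size accounting on both sides matches, which it does by the index/kernel count above.
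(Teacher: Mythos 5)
Your proof is correct. Note that the paper does not prove this statement at all --- it is imported verbatim as Lemma 5 of Ding--Yang \cite{ding} --- so there is no in-paper argument to compare against; your exponent-counting argument (writing $\F^*=\langle\alpha^Q\rangle$, observing that the exponents $Qi+Nj$ fill out the subgroup $\gcd(Q,N)\,\mathbb{Z}/(q^m-1)\mathbb{Z}$, and computing the constant fiber size of the homomorphism $(i,j)\mapsto Qi+Nj$) is a clean, self-contained justification and is essentially the standard proof of this fact.
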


A consequence to the above is the following. 

\begin{lem}\label{lem: ding2}
 Let $N \mid q^m-1$ and let $k \in \mathbb{Z}$. Then
 $$
 \sum_{i=0}^{q-2} \eta_{Qi + k}^{(N,q^m)} = \dfrac{(q-1)\gcd\left(Q, N\right)}{N} \eta_k^{\left(\gcd\left(Q, N\right), q^m  \right)}.
 $$
\end{lem}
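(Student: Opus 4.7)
The plan is to apply the canonical additive character $\chi_{q^m}$ to both sides of the multiset identity in Lemma \ref{lem: ding} and sum over the resulting elements, matching the left-hand side with the given sum of Gaussian periods and the right-hand side with the period on the right.

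First I would rewrite the multiset
$$
\left\{ ax \ : \ a \in \F^*,\ x \in C_k^{(N,q^m)}  \right\}
$$
in a form that exposes the cyclotomic classes appearing in the claimed sum. Since $\alpha$ is primitive in $\Fr$ and $Q = (q^m-1)/(q-1)$, the element $\alpha^Q$ is a primitive element of $\F^*$, so $\F^* = \{\alpha^{Qi} : 0 \le i \le q-2\}$. For each fixed $i$, as $x$ ranges over $C_k^{(N,q^m)} = \alpha^k \langle \alpha^N \rangle$, the product $ax$ with $a = \alpha^{Qi}$ runs through $\alpha^{Qi+k}\langle \alpha^N \rangle = C_{Qi+k}^{(N,q^m)}$. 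Hence, as $i$ ranges over $0, 1, \dots, q-2$, we obtain
$$
\left\{ ax \ : \ a \in \F^*,\ x \in C_k^{(N,q^m)}  \right\} \;=\; \bigsqcup_{i=0}^{q-2} C_{Qi+k}^{(N,q^m)}
$$
as a multiset (classes may repeat, but the total multiplicity agrees with Lemma \ref{lem: ding}).

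Next I would sum $\chi_{q^m}$ over both sides of the Ding--Yang multiset equality. On the left,
$$
\sum_{a \in \F^*}\sum_{x \in C_k^{(N,q^m)}} \chi_{q^m}(ax) \;=\; \sum_{i=0}^{q-2}\ \sum_{y \in C_{Qi+k}^{(N,q^m)}} \chi_{q^m}(y) \;=\; \sum_{i=0}^{q-2} \eta_{Qi+k}^{(N,q^m)},
$$
which is exactly the left-hand side of the target identity. On the right, by definition of the multiset from Lemma \ref{lem: ding},
$$
\sum \chi_{q^m} \;=\; \frac{(q-1)\gcd(Q,N)}{N} \sum_{y \in C_k^{(\gcd(Q,N), q^m)}} \chi_{q^m}(y) \;=\; \frac{(q-1)\gcd(Q,N)}{N}\, \eta_k^{(\gcd(Q,N), q^m)}.
$$
Equating the two expressions yields the claim.

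Since Lemma \ref{lem: ding} is quoted as an available tool, there is essentially no obstacle here; the lemma is a direct corollary, and the only slightly delicate point is being careful that the multiset equality, rather than a set equality, is what is being used, so that the constant multiplicity $(q-1)\gcd(Q,N)/N$ appears correctly on the right after summing the character.
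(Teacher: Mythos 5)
Your proof is correct and is essentially the same as the paper's: both arguments identify $\F^*$ with $\{\alpha^{Qi} : 0 \le i \le q-2\}$, recognize $\sum_{i}\eta_{Qi+k}^{(N,q^m)}$ as the sum of $\chi_{q^m}$ over the multiset $\{ax : a \in \F^*, x \in C_k^{(N,q^m)}\}$, and then apply the Ding--Yang multiset equality of Lemma \ref{lem: ding} to produce the multiplicity factor $(q-1)\gcd(Q,N)/N$ times $\eta_k^{(\gcd(Q,N),q^m)}$. The only difference is cosmetic: you start from the multiset identity and sum the character over both sides, whereas the paper starts from the left-hand sum of periods and transforms it into that character sum.
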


\begin{proof}
 By the definition of Gaussian periods and by Lemma \ref{lem: ding},
 \begin{align*}
  \sum_{i=0}^{q-2} \eta_{Qi + k}^{(N,q^m)} &= \sum_{i=0}^{q-2} \sum_{x \in C_0^{(N,q^m)}} \chi_{q^m} \left( \alpha^{Qi + k} x  \right) = \sum_{i=0}^{q-2} \sum_{x \in C_k^{(N,q^m)}}\chi_{q^m} \left( \alpha^{Qi } x  \right)\\
  &= \sum_{a \in \F^*} \sum_{x \in C_k^{(N,q^m)}}\chi_{q^m} \left( a x  \right)
  = \dfrac{(q-1)\gcd\left(Q, N\right)}{N} \sum_{x \in C_k^{(\gcd\left(Q, N\right), q^m)}} \chi_{q^m} (x)   \\
  &= \dfrac{(q-1)\gcd\left(Q, N\right)}{N} \eta_k^{(\gcd\left(Q, N\right), q^m)}.
 \end{align*}
\end{proof}

\begin{rmk}
 It is known that $\eta_k^{(N,q^m)} \in \mathbb{Z}$ whenever $N \mid Q$ (see Theorem 13 (i) in \cite{ding}).
\end{rmk}

The following results about Gaussian periods are well known and may be found for example
in \cite{ding}. We only give the $0$-th Gaussian periods as these will be of greater interest to us in the sections that follow.
For the other cases we refer the interested reader to \cite{ding}. First it is easy to show that $\eta_k^{(1,q^m)} = -1$ for any $k \in \mathbb{Z}$ and hence $\Delta_k(1) = -1$.

\begin{lem}\label{lem: period N = 2}
When $N=2$, the $0$-th Gaussian periods are given by the following:
\begin{align*}
\eta_0^{(2,q^m)} &= \begin{cases}
                    \frac{-1 + (-1)^{sm-1}q^{m/2}}{2} & \mbox{ if } p \equiv 1 \pmod{4};\\
                    \frac{-1 + (-1)^{sm-1} \left( \sqrt{-1}  \right)^{sm} q^{m/2}}{2} & \mbox{ if } p \equiv 3 \pmod{4}.
                  \end{cases}\\
          \end{align*}
\end{lem}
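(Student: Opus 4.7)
The plan is to reduce the computation of $\eta_0^{(2,q^m)}$ to the classical quadratic Gauss sum over $\mathbb{F}_{q^m}$. Note that in order for $N=2$ to divide $q^m-1$ we implicitly need $q$ (hence $p$) to be odd, so the quadratic multiplicative character $\psi_2$ of $\mathbb{F}_{q^m}$ is well-defined.

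First I would specialize the Gauss sum / Gauss period relation (\ref{eqn: periods and gauss sums}) to the case $N=2$, $k=0$. Since $\psi_2(\alpha^0) = \psi_2(1) = 1$, the only nontrivial term in the $j$-sum is $j=1$, and the expression collapses to
\begin{equation*}
\eta_0^{(2,q^m)} \;=\; \dfrac{1}{2}\Bigl(-1 + G_{q^m}(\psi_2)\Bigr).
\end{equation*}
Thus the entire problem reduces to evaluating the quadratic Gauss sum $G_{q^m}(\psi_2)$ attached to the canonical additive character of $\mathbb{F}_{q^m}$.

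Next I would invoke the classical evaluation of the quadratic Gauss sum over a finite field of odd characteristic (Theorem 5.15 in Lidl--Niederreiter). Since $q^m = p^{sm}$, with $sm$ playing the role of the degree over the prime field $\mathbb{F}_p$, the theorem gives
\begin{equation*}
G_{q^m}(\psi_2) \;=\; \begin{cases} (-1)^{sm-1} \, q^{m/2} & \mbox{if } p \equiv 1 \pmod{4},\\[2pt] (-1)^{sm-1} \bigl(\sqrt{-1}\bigr)^{sm} q^{m/2} & \mbox{if } p \equiv 3 \pmod{4}. \end{cases}
\end{equation*}

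Finally, substituting this evaluation into the previously derived expression $\eta_0^{(2,q^m)} = \tfrac{1}{2}(-1 + G_{q^m}(\psi_2))$ immediately yields the two stated closed forms. There is no real obstacle here beyond bookkeeping: the only thing to watch is the translation of conventions between the statement (which expresses the exponent of $-1$ as $sm-1$ inside the formula) and the statement of Lidl--Niederreiter's theorem, where the degree of the ambient field over the prime field plays the role of $s$. Matching these two uses of $s$ is the one point where care is needed, but after this identification the formulas agree verbatim.
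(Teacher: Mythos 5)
Your proposal is correct and is precisely the standard derivation underlying this lemma, which the paper itself does not prove but simply cites as well known (from Ding--Yang and ultimately from the quadratic Gauss sum evaluation in Lidl--Niederreiter, Theorem 5.15). The reduction $\eta_0^{(2,q^m)} = \tfrac{1}{2}\bigl(-1 + G_{q^m}(\psi_2)\bigr)$ via relation (\ref{eqn: periods and gauss sums}) and the degree bookkeeping ($[\Fr:\mathbb{F}_p] = sm$, so $(q^m)^{1/2} = q^{m/2}$) are both handled correctly.
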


In the case when $N = 3$ we only give a particular instance although the 
results in other cases are also known.  

\begin{lem}\label{lem: period N = 3}
 Let $N = 3$, let $sm \equiv 0 \pmod{3}$, let $p \equiv 1 \pmod{3}$, and let $c,d,$ be the unique (up to sign) solutions 
 to the equation $4p^{sm/3} = c^2 + 27 d^2$ with $c \equiv 1 \pmod{3}$ and $p \nmid c$.
 Then
 $$
 \eta_0^{(3,q^m)} = \dfrac{-1 + cq^{m/3}}{3}.
 $$
\end{lem}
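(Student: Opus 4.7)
The plan is to reduce the computation of $\eta_0^{(3,q^m)}$ to the evaluation of a classical cubic Gauss sum. Starting from equation~(\ref{eqn: periods and gauss sums}) specialized to $k=0$ and $N=3$, and noting that $\overline{\psi_3^j(\alpha^0)} = 1$ for each $j$,
$$\eta_0^{(3,q^m)} = \frac{1}{3}\left(-1 + G_{q^m}(\psi_3) + G_{q^m}(\psi_3^2)\right).$$
Since $\psi_3^2 = \overline{\psi_3}$ and $\psi_3(-1)$ is simultaneously a cube root of unity and a square root of unity (from $\psi_3(-1)^2 = \psi_3(1) = 1$), one has $\psi_3(-1)=1$. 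The standard identity $\overline{G(\chi)} = \chi(-1)\,G(\bar{\chi})$ then gives $G_{q^m}(\psi_3^2) = \overline{G_{q^m}(\psi_3)}$, so
$$\eta_0^{(3,q^m)} = \frac{1}{3}\left(-1 + 2\,\operatorname{Re} G_{q^m}(\psi_3)\right).$$
It thus suffices to prove $2\,\operatorname{Re} G_{q^m}(\psi_3) = cq^{m/3}$.

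Because $3 \mid sm$, set $t := sm/3$ and view $\mathbb{F}_{q^m} = \mathbb{F}_{p^{sm}}$ as a degree-$3$ extension of $\mathbb{F}_{p^t}$. Since $p \equiv 1 \pmod{3}$, a cubic character $\chi_3$ exists on $\mathbb{F}_{p^t}^*$; choose $\psi_3$ to be its norm-lift $\chi_3 \circ N_{\mathbb{F}_{q^m}/\mathbb{F}_{p^t}}$. The Davenport--Hasse theorem applied to this degree-$3$ extension yields
$$G_{q^m}(\psi_3) = (-1)^{3-1}\,G_{p^t}(\chi_3)^3 = G_{p^t}(\chi_3)^3,$$
so the question reduces to evaluating $\operatorname{Re} G_{p^t}(\chi_3)^3$.

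Finally, one invokes the classical Gauss evaluation of the cubic Gauss sum: $G_{p^t}(\chi_3) = a + b\omega \in \mathbb{Z}[\omega]$ (where $\omega = e^{2\pi i/3}$) with $|a+b\omega|^2 = a^2 - ab + b^2 = p^t = q^{m/3}$. Expanding the cube using $\omega^2 = -1-\omega$ and taking real parts (via $\operatorname{Re}\omega = -1/2$) produces an integer of the form $\tfrac{1}{2}\,c\,p^t$ where the pair $(c,d)$ satisfies $4p^t = c^2 + 27d^2$. To pin down the normalization $c \equiv 1 \pmod{3}$ and $p \nmid c$, one uses Stickelberger's congruence $G_{p^t}(\chi_3) \equiv -1 \pmod{(1-\omega)}$, which upon cubing gives $G_{p^t}(\chi_3)^3 \equiv -1 \pmod{3}$; non-divisibility by $p$ follows from $p \nmid G_{p^t}(\chi_3)$ in $\mathbb{Z}[\omega]$. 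Substituting back gives the claimed formula.

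The principal obstacle is the algebraic-number-theoretic bookkeeping: matching the coefficients $(a,b)$ arising from the expression of $G_{p^t}(\chi_3)$ in $\mathbb{Z}[\omega]$ to the normalized pair $(c,d)$ of the representation $4p^t = c^2 + 27d^2$, and tracking signs consistently. This is classical and is the route taken in \cite{ding}; once Stickelberger's congruence is in hand, the normalization of $c$ modulo $3$ is forced, and the expansion of $(a+b\omega)^3$ is a routine computation.
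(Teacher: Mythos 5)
The paper does not actually prove this lemma; it quotes it from \cite{ding}, so your proposal must stand on its own. Your reduction is the standard one and is sound as far as it goes: relation (2) with $k=0$ gives $\eta_0^{(3,q^m)} = \frac{1}{3}\bigl(-1 + G_{q^m}(\psi_3) + \overline{G_{q^m}(\psi_3)}\bigr)$ (using $\psi_3(-1)=1$), and since $3 \mid sm$ you may take $\psi_3$ to be the norm-lift of a cubic character $\chi_3$ of $\mathbb{F}_{p^t}$, $t = sm/3$, so that Davenport--Hasse yields $G_{q^m}(\psi_3) = G_{p^t}(\chi_3)^3$ and everything hinges on $2\operatorname{Re}\bigl(G_{p^t}(\chi_3)^3\bigr)$.

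The final step, however, contains a genuine error. You assert that $G_{p^t}(\chi_3) = a + b\omega \in \mathbb{Z}[\omega]$ and propose to cube this element. The cubic Gauss sum is \emph{not} an element of $\mathbb{Z}[\omega]$ --- it lies in $\mathbb{Z}[\omega,\zeta_p]$, and pinning down the sum itself (as opposed to its cube) is precisely Kummer's problem. Moreover, even granting an element $u = a + b\omega$ with $u\overline{u} = p^t$, the computation does not close: $u^3 + \overline{u}^3 = (u+\overline{u})^3 - 3p^t(u+\overline{u})$, which produces a representation of $4p^{3t}$, not the quantity $cp^t$ with $4p^t = c^2 + 27d^2$ that you need. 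The correct ingredient is the Gauss--Jacobi relation $G_{p^t}(\chi_3)^3 = p^t\, J(\chi_3,\chi_3)$, where the Jacobi sum $J(\chi_3,\chi_3) = a + b\omega$ genuinely lies in $\mathbb{Z}[\omega]$, has norm $a^2 - ab + b^2 = p^t$, and satisfies $J \equiv -1 \pmod 3$; this congruence (rather than Stickelberger applied to $G$ itself) forces $a \equiv -1$, $b \equiv 0 \pmod 3$, whence $G^3 + \overline{G}^3 = p^t(J + \overline{J}) = p^t(2a-b) = cp^t$ with $c = 2a - b \equiv 1 \pmod 3$ and $4p^t = c^2 + 27(b/3)^2$. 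Finally $p \nmid c$ because, by Stickelberger's factorization, only one of the two primes of $\mathbb{Z}[\omega]$ above $p$ divides $J$, so $p \nmid J$ and hence $p \nmid J + \overline{J}$. With the Jacobi sum substituted for your $a+b\omega$, the argument is correct and completes the proof.
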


The Gaussian periods in the so called {\em semi-primitive} case are known as well
and are given in the following lemma. See \cite{ding}.

\begin{lem}\label{lem: semiprimitive}
 Assume that $N > 2$ and there exists a positive integer $j$ such that $p^j \equiv -1 \pmod{N}$
 and that $j$ is the least such. Let $r = p^{2j \gamma}$ for some integer $\gamma$.
 
 (a) If $\gamma, p$ and $(p^j+1)/N$ are all odd, then
 \begin{align*}
  \eta_{0}^{(N,r)} = -\dfrac{r^{1/2} + 1}{N}.
 \end{align*}
 
 (b) In all other cases,
 $$
 \eta_0^{(N,r)} = \dfrac{(-1)^{\gamma + 1}(N-1)r^{1/2} - 1}{N}.
 $$
\end{lem}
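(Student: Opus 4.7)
The plan is to start from the discrete Fourier identity (\ref{eqn: periods and gauss sums}) specialized at $k=0$, which reads
$$
\eta_0^{(N,r)} \;=\; \frac{1}{N}\left(-1 + \sum_{k=1}^{N-1} G_r(\psi_N^k)\right),
$$
so the task reduces to evaluating every non-trivial Gauss sum $G_r(\psi_N^k)$ in the semi-primitive regime.

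For each such $k$, let $d := N/\gcd(N,k)$ denote the order of $\chi := \psi_N^k$. Since $p^j \equiv -1 \pmod{N}$ and $d \mid N$, we also have $p^j \equiv -1 \pmod{d}$, i.e., raising to the $p^j$-th power acts as inversion on the group of $d$-th roots of unity in $\mathbb{F}_r^\ast$. Because the canonical additive character of $\mathbb{F}_r$ is invariant under Frobenius, substituting $x \mapsto x^{p^j}$ in the defining sum of $G_r(\chi)$ yields $G_r(\chi) = G_r(\overline{\chi})$; combined with the standard identity $G_r(\overline{\chi}) = \chi(-1)\overline{G_r(\chi)}$ and $|G_r(\chi)|^2 = r$, this forces $G_r(\chi)$ to equal $\pm r^{1/2}$ when $\chi(-1)=1$ and $\pm i r^{1/2}$ when $\chi(-1)=-1$. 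Only the overall sign remains to be pinned down.

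To fix the sign I would invoke Davenport--Hasse to write $\chi = \chi_0 \circ N_{\mathbb{F}_r/\mathbb{F}_{p^{2j_0}}}$, where $j_0 \mid j$ is the minimal positive integer with $p^{j_0} \equiv -1 \pmod d$ and $\chi_0$ is a character of order $d$ on the small subfield $\mathbb{F}_{p^{2j_0}}$. Setting $\mu = [\mathbb{F}_r : \mathbb{F}_{p^{2j_0}}] = j\gamma/j_0$, the lifting theorem produces
$$
G_r(\chi) \;=\; (-1)^{\mu - 1}\bigl(G_{p^{2j_0}}(\chi_0)\bigr)^\mu.
$$
On the small field, a direct coset-pairing argument---splitting $\mathbb{F}_{p^{2j_0}}^\ast$ into cosets of $\ker\chi_0$ and matching each with its Frobenius image, which is its inverse---evaluates $G_{p^{2j_0}}(\chi_0) = p^{j_0}$ in the generic situation, and $G_{p^{2j_0}}(\chi_0) = -p^{j_0}$ exactly in the classical exceptional sub-case $d=2$, $p$ odd, $(p^{j_0}+1)/2$ odd, a sub-case whose value is already implicit in Lemma \ref{lem: period N = 2}.

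Finally I would carry out the two-case assembly. In case (b), a short parity check shows $\mu \equiv \gamma \pmod 2$ for every divisor $d$ of $N$ with $d > 1$, and no character meets the exceptional sub-case above, so each of the $N-1$ non-trivial Gauss sums equals $(-1)^{\gamma+1} r^{1/2}$; summing and inserting into the DFT identity gives the claimed formula. In case (a), the hypotheses that $\gamma$, $p$, and $(p^j+1)/N$ are all odd force $N$ to be even and place exactly the unique quadratic character $\psi_N^{N/2}$ into the exceptional sub-case, so its Gauss sum flips sign, and a brief reconciliation collapses the total $\sum_{k=1}^{N-1} G_r(\psi_N^k)$ to $-r^{1/2}$, yielding $\eta_0^{(N,r)} = -(r^{1/2}+1)/N$. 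The principal bookkeeping challenge will be tracking how the parities of $\mu$, $j/j_0$, and $(p^{j_0}+1)/d$ vary with $d$, and verifying that the three conjoint conditions in (a) isolate precisely the quadratic contribution as the sole exceptional one.
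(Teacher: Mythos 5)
First, note that the paper does not prove this lemma at all: it is quoted from Ding--Yang (ultimately the Baumert--McEliece evaluation of semi-primitive Gauss sums), so there is no internal proof to compare against. Your overall skeleton --- expand $\eta_0^{(N,r)}$ by the DFT identity (\ref{eqn: periods and gauss sums}), observe that $x\mapsto x^{p^j}$ forces each nontrivial $G_r(\psi_N^k)$ to be $\pm r^{1/2}$, and then pin the sign by Davenport--Hasse descent to the minimal semi-primitive subfield $\mathbb{F}_{p^{2j_0}}$ --- is indeed the standard route, and your parity observations ($j/j_0$ odd, hence $\mu\equiv\gamma\pmod 2$) are correct.

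The gap is in the sign determination on the base field, which is precisely the hard content of the theorem. Your ``coset-pairing'' argument only shows that $G_{p^{2j_0}}(\chi_0)$ is real of absolute value $p^{j_0}$; it cannot decide the sign, and the sign rule you then assert (namely $+p^{j_0}$ always, except for the quadratic character with $(p^{j_0}+1)/2$ odd) is false. The correct rule for odd $p$ is $G_{p^{2j_0}}(\chi_0)=(-1)^{(p^{j_0}+1)/d}\,p^{j_0}$, so any order $d>2$ with $v_2(d)=v_2(p^{j_0}+1)$ is ``exceptional'' as well. A concrete counterexample to your version sits squarely inside case (a): take $p=3$, $N=4$, $j=1$, $\gamma=1$, $r=9$. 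Direct computation (or Davenport--Hasse from $g=i\sqrt{3}$, together with Lemma \ref{lem: period N = 2}) gives $G_9(\psi_2)=+3$ but $G_9(\psi_4)=G_9(\psi_4^3)=-3$, whence $\eta_0^{(4,9)}=\frac{-1+3-3-3}{4}=-1=-\frac{r^{1/2}+1}{N}$ as the lemma claims; your rule predicts all three Gauss sums equal $+3$ and hence $\eta_0^{(4,9)}=2$. So in case (a) it is the characters of order $d$ with $(p^{j_0(d)}+1)/d$ odd (not the quadratic one) that flip sign, and the final bookkeeping $\sum_{1<d\mid N}\phi(d)\,(-1)^{(p^{j_0(d)}+1)/d}=-1$ under the hypotheses of (a) still has to be carried out. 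To repair the proof you must either import the Baumert--McEliece sign evaluation as a black box (at which point the lemma is essentially being cited rather than proved) or supply a genuine proof of that sign, e.g.\ via Stickelberger's congruence; the elementary pairing argument does not suffice.
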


The following lemma will be useful as well.

\begin{lem}\label{lem: period Q}
 Let $q$ be a power of a prime $p$ and let $Q = (q^m-1)/(q-1)$. Then
 $$
 \eta_0^{(Q, q^m)} = \begin{cases}
                    -1 & \mbox{ if } p \nmid m;\\
                    q-1 & \mbox{ otherwise.}
                   \end{cases}
$$
\end{lem}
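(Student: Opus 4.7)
The plan is to unwrap the definition of $\eta_0^{(Q,q^m)}$ and recognize that the cyclotomic class $C_0^{(Q,q^m)}$ is precisely $\F^*$, reducing the Gaussian period to a trace-character sum over $\F^*$.

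First I would note that $C_0^{(Q,q^m)} = \langle \alpha^Q\rangle$ is a cyclic subgroup of $\Fr^*$ whose order equals $(q^m-1)/Q = q-1$, and since $\Fr^*$ contains a unique subgroup of order $q-1$, namely $\F^*$, we get $C_0^{(Q,q^m)} = \F^*$. Therefore
$$
\eta_0^{(Q,q^m)} = \sum_{x\in\F^*} \chi_{q^m}(x) = \sum_{x\in\F^*}\chi_q\bigl(\Tmq(x)\bigr).
$$
Next I would use the fact that for $x \in \F$ one has $\Tmq(x) = mx$ (where $m$ is reduced modulo $p$ and then viewed in $\F$), so the sum becomes $\sum_{x \in \F^*}\chi_q(mx)$.

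The proof then splits into the two cases according to whether $p \mid m$. If $p \nmid m$, then $m$ is a nonzero element of $\F$, so $x \mapsto mx$ permutes $\F^*$ and the orthogonality relation (\ref{eqn: orthogonality}) gives $\sum_{y \in \F^*}\chi_q(y) = -1$. If $p \mid m$, then $mx = 0$ for every $x \in \F$, hence $\chi_q(mx) = 1$ and the sum equals $|\F^*| = q-1$. Both cases together yield the stated formula.

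There is essentially no obstacle here; the only point that deserves care is the identification $C_0^{(Q,q^m)} = \F^*$, which rests on the uniqueness of a subgroup of given order in the cyclic group $\Fr^*$, and the computation $\Tmq(x) = mx$ for $x \in \F$, which comes directly from the definition $\Tmq(x) = \sum_{i=0}^{m-1} x^{q^i} = mx$ when $x \in \F$.
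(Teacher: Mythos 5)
Your proof is correct and follows exactly the paper's own argument: identify $C_0^{(Q,q^m)}=\langle\alpha^Q\rangle$ with $\F^*$, rewrite the period as $\sum_{x\in\F^*}\chi_q(mx)$ via transitivity of the trace, and finish with the orthogonality relation (\ref{eqn: orthogonality}). The only difference is that you spell out the two cases explicitly, which the paper leaves to the reader.
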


\begin{proof}
 By definition,
 $$
  \eta_0^{(Q,q^m)} = \sum_{x \in \left< \alpha^Q \right>} \chi_{q^m}(x) = \sum_{x \in \F^*} \chi_{q^m}(x) = \sum_{x \in \F^*} \chi_q\left(\Tmq(x)  \right) = \sum_{x \in \F^*}\chi_q(mx).
$$
Now the result follows by (\ref{eqn: orthogonality}).
\end{proof}

\section{A formula for $Z_{q,m, N}$}

In this section we derive, in terms of Gaussian periods, a formula for the number of $N$-free elements with prescribed trace (see Lemma \ref{lem: Z formula}). 
Note that Cohen and Pre\u{s}ern \cite{cohen and presern (2005)} already did so in terms of Gaussian sums (see their Lemma 2.2).
However by the means of Gaussian periods we will be able to apply Ding-Yang lemmas (Lemma \ref{lem: ding} and \ref{lem: ding2}) thus obtaining, 
for the case of the zero trace,
the simplified version of Theorem \ref{lem: zero trace} and the fact that $Z_{q,m,N}(0) = \Theta(K) Z_{q,m,\gcd(Q,N)}(0)$ already mentioned in the introduction. Recall that here
$K$ is the part of $N$ that is coprime to $Q$,
and $\Theta(K) = \phi(K)/ K$. 

The following characteristic function for $N$-free elements, due to 
Vinogradov, is typically used in works on the topic. See for instance 
\cite{cohen0, cohen survey, cohen and presern (2005), cohen and presern (2008)} and the references therein.

\begin{prop}[{\bf Vinogradov}]
 Let $N$ be a positive divisor of $q^m-1$ and let $\xi \in \Fr^*$. Then 
 $$
 \dfrac{\phi(N)}{N}\sum_{d \mid N} \dfrac{\mu(d)}{\phi(d)} \sum_{\ord(\psi) = d} \psi\left( \xi \right) = \begin{cases}
                                                                                                          1 & \mbox{ if } \xi \mbox{ is $N$-free};\\
                                                                                                          0 & \mbox{ otherwise,}
                                                                                                         \end{cases}
                                                                                                         $$
where in the inner sum $\psi$ runs through all the multiplicative characters of $\Fr$ with order $d$.
\end{prop}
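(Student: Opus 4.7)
The plan is to derive Vinogradov's formula in three steps: first reduce $N$-freeness to an inclusion--exclusion over the primes dividing $N$, then introduce multiplicative characters via orthogonality, and finally collapse the resulting double sum using standard M\"obius and Euler arithmetic.

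Writing $\xi = \alpha^k$ for a fixed primitive element $\alpha$ of $\Fr$, the definition recalled in the paper says $\xi$ is $N$-free if and only if $\gcd(k,N) = 1$, equivalently if and only if $\xi$ fails to lie in $(\Fr^*)^{\ell}$ for every prime $\ell \mid N$. I would expand
$$
\prod_{\substack{\ell \mid N \\ \ell \text{ prime}}} \bigl(1 - \mathbf{1}[\xi \in (\Fr^*)^{\ell}]\bigr),
$$
use that $\mathbf{1}[\xi \in (\Fr^*)^{\ell_1}] \cdots \mathbf{1}[\xi \in (\Fr^*)^{\ell_r}] = \mathbf{1}[\xi \in (\Fr^*)^{\ell_1 \cdots \ell_r}]$ for distinct primes $\ell_1,\ldots,\ell_r$ (a consequence of the cyclicity of $\Fr^*$), and read off the inclusion--exclusion
$$
\mathbf{1}[\xi \text{ is } N\text{-free}] = \sum_{d \mid \rad(N)} \mu(d)\, \mathbf{1}[\xi \in (\Fr^*)^d].
$$

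Next, orthogonality of multiplicative characters of the cyclic group $\Fr^*$ gives $\mathbf{1}[\xi \in (\Fr^*)^d] = \tfrac{1}{d}\sum_{\psi^d = 1}\psi(\xi)$, and sorting the characters with $\psi^d = 1$ by their exact orders yields $\sum_{\psi^d=1}\psi(\xi) = \sum_{e \mid d}\sum_{\ord(\psi) = e}\psi(\xi)$. Substituting into the previous display and interchanging the sums over $e$ and $d$ (with $e \mid d \mid \rad(N)$) rewrites the right-hand side as
$$
\sum_{e \mid \rad(N)} \Bigl(\sum_{\ord(\psi) = e}\psi(\xi)\Bigr) \sum_{\substack{d \mid \rad(N) \\ e \mid d}} \frac{\mu(d)}{d}.
$$

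The last step is purely arithmetic: writing $d = ef$ with $f \mid \rad(N)/e$, multiplicativity of $\mu$ on coprime arguments together with the identity $\sum_{f \mid M}\mu(f)/f = \phi(M)/M$ for squarefree $M$ reduce the inner sum to $(\mu(e)/e)\cdot \phi(\rad(N)/e)/(\rad(N)/e)$; multiplicativity of $\phi(n)/n$ and the equality $\phi(\rad(N))/\rad(N) = \phi(N)/N$ then collapse this to $\mu(e)\phi(N)/(N\phi(e))$. Extending the outer sum from $e \mid \rad(N)$ to all $e \mid N$ is free, since $\mu$ vanishes on non-squarefree integers, and the resulting identity is exactly Vinogradov's formula. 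I expect no serious obstacle; the only genuinely content-laden steps are the inclusion--exclusion of the first paragraph and the orthogonality of the second, while the final paragraph is routine bookkeeping with $\mu$ and $\phi$.
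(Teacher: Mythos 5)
Your argument is correct. Note that the paper itself states Vinogradov's formula without proof (it is quoted as a standard result, with references), so there is no "paper proof" to match; what the paper actually proves is the neighbouring Lemma~\ref{lem: characteristic}, a variant of the characteristic function in which, for each $d \mid N$, one fixed character $\psi_d$ of order $d$ is raised to all powers $0,\dots,d-1$. That proof is a one-line computation: the geometric sum $\frac{1}{d}\sum_{j=0}^{d-1} e^{2\pi\sqrt{-1}jk/d}$ detects $d \mid k$ directly, and then $\sum_{d \mid \gcd(N,k)}\mu(d)$ finishes. Your route to the Vinogradov version is the natural counterpart: you detect membership in $(\Fr^*)^{\ell}$ by inclusion--exclusion over the primes $\ell \mid N$, convert each indicator into an average over the full group of characters killing $(\Fr^*)^d$, regroup by exact order, and then the M\"obius/Euler bookkeeping produces exactly the weights $\frac{\phi(N)}{N}\cdot\frac{\mu(d)}{\phi(d)}$ (the factor $\phi(d)$ compensating for the $\phi(d)$ characters of exact order $d$, versus the single fixed character in the paper's variant). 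All the individual steps check out: the identity $\mathbf{1}[\xi \in (\Fr^*)^{\ell_1}]\cdots\mathbf{1}[\xi \in (\Fr^*)^{\ell_r}] = \mathbf{1}[\xi \in (\Fr^*)^{\ell_1\cdots\ell_r}]$ is valid because $\Fr^*$ is cyclic and $\ell_1\cdots\ell_r \mid q^m-1$; the inner sum $\sum_{e \mid d \mid \rad(N)}\mu(d)/d$ factors correctly since $e$ and $\rad(N)/e$ are coprime; and extending from $e \mid \rad(N)$ to $e \mid N$ costs nothing because $\mu$ vanishes off squarefree integers. Either formulation would serve the paper's purposes equally well.
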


We will however consider the following apparently simpler form of the characteristic function.

\begin{lem}\label{lem: characteristic}
 Let $N$ be a positive divisor of $q^m-1$ and let $\xi \in \Fr^*$. For each positive divisor $d$ of $N$, fix a multiplicative character $\psi_d$  
 of $\Fr$ with order $d$. Then
 $$
 \sum_{d \mid N} \dfrac{\mu(d)}{d} \sum_{j=0}^{d-1} \psi_d^j \left(  \xi \right) = \begin{cases}
                                                                                                          1 & \mbox{ if } \xi \mbox{ is $N$-free};\\
                                                                                                          0 & \mbox{ otherwise.}
                                                                                   \end{cases}
$$                                                                                                        

\begin{proof}
 Let $\alpha$ be primitive in $\Fr$. Then $\xi = \alpha^k$ for some integer $k$. Note that
 $$
 \dfrac{1}{d}\sum_{j=0}^{d-1} e^{2\pi\sqrt{-1} jk/d} = \begin{cases}
                                            1 & \mbox{ if } d \mid k;\\
                                            0 & \mbox{ otherwise}.
                                           \end{cases}
$$
Hence
\begin{align*}
 \sum_{d \mid N} \dfrac{\mu(d)}{d} \sum_{j=0}^{d-1} \psi_d^j \left(  \alpha^k \right) &= \sum_{d \mid N} \dfrac{\mu(d)}{d} \sum_{j=0}^{d-1} e^{2\pi\sqrt{-1} jk/d} = \sum_{d \mid \gcd(N,k)} \mu(d) = \begin{cases}
                                                                                                                                                                                                       1 & \mbox{ if } \gcd(N,k) = 1;\\
                                                                                                                                                                                                      0 & \mbox{ otherwise.}
                                                                                                                                                                                                      \end{cases}
                                                                                                                                                                                                      \end{align*}
                                                                                                                                                                                                      \end{proof}
\end{lem}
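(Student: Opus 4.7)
The plan is to reduce the identity to the classical Möbius relation $\sum_{d\mid n}\mu(d)=[n=1]$ via character orthogonality. First, I would fix a primitive element $\alpha$ of $\Fr$ and write $\xi=\alpha^k$ for some integer $k$ determined modulo $q^m-1$. Recall from the notations in the introduction that $\xi$ is $N$-free if and only if $\gcd(N,k)=1$, so the task reduces to showing that the displayed double sum equals the indicator of this coprimality.

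Next, I would analyze the inner sum $\sum_{j=0}^{d-1}\psi_d^j(\xi)$ for each $d\mid N$. Since $\psi_d$ has exact order $d$, we may write $\psi_d(\alpha)=e^{2\pi\sqrt{-1}\,j_0/d}$ for some integer $j_0$ coprime to $d$, giving $\psi_d^j(\xi)=e^{2\pi\sqrt{-1}\,j_0jk/d}$. This is a geometric sum over $j=0,\dots,d-1$ that evaluates to $d$ when $d\mid j_0k$ and to $0$ otherwise; since $\gcd(j_0,d)=1$, the condition $d\mid j_0k$ is equivalent to $d\mid k$. In particular, the value of the inner sum is independent of the specific character $\psi_d$ of order $d$ that one fixes.

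Substituting into the outer sum then yields
\[
\sum_{d\mid N}\frac{\mu(d)}{d}\sum_{j=0}^{d-1}\psi_d^j(\xi)
\;=\;\sum_{\substack{d\mid N\\ d\mid k}}\mu(d)
\;=\;\sum_{d\mid\gcd(N,k)}\mu(d),
\]
which equals $1$ when $\gcd(N,k)=1$ and $0$ otherwise, by the classical Möbius identity. I do not anticipate a real obstacle: the only observation that requires care is that the specific choice of order-$d$ character $\psi_d$ does not affect the inner sum, because only the vanishing condition $d\mid k$ matters for a geometric progression of $d$-th roots of unity.
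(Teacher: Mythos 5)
Your proposal is correct and follows essentially the same route as the paper: write $\xi=\alpha^k$, evaluate the inner geometric sum of $d$-th roots of unity to get the indicator of $d\mid k$, and conclude via $\sum_{d\mid\gcd(N,k)}\mu(d)$. Your extra care in tracking the integer $j_0$ with $\gcd(j_0,d)=1$ (so that $d\mid j_0k$ iff $d\mid k$) is a small refinement the paper glosses over, but the argument is the same.
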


Recall that for a positive divisor $N$ of $q^m-1$, an integer $k$, and $c \in \F$, we denote
\begin{align*}
\Delta_k(N) &= \sum_{d \mid N} \mu(d) \eta_k^{(d,q^m)}.
\end{align*}

The following proposition highlights some of the basic properties of $\Delta_k$.

\begin{prop}\label{prop: identities}
 Let $N \mid q^m-1$ and let $k \in \mathbb{Z}$. Then we have the following three identities:
 \begin{align*}
  \Delta_{Nk}(N) &= \Delta_0(N);\\
  \sum_{i=0}^{N-1} \Delta_i(N) &= -\phi(N); \text{ and}\\
   \Delta_k(N) &= \sum_{\substack{i=1 \\ \gcd(N, i-k) = 1}}^{q^m-1} \chi_{q^m} \left( \alpha^i \right).
 \end{align*}
 \end{prop}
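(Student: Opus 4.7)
My plan is to prove the three identities in order, with each following from a short direct manipulation of the definition of $\Delta_k(N)$.

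For identity (1), I would observe that for any divisor $d$ of $N$ one has $d \mid Nk$, so $\alpha^{Nk} \in \langle \alpha^d \rangle = C_0^{(d,q^m)}$; hence the cosets $C_{Nk}^{(d,q^m)}$ and $C_0^{(d,q^m)}$ coincide and $\eta_{Nk}^{(d,q^m)} = \eta_0^{(d,q^m)}$. Summing with weights $\mu(d)$ over $d \mid N$ gives $\Delta_{Nk}(N) = \Delta_0(N)$ immediately.

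For identity (2), I would interchange the two sums to get $\sum_{i=0}^{N-1}\Delta_i(N) = \sum_{d \mid N}\mu(d)\sum_{i=0}^{N-1}\eta_i^{(d,q^m)}$. Since $\eta_i^{(d,q^m)}$ only depends on $i$ modulo $d$ and the cyclotomic classes $C_0^{(d,q^m)},\dots,C_{d-1}^{(d,q^m)}$ partition $\Fr^*$, the inner sum telescopes into $\frac{N}{d}\sum_{x \in \Fr^*}\chi_{q^m}(x) = -N/d$. Plugging back in gives $-N\sum_{d \mid N}\mu(d)/d = -\phi(N)$ by the standard Möbius identity for $\phi$.

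For identity (3), the strategy is Möbius inversion at the exponent level. Expanding the definition yields
\[
\Delta_k(N) = \sum_{d \mid N}\mu(d)\sum_{x \in C_k^{(d,q^m)}}\chi_{q^m}(x) = \sum_{d \mid N}\mu(d)\sum_{\substack{j=0 \\ d \mid j}}^{q^m-2}\chi_{q^m}\bigl(\alpha^{k+j}\bigr).
\]
Swapping the order of summation and using $\sum_{d \mid \gcd(N,j)}\mu(d) = \mathbf{1}[\gcd(N,j)=1]$, this collapses to $\sum_{0 \le j \le q^m-2,\ \gcd(N,j)=1}\chi_{q^m}(\alpha^{k+j})$. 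The final reindexing $i \equiv k+j \pmod{q^m-1}$ (with representatives in $\{1,\dots,q^m-1\}$) converts $\gcd(N,j)=1$ into $\gcd(N,i-k)=1$, using that $N \mid q^m-1$ so the gcd is invariant modulo $q^m-1$. This yields the stated form.

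The only delicate step is the reindexing in identity (3), where one must keep careful track of the range of $i$ (to include $i = q^m-1$ rather than $i=0$) and use $N \mid q^m-1$ to ensure that shifting by $q^m-1$ does not change the coprimality condition. Otherwise everything is routine manipulation.
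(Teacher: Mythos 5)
Your proof is correct. The arguments for the first two identities coincide with the paper's: periodicity of $\eta_{\cdot}^{(d,q^m)}$ modulo $d$ (hence modulo $N$ after weighting by $\mu(d)$), and the computation $\sum_{i=0}^{N-1}\eta_i^{(d,q^m)}=\tfrac{N}{d}\sum_{x\in\Fr^*}\chi_{q^m}(x)=-N/d$ followed by $-N\sum_{d\mid N}\mu(d)/d=-\phi(N)$. For the third identity you take a mildly but genuinely different route: the paper passes through the Fourier relation $\eta_k^{(d,q^m)}=\tfrac{1}{d}\sum_{j}\overline{\psi_d^j(\alpha^k)}G_{q^m}(\psi_d^j)$, expands the Gaussian sums, and invokes the character-sum characteristic function of Lemma \ref{lem: characteristic} to collapse to the coprimality condition; you instead expand $\eta_k^{(d,q^m)}$ directly as $\sum_{d\mid j}\chi_{q^m}(\alpha^{k+j})$ over the cyclotomic class and apply the elementary identity $\sum_{d\mid\gcd(N,j)}\mu(d)=\mathbf{1}[\gcd(N,j)=1]$ at the level of exponents. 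The two computations are structurally parallel (the character sum in Lemma \ref{lem: characteristic} is precisely the divisibility detector you use directly), but your version avoids multiplicative characters and Gaussian sums altogether, at the price of the reindexing $i\equiv k+j\pmod{q^m-1}$, which you correctly justify using $N\mid q^m-1$ so that $\gcd(N,i-k)$ is well defined modulo $q^m-1$.
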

 
 \begin{proof}
 The first identity follows from the fact that the sequence of periods of type $(N,q^m)$ has period $N$, i.e., $\eta_{k}^{(N,q^m)} = \eta_0^{(N,q^m)}$ whenever $k \equiv 0 \pmod{N}$. 
  To prove the second identity, note that for each positive divisor $d$ of $N$,
  $$
  \sum_{i=0}^{N-1} \eta_i^{(d,q^m)} = \dfrac{N}{d}\sum_{i=0}^{d-1} \eta_i^{(d,q^m)}  = \dfrac{N}{d} \sum_{x \in \Fr^*} \chi_{q^m}(x) = -\dfrac{N}{d}.
  $$
  Hence
  $$
  \sum_{i=0}^{N-1} \Delta_i(N) = \sum_{d \mid N} \mu(d) \sum_{i=0}^{N-1} \eta_i^{(d,q^m)} = -\sum_{d \mid N} \mu(d)\dfrac{N}{d} = -\phi(N).
  $$
  For the last identity, by (\ref{eqn: periods and gauss sums}) and Lemma \ref{lem: characteristic}, we have
  \begin{align*}
   \Delta_k(N) &= \sum_{d \mid N} \mu(d) \eta_k^{(d,q^m)} = \sum_{d \mid N} \dfrac{\mu(d)}{d} \sum_{j=0}^{d-1} \overline{\psi_d^j \left( \alpha^k \right)} G_{q^m} \left( \psi_d^j \right) \\
   &=
   \sum_{d \mid N}\dfrac{\mu(d)}{d} \sum_{j=0}^{d-1} \overline{\psi_d^j \left( \alpha^k \right)} \sum_{i=1}^{q^m-1} \chi_{q^m}\left(  \alpha^i \right) \psi_d^j\left( \alpha^{i}  \right)\\
   &=
   \sum_{i=1}^{q^m-1} \chi_{q^m}\left(  \alpha^i \right) \sum_{d \mid N}\dfrac{\mu(d)}{d} \sum_{j=0}^{d-1}\psi_d^j \left( \alpha^{i-k} \right)\\
   &=
   \sum_{\substack{i=1 \\ \gcd(N, i-k) = 1}}^{q^m-1} \chi_{q^m}\left(  \alpha^i \right).
  \end{align*}
\end{proof}

For the sake of brevity let us also fix the following notation for the remaining of the paper.
\begin{align*}
f_k(N, c, \Delta) &:= \sum_{i=0}^{q-2}\overline{\chi_q\left( \alpha^{Qi}c \right)} \Delta_{Qi + k}(N).
\end{align*}

Now we give the general formula for $Z_{q,m,N}(c)$.

\begin{lem}\label{lem: Z formula}
 Let $N$ be a positive divisor of $q^m-1$ and let $c$ be an arbitrary element of $\F$. Then 
 \begin{align*}
 Z_{q,m, N}(c) 
&= \dfrac{1}{q}\left( \dfrac{q^m-1}{N} \phi(N) + f_0(N, c, \Delta)  \right).
 \end{align*}
\end{lem}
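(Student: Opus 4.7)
The plan is to insert two orthogonality-type characteristic functions into the defining sum for $Z_{q,m,N}(c)$. For the trace condition $\Tmq(\xi)=c$, the additive orthogonality relation (\ref{eqn: orthogonality}) combined with the identity $\chi_{q^m}(a\xi)=\chi_q(a\Tmq(\xi))$ (valid for $a\in\F$, by the transitivity of the trace) provides the indicator. For $N$-freeness, I would use Lemma \ref{lem: characteristic}. This yields
$$Z_{q,m,N}(c)=\sum_{\xi\in\Fr^*}\left(\frac{1}{q}\sum_{a\in\F}\chi_{q^m}(a\xi)\overline{\chi_q(ac)}\right)\left(\sum_{d\mid N}\frac{\mu(d)}{d}\sum_{j=0}^{d-1}\psi_d^j(\xi)\right).$$

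Next, I would interchange the order of summation and split the outer sum over $a$ into the term $a=0$ and the remaining terms $a\in\F^*$. The $a=0$ contribution is exactly $1/q$ times the total count of $N$-free elements in $\Fr^*$ (by Lemma \ref{lem: characteristic} applied in reverse). Since $\xi=\alpha^k$ is $N$-free iff $\gcd(k,N)=1$, and since $N\mid q^m-1$, there are precisely $\frac{q^m-1}{N}\phi(N)$ residues $k$ modulo $q^m-1$ with this property. This produces the main term $\frac{1}{q}\cdot\frac{q^m-1}{N}\phi(N)$.

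For the contribution from $a\in\F^*$, I would parameterize $\F^*=\{\alpha^{Qi}:0\le i\le q-2\}$. For fixed $d\mid N$ and $j$, the standard substitution $\xi\mapsto\alpha^{-Qi}\xi$ gives
$$\sum_{\xi\in\Fr^*}\chi_{q^m}(\alpha^{Qi}\xi)\psi_d^j(\xi)=\overline{\psi_d^j(\alpha^{Qi})}G_{q^m}(\psi_d^j),$$
and averaging over $j=0,\ldots,d-1$ collapses to the Gaussian period $\eta_{Qi}^{(d,q^m)}$ by (\ref{eqn: periods and gauss sums}). Weighting by $\mu(d)$ and summing over $d\mid N$ then produces $\Delta_{Qi}(N)$, so the nontrivial part of the outer sum becomes
$$\frac{1}{q}\sum_{i=0}^{q-2}\overline{\chi_q(\alpha^{Qi}c)}\,\Delta_{Qi}(N)=\frac{1}{q}f_0(N,c,\Delta),$$
as required.

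The one delicate bookkeeping point to watch is the uniform treatment of the principal multiplicative character (the $d=1$ divisor, and the $j=0$ term for each $d$): the identity $G_{q^m}(\psi,\chi_a^{(q^m)})=\overline{\psi(a)}G_{q^m}(\psi)$ and the period--sum dictionary (\ref{eqn: periods and gauss sums}) both remain consistent under the convention $G_{q^m}(\psi_0)=-1$, which agrees with $\sum_{\beta\in\Fr^*}\chi_{q^m}(a\beta)=-1$ for $a\neq 0$. Granted this, the collapse to Gaussian periods is uniform in $j$ and $d$, and no separate trivial-character argument is needed.
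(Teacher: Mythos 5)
Your proposal is correct and follows essentially the same route as the paper's proof: insert the additive-orthogonality indicator for the trace condition and the M\"obius-type indicator of Lemma \ref{lem: characteristic}, split the $a$-sum into $a=0$ (giving $\frac{q^m-1}{N}\phi(N)$ as the count of $N$-free elements) and $a=\alpha^{Qi}\in\F^*$, and collapse the inner character sums to Gaussian periods via (\ref{eqn: periods and gauss sums}) to obtain $f_0(N,c,\Delta)$. The detour through the Gaussian-sum identity $G_{q^m}(\psi,\chi_a^{(q^m)})=\overline{\psi(a)}G_{q^m}(\psi)$ is harmless but unnecessary, since (\ref{eqn: periods and gauss sums}) already expresses the $j$-averaged sum $\frac{1}{d}\sum_{j}\sum_{x}\chi_{q^m}(\alpha^{Qi}x)\psi_d^j(x)$ directly as $\eta_{Qi}^{(d,q^m)}$, which is exactly how the paper proceeds.
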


\begin{proof}
By the orthogonality relation in (\ref{eqn: orthogonality}) and by the transitivity of the trace function, note that
$$
\dfrac{1}{q} \sum_{a \in \F} \overline{\chi_q(ac)} \chi_{q^m} \left( a \alpha^k \right) = \dfrac{1}{q}\sum_{a \in \F} \chi_q \left( a \left( \Tmq\left(\alpha^k  \right)  - c\right)  \right) = \begin{cases}
                                                                                       1 & \mbox{ if } \Tmq(\alpha^k) = c;\\
                                                                                       0 & \mbox{ otherwise.}
                                                                                      \end{cases}
 $$
 Thus if we multiply the characteristic function above with that of Lemma \ref{lem: characteristic}, and then sum over all the elements in $\Fr^*$, we get
 \begin{align*}
  Z_{q,m,N}(c) 
  &= \dfrac{1}{q} \sum_{d \mid N} \mu(d)\sum_{a \in \F} \overline{\chi_q(ac)} \dfrac{1}{d}\sum_{j=0}^{d-1}\sum_{k=1}^{q^m-1}\chi_{q^m} \left( a \alpha^k \right) \psi_d^j\left( \alpha^k  \right)\\
  &= \dfrac{1}{q} \left( \sum_{k=1 }^{q^m-1} \sum_{d \mid N}   \dfrac{\mu(d)}{d}\sum_{j=0}^{d-1} \psi_d^j\left(\alpha^k  \right)
  +  \sum_{i=0}^{q-2} \overline{\chi_q\left(\alpha^{Qi}c \right)} \sum_{d \mid N} \mu(d)\dfrac{1}{d}\sum_{j=0}^{d-1}\sum_{x \in \Fr^*}\chi_{q^m} \left( \alpha^{Qi} x \right) \psi_d^j\left( x  \right) \right)\\
  &= \dfrac{1}{q} \left( \dfrac{q^m-1}{N} \phi(N) + \sum_{i=0}^{q-2} \overline{\chi_q\left(\alpha^{Qi}c \right)} \sum_{d \mid N} \mu(d) \eta_{Qi}^{(d,q^m)}    \right)\\
  &= \dfrac{1}{q} \left( \dfrac{q^m-1}{N} \phi(N) + f_0(N, c, \Delta)   \right).
 \end{align*}
\end{proof}

\section{The case of the zero trace}

In this section we consider the special case of the zero trace and prove some of the corresponding assertions already mentioned in the introduction, and give some other related results.
We start off in Subsection 4.1 by deriving Theorem \ref{lem: zero trace} and giving some immediate consequences. See Corolleries \ref{thm: N coprime to Q}, \ref{thm: Q prime with trace 0},  \ref{thm: N = 2} and \ref{thm: N = 3} in the introduction. 
Then in Subsection 4.2 we prove Corollaries \ref{thm: semi delta} and \ref{thm: tuxanidy}, 
 and also  prove the ``semi-primitive'' characterization in Proposition \ref{prop: mersenne}. 

\subsection{Simplification of $ Z_{q,m,N}(0)$ and direct consequences}

First, in the case of the zero trace, we apply Ding-Yang lemmas (Lemma \ref{lem: ding} and \ref{lem: ding2}) to simplify the expression for $f(N, 0, \Delta)$.

\begin{lem}\label{lem: delta sum}
 Let $N \mid q^m-1$,  $k \in \mathbb{Z}$ and $K_Q$ be the largest divisor of $N$ that is coprime to $Q$. Then
 $$
 f_k(N,0, \Delta) = (q-1) \dfrac{\phi(K_Q)}{K_Q} \Delta_k(\gcd(Q, N)).
 $$
\end{lem}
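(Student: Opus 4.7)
The strategy is to peel off the $c=0$ simplification, interchange the sums that implicitly appear, invoke Lemma~\ref{lem: ding2} on the inner sum, and then factor the remaining Möbius sum according to the coprime decomposition of $N$ into its $Q$-part and its $Q$-coprime part.

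First I would note that when $c=0$, the character factor $\overline{\chi_q(\alpha^{Qi}\cdot 0)}$ collapses to $1$, so
$$
f_k(N,0,\Delta)=\sum_{i=0}^{q-2}\Delta_{Qi+k}(N)=\sum_{d\mid N}\mu(d)\sum_{i=0}^{q-2}\eta_{Qi+k}^{(d,q^m)}.
$$
Applying Lemma~\ref{lem: ding2} to the inner sum converts this into
$$
f_k(N,0,\Delta)=(q-1)\sum_{d\mid N}\mu(d)\,\frac{\gcd(Q,d)}{d}\,\eta_k^{(\gcd(Q,d),q^m)}.
$$

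Next I would exploit the coprime factorization $N=K_Q\cdot (N/K_Q)$, where $\gcd(K_Q,N/K_Q)=1$ by the definition of $K_Q$. Any squarefree divisor $d$ of $N$ (the only ones that contribute, since $\mu(d)$ kills the rest) factors uniquely as $d=d_1 d_2$ with $d_1\mid K_Q$ and $d_2$ a squarefree divisor of $N/K_Q$. Because every prime factor of $d_2$ divides $Q$ and $d_2$ is squarefree, $d_2\mid Q$, so $\gcd(Q,d)=d_2$; and $d_1$ is coprime to $Q$, so $\gcd(Q,d)/d=1/d_1$. Using the multiplicativity $\mu(d)=\mu(d_1)\mu(d_2)$, the sum factors:
$$
f_k(N,0,\Delta)=(q-1)\Bigl(\sum_{d_1\mid K_Q}\frac{\mu(d_1)}{d_1}\Bigr)\Bigl(\sum_{d_2\mid N/K_Q}\mu(d_2)\,\eta_k^{(d_2,q^m)}\Bigr).
$$

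The first factor is the classical Möbius identity $\phi(K_Q)/K_Q$. The second factor is $\Delta_k(N/K_Q)$ by definition, and since $\Delta_k$ depends only on the squarefree part of its argument, and $N/K_Q$ and $\gcd(Q,N)$ share the same radical (both carry precisely the primes of $N$ that divide $Q$), we have $\Delta_k(N/K_Q)=\Delta_k(\gcd(Q,N))$. Combining yields the claimed identity. The only subtle step is this last radical observation and the verification that $\gcd(Q,d_1 d_2)=d_2$ for squarefree $d$, both of which are clean bookkeeping rather than any real obstacle.
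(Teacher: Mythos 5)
Your proof is correct and follows essentially the same route as the paper's: collapse the character factor at $c=0$, apply Lemma~\ref{lem: ding2} termwise, and split the Möbius sum over the coprime decomposition of the squarefree divisors of $N$ into a $K_Q$-part and a $\gcd(Q,N)$-part. The only cosmetic difference is that the paper indexes the decomposition by divisors of $\rad(K_Q)$ and $\rad(\gcd(Q,N))$ directly, whereas you use $K_Q$ and $N/K_Q$ and invoke the radical-invariance of $\Delta_k$ at the end; both are the same bookkeeping.
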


\begin{proof}
For a positive divisor $d$ of $q^m-1$, let us denote, for the sake of brevity, $g(d) = \gcd(Q, d)$.
Now, by Lemma \ref{lem: ding2},
\begin{align*}
  f_k(N,0, \Delta) &= \sum_{d \mid N} \mu(d) \sum_{i=0}^{q-2}\eta_{Qi + k}^{(d,q^m)}\\
  &= (q-1)\sum_{d \mid N} \mu(d) \dfrac{g(d)}{d} \eta_k^{(g(d),q^m)}.
 \end{align*}
Note $\rad(N) = \rad(K_Q) \rad(\gcd\left(Q, N\right))$ is the product of the two coprime numbers $\rad(K_Q)$ and $\rad(\gcd\left(Q, N\right))$. 
 Then we can write any positive divisor $d$ of $\rad(N)$ uniquely as $d = yz$, where $y \mid \rad(K_Q)$ and $z \mid \rad(\gcd\left(Q, N\right))$. Moreover $g(yz) = z$ for any such $y,z$. Hence 
 \begin{align*}
  f_k(N,0, \Delta) &= (q-1) \sum_{y \mid K_Q} \sum_{z \mid \gcd\left(Q, N\right)} \mu(yz) \dfrac{g(yz)}{yz} \eta_k^{(g(yz),q^m)}\\
  &= (q-1) \sum_{y \mid K_Q} \dfrac{\mu(y)}{y} \sum_{z \mid \gcd\left(Q, N\right)} \mu(z) \eta_k^{(z,q^m)}\\
  &= (q-1) \dfrac{\phi(K_Q)}{K_Q} \Delta_k(\gcd\left(Q, N\right)).
 \end{align*}
\end{proof}

\begin{proof}[{\bf Proof of Theorem \ref{lem: zero trace}}]
 By Euler's product formula for $\phi$, and using the fact that $K_Q$ is coprime to $\gcd\left(Q, N\right)$ with $\rad(N) =  \rad(K_Q)\rad(\gcd\left(Q, N\right))$, we have
 \begin{align*}
  \dfrac{\phi(N)}{N} &= \prod_{\ell \mid N} \left( 1 - \dfrac{1}{\ell}  \right) \\
  &= \left(\prod_{\ell \mid K_Q } \left( 1 - \dfrac{1}{\ell}  \right)\right) \left(\prod_{\ell \mid \gcd\left(Q, N\right) }\left( 1 - \dfrac{1}{\ell}  \right)   \right)\\
  &= \dfrac{\phi(K_Q)}{K_Q} \dfrac{\phi(\gcd\left(Q, N\right))}{\gcd\left(Q, N\right)},
  \end{align*}
  where in the three products $\ell$ runs through all the distinct prime divisors of $N, K_Q$ and $\gcd\left(Q, N\right)$, respectively. 
  Hence by Lemmas \ref{lem: Z formula} and \ref{lem: delta sum},
  \begin{align*}
    Z_{q,m,N}(0) &= \dfrac{1}{q} \left( (q^m-1) \dfrac{\phi(N)}{N} + (q-1) \dfrac{\phi(K_Q)}{K_Q} \Delta_0(\gcd\left(Q, N\right))  \right)\\
   &= \dfrac{\phi(K_Q)}{qK_Q} \left( (q^m-1) \dfrac{\phi(\gcd\left(Q, N\right))}{\gcd\left(Q, N\right)} + (q-1)  \Delta_0(\gcd\left(Q, N\right))  \right)\\
   &= \dfrac{(q-1)\phi(K_Q)}{qK_Q}\left( \dfrac{Q}{\gcd\left(Q, N\right)}\phi(\gcd\left(Q, N\right)) +  \Delta_0(\gcd\left(Q, N\right))  \right).
  \end{align*}
\end{proof}

In particular one obtains in Lemma \ref{lem: primitive zero trace} the number of primitives with zero trace. The second equality (on the right) gives Lemma 2.1 in \cite{cohen and presern (2005)}.

\begin{lem}\label{lem: primitive zero trace}
Let $D$ be the smallest positive divisor of $q-1$ such that $(q-1)/D$ is coprime to $Q$. Then the number of primitive elements $\xi$ in $\Fr$ with $\Tmq(\xi) = 0$ is given by
 \begin{align*}
  Z_{q,m,q^m-1}(0) &= D \phi\left( \dfrac{q-1}{D} \right)\dfrac{ \phi(Q) + \Delta_0(Q)  }{q} \\
  &= D \phi\left( \frac{q-1}{D}  \right) \dfrac{Z_{q,m,Q}(0)}{q-1} .
 \end{align*}
\end{lem}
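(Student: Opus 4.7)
The plan is to recognize Lemma \ref{lem: primitive zero trace} as a direct specialization of Theorem \ref{lem: zero trace} with $N = q^m-1$, plus a small observation identifying $K_Q$ with $(q-1)/D$.

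First I would set $N = q^m-1$ in Theorem \ref{lem: zero trace}. Since $Q \mid q^m-1$, clearly $\gcd(Q,N) = Q$, so the bracketed term becomes $\phi(Q) + \Delta_0(Q)$. The only thing that needs justification is the identification of $K_Q$, the largest divisor of $q^m-1$ coprime to $Q$. The key observation is that since $q^m - 1 = (q-1)Q$, every prime $\ell$ dividing $q^m-1$ either divides $Q$ or divides only the $(q-1)$ factor (or both). A divisor of $q^m-1$ coprime to $Q$ must avoid all primes of $Q$, so its $\ell$-adic valuation can be non-zero only for primes $\ell$ with $\ell \nmid Q$; for such $\ell$, one has $v_\ell(q^m-1) = v_\ell(q-1)$. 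Thus $K_Q$ equals the largest divisor of $q-1$ coprime to $Q$, which by definition of $D$ is exactly $(q-1)/D$.

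Substituting $K_Q = (q-1)/D$ and $\gcd(Q,N) = Q$ into the formula of Theorem \ref{lem: zero trace} yields
\begin{equation*}
Z_{q,m,q^m-1}(0) = \dfrac{(q-1)\phi((q-1)/D)}{q(q-1)/D}\bigl(\phi(Q) + \Delta_0(Q)\bigr) = \dfrac{D\phi((q-1)/D)}{q}\bigl(\phi(Q) + \Delta_0(Q)\bigr),
\end{equation*}
which is the first claimed equality.

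For the second equality, I would apply Theorem \ref{lem: zero trace} once more, this time with $N = Q$. Here $K_Q = 1$ (the largest divisor of $Q$ coprime to $Q$), $\gcd(Q, Q) = Q$, and $\phi(1)/1 = 1$, so
\begin{equation*}
Z_{q,m,Q}(0) = \dfrac{q-1}{q}\bigl(\phi(Q) + \Delta_0(Q)\bigr), \qquad \text{hence} \qquad \dfrac{\phi(Q) + \Delta_0(Q)}{q} = \dfrac{Z_{q,m,Q}(0)}{q-1}.
\end{equation*}
Combining this with the first formula immediately gives the stated second equality. There is no real obstacle here; the only mildly delicate point is the prime-by-prime argument identifying $K_Q$ with $(q-1)/D$, and this follows from examining $p$-adic valuations in $q^m-1 = (q-1)Q$.
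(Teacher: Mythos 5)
Your proposal is correct and matches the paper's (implicit) argument: the lemma is presented there as an immediate specialization of Theorem \ref{lem: zero trace}, first with $N = q^m-1$ and then with $N = Q$, exactly as you do. Your valuation argument identifying $K_Q$ with $(q-1)/D$ via $q^m-1 = (q-1)Q$ is the right way to fill in the one detail the paper leaves unstated.
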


 We now derive some other immediate consequences to Theorem \ref{lem: zero trace}.  
 
 \begin{proof}[{\bf Proof of Corollary \ref{thm: N coprime to Q}}]
  Follows directly from Theorem \ref{lem: zero trace} and the fact that $\Delta_0(1) = -1$.
 \end{proof}
 
 \begin{lem}\label{lem: delta Q prime}
 Let $q$ be a power of a prime $p$ and assume that $Q = (q^{\ell}-1)/(q-1)$ is prime for some prime $\ell$. Then
 $$
 \Delta_0(Q) = \begin{cases}
                0 & \mbox{ if } \ell \neq p;\\
                -q & \mbox{ otherwise.}
               \end{cases}
$$
\end{lem}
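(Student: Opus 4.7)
The plan is to unwind the definition of $\Delta_0(Q)$ using the hypothesis that $Q$ is prime, reducing everything to two known Gaussian period values already computed earlier in the paper.

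Since $Q$ is prime, its only positive divisors are $1$ and $Q$, so the M\"obius sum defining $\Delta_0(Q)$ collapses to just two terms:
\begin{equation*}
\Delta_0(Q) = \sum_{d \mid Q} \mu(d)\, \eta_0^{(d, q^{\ell})} = \mu(1)\, \eta_0^{(1,q^{\ell})} + \mu(Q)\, \eta_0^{(Q,q^{\ell})} = \eta_0^{(1,q^{\ell})} - \eta_0^{(Q,q^{\ell})}.
\end{equation*}
The first period is immediate: as remarked right before Lemma \ref{lem: period N = 2}, $\eta_0^{(1,q^{\ell})} = -1$. The second period is handled by Lemma \ref{lem: period Q}, which gives $\eta_0^{(Q,q^{\ell})} = -1$ when $p \nmid \ell$ and $\eta_0^{(Q,q^{\ell})} = q-1$ when $p \mid \ell$.

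Since $\ell$ is prime, the condition $p \mid \ell$ is equivalent to $\ell = p$. Substituting yields $\Delta_0(Q) = -1 - (-1) = 0$ in the case $\ell \neq p$, and $\Delta_0(Q) = -1 - (q-1) = -q$ in the case $\ell = p$, which is exactly the claimed dichotomy. There is no real obstacle here; the lemma is essentially a bookkeeping corollary of Lemma \ref{lem: period Q}, and its role is to feed a clean input into the proof of Corollary \ref{thm: Q prime with trace 0} via Theorem \ref{lem: zero trace}.
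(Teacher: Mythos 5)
Your proof is correct and follows exactly the paper's own route: collapse the M\"obius sum over the two divisors of the prime $Q$ to get $\Delta_0(Q) = -1 - \eta_0^{(Q,q^{\ell})}$, then invoke Lemma \ref{lem: period Q} and note that $p \mid \ell$ with $\ell$ prime forces $\ell = p$. Nothing to add.
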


\begin{proof}
 Since $Q$ is prime, then $\Delta_0(Q) = -1 - \eta_0^{(Q, q^{\ell})}$. Now the result follows from Lemma \ref{lem: period Q}.
\end{proof}

\begin{proof}[{\bf Proof of Corollary \ref{thm: Q prime with trace 0}}]
 Note that $\gcd(q-1, Q) = 1$ since otherwise $Q \mid q-1$ contradicting $Q > q-1$ for $\ell \geq 2$. 
  Now the result follows directly from Lemmas \ref{lem: delta Q prime} and \ref{lem: primitive zero trace}.
 \end{proof}


 \begin{proof}[{\bf Proof of Corollary \ref{thm: N = 2}}]
  Follows directly from Theorem \ref{lem: zero trace} and  Lemma \ref{lem: period N = 2}.
 \end{proof}
 
   
   \begin{proof}[{\bf Proof of Corollary \ref{thm: N = 3}}]
    Note that since $\gcd(Q, N) = 3$, then $Q \equiv m \equiv 0 \pmod{3}$. The result now follows from 
    Lemma \ref{lem: period N = 3} and Theorem \ref{lem: zero trace}.
   \end{proof}

\subsection{Proof of Corollaries \ref{thm: semi delta} and \ref{thm: tuxanidy}} 

In this subsection we prove Corollaries \ref{thm: semi delta} and \ref{thm: tuxanidy} stated in the introduction, corresponding to the case of the zero trace. 
We employ the known explicit formulas for the Gaussian periods 
in the semi-primitive case (see Lemma \ref{lem: semiprimitive}) to first derive, in the following lemma, 
the value of the sum $\Delta_0(N)$ for $N$ falling under the category of Lemma \ref{lem: semiprimitive}. 
Then by Theorem \ref{lem: zero trace} we get the result of Corollary  \ref{thm: semi delta}.
One of course can then naturally consider whether this result applies to primitives, but unfortunately, as Proposition \ref{prop: mersenne} shows, 
it only extends to primitives in quartic extensions of Mersenne prime fields. 
Mersenne primes also appear in the trivial case for which a formula is known. This is the case when all the irreducibles are also the primitives, that is, when
$q = 2$ and $m = \ell$ with $2^\ell -1$ being a Mersenne prime. See the comments under Theorem \ref{thm: carlitz} in the introduction.

\begin{lem}\label{lem: semi delta}
 Let $sm$ be even with $m > 1$, let $q = p^s$ be a power of a prime $p$, and suppose that $n > 1$ is not a power of $2$ and satisfies $n \mid q^m-1$.
    Further assume there exists a positive integer $j$ such that $p^j \equiv -1 \pmod{\ell}$ for every prime divisor $\ell \geq 3$ of $n$,
    and that $j$ is the least such. Define $\gamma = sm/2j$ and let $\eta_0^{(2,q^m)}$ be as in Lemma \ref{lem: period N = 2}.
    
    (a) If $\gamma$ and $p$ are odd, $n$ is even and $(p^j+1)/2$ is odd, then
    $$
    \Delta_0(n) = - \eta_0^{(2,q^m)} - \left( 1 + q^{m/2}   \right)\left( \dfrac{1}{2} + \dfrac{ \phi\left( n \right) }{n}  \right).
    $$
    
    (b) In all other cases,
    $$
    \Delta_0(n) = \epsilon_2 \cdot \left(\dfrac{(-1)^{\gamma + 1}q^{m/2} - 1}{2} - \eta_0^{(2,q^m)}\right) + \dfrac{(-1)^{\gamma} q^{m/2} -1}{n} \phi(n),
    $$
    where
    $$
    \epsilon_2 = \begin{cases}
                  1 & \mbox{ if } n \mbox{ is even;}\\
                  0 & \mbox{ otherwise.} 
                 \end{cases}
$$
\end{lem}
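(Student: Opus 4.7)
The plan is to reduce $\Delta_0(n) = \sum_{d \mid n} \mu(d) \eta_0^{(d, q^m)}$ to a tractable form by exploiting the semi-primitive hypothesis, then evaluate each surviving Gaussian period via Lemma \ref{lem: semiprimitive} (and Lemma \ref{lem: period N = 2} for $d = 2$), and finally combine via Möbius inversion.

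Since $\mu$ vanishes on non-squarefree integers and $\Delta_0$ depends only on $\rad(n)$, I may assume $n$ is squarefree. Write $n = 2^{\epsilon_2} n_1$ with $n_1 > 1$ odd squarefree (possible because $n$ is not a power of $2$). Every squarefree divisor of $n$ then has the form $2^b d'$ with $b \in \{0, \epsilon_2\}$ and $d' \mid n_1$, so using $\mu(2 d') = -\mu(d')$,
\begin{equation*}
\Delta_0(n) = \sum_{d' \mid n_1} \mu(d') \left( \eta_0^{(d', q^m)} - \epsilon_2 \, \eta_0^{(2 d', q^m)} \right).
\end{equation*}

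The technical heart is a uniform parity analysis. For each prime $\ell \mid n$ with $\ell \geq 3$, let $j_\ell$ be the least positive integer with $p^{j_\ell} \equiv -1 \pmod{\ell}$; and for squarefree $d \mid n$ with $d > 2$, let $j_d$ denote the corresponding least exponent, which exists by CRT. The relation $p^j \equiv -1 \pmod{\ell}$ forces $j$ to be an odd multiple of $j_\ell$ (otherwise $p^j \equiv 1 \pmod{\ell}$), so the $2$-adic valuations satisfy $v_2(j) = v_2(j_\ell)$, and a short lcm-argument extends this to $v_2(j) = v_2(j_d)$. In particular $\gamma_d := sm/(2 j_d)$ and $\gamma = sm/(2j)$ have the same parity, and a complementary computation with $p^a + 1 \pmod{4}$ shows that, for even $d = 2 d'$ with $d' > 1$ odd, ``$(p^{j_d}+1)/d$ is odd'' is equivalent to ``$(p^j+1)/2$ is odd'' whenever $p$ is odd.

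With this equivalence, Lemma \ref{lem: semiprimitive} applied at $N = d$ (with $r = p^{2 j_d \gamma_d} = q^m$) yields uniformly: for odd $d' > 1$, case (b) always applies (since either $p = 2$, or else $p$ odd forces $(p^{j_{d'}}+1)/d'$ even), giving $\eta_0^{(d', q^m)} = \bigl((-1)^{\gamma+1}(d'-1) q^{m/2} - 1\bigr)/d'$; for even $d = 2 d'$ with $d' > 1$, case (a) of Lemma \ref{lem: semiprimitive} holds precisely in case (a) of the present lemma, yielding $\eta_0^{(2 d', q^m)} = -(q^{m/2}+1)/(2 d')$, and otherwise case (b) supplies the analogous expression. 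Substituting back, together with $\mu(1)\eta_0^{(1, q^m)} = -1$ and $\mu(2)\eta_0^{(2,q^m)} = -\eta_0^{(2,q^m)}$ when $\epsilon_2 = 1$, and collapsing via the elementary identities $\sum_{d' \mid n_1,\, d' > 1} \mu(d') = -1$, $\sum_{d' \mid n_1} \mu(d')/d' = \phi(n_1)/n_1$, and $\phi(n_1)/n_1 = 2\phi(n)/n$ (when $n$ is even), produces the two stated formulas after routine algebra. The principal obstacle, the uniform translation of the case-(a) hypothesis from $(j, \gamma)$ to $(j_d, \gamma_d)$, is precisely what the $2$-adic argument above secures.
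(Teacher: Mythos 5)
Your proposal is correct and follows essentially the same route as the paper: both reduce to the squarefree case, pair each odd divisor $d'$ of $n$ with $2d'$, feed the semi-primitive formulas of Lemma \ref{lem: semiprimitive} (and Lemma \ref{lem: period N = 2} for $d=2$) into the M\"obius sum, and collapse via $\sum_{d\mid k}\mu(d)/d = \phi(k)/k$. Your explicit $2$-adic argument that $v_2(j)=v_2(j_d)$, so that $\gamma_d$ and $\gamma$ share parity and $(p^{j_d}+1)/d$ has the parity dictated by $(p^j+1)/2$, is a welcome tightening of a step the paper passes over with a one-line assertion.
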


\begin{proof}
    First note the assumption on $n$ and $j$ means that, for every positive $3 \leq d \mid \rad(n)$, we have that $j$ is the least such that $p^j \equiv -1 \pmod{d}$. 
    
    (a) Consider any such $d$ as above. If $d$ is odd, then $(p^j+1)/d$ is even and so $\eta_0^{(d,q^m)}$ belongs to case
    (b) of Lemma \ref{lem: semiprimitive}. Moreover since $2$ has multiplicity $1$
    in the factorization of $p^j+1$, then $(p^j+1)/2d$ is odd if so is $d$; in this case
    $\eta_0^{(2d, q^m)}$ belongs to case (a) in Lemma \ref{lem: semiprimitive}. Now let $V_2$ be the largest power of $2$ dividing $n$. We then have
    \begin{align*}
  \sum_{3 \leq d \mid n } \mu(d) \eta_0^{(d,q^m)} 
  &=  \sum_{\substack{3 \leq d \mid n \\ d \text{ odd}}} \left( \mu(d) \eta_0^{(d,q^m)} + \mu(2d) \eta_0^{(2d, q^m)} \right)
  =  \sum_{\substack{3 \leq d \mid n \\ d \text{ odd}}} \mu(d) \left( \eta_0^{(d,q^m)} - \eta_0^{(2d, q^m)}   \right)\\
  &= \sum_{\substack{3 \leq d \mid n \\ d \text{ odd}}} \mu(d) \left( \dfrac{(d-1)q^{m/2} - 1}{d}  + \dfrac{q^{m/2}+1}{2d}  \right)
  = \sum_{\substack{3 \leq d \mid n \\ d \text{ odd}}} \mu(d)\left( q^{m/2} - \dfrac{q^{m/2}+ 1}{2d}    \right)\\
  &= \dfrac{1-q^{m/2}}{2} + \sum_{\substack{d \mid n \\ d \text{ odd}}} \mu(d)\left( q^{m/2} - \dfrac{q^{m/2}+ 1}{2d}    \right)
  = \dfrac{1-q^{m/2}}{2} + \sum_{d \mid n/V_2} \mu(d)\left( q^{m/2} - \dfrac{q^{m/2}+ 1}{2d}    \right)\\
  &= \dfrac{1-q^{m/2}}{2} - \dfrac{q^{m/2}+ 1}{2}\sum_{d \mid n/V_2} \dfrac{\mu(d)}{d}
  = \dfrac{1 - q^{m/2}}{2} - \dfrac{(1 + q^{m/2})V_2}{2n} \phi\left( \dfrac{n}{V_2} \right)\\
  &= \dfrac{1 - q^{m/2}}{2} - \dfrac{1 + q^{m/2}}{n} \phi\left( n \right)
 \end{align*}
 since $V_2$ is coprime to $n/V_2$ and $\phi(V_2) = V_2/2$. Then we have
 $$
 \Delta_0(n) = -1 - \eta_0^{(2,q^m)} + \sum_{3 \leq d \mid n} \mu(d) \eta_0^{(d,q^m)} = -1  - \eta_0^{(2,q^m)} + \dfrac{1 - q^{m/2}}{2} - \dfrac{1 + q^{m/2}}{n} \phi\left( n \right).
 $$
Hence the result follows. 
 
 (b) As before assume $d \mid n $ with $d \geq 3$ and $\mu(d) \neq 0$. We claim that 
 $\eta_0^{(d,q^m)}$ belongs to case (b) in Lemma \ref{lem: semiprimitive}. Indeed, 
if $p$ or $\gamma$ is even, then $\eta_0^{(d,q^m)}$ belongs to $(b)$. If $d$ is odd, necessarily $(p^j + 1)/d$ is even unless $p$ is even; hence (b). This also takes care of the case
when $n$ is odd. Finally if $n$ is even and $2$ has multiplicity greater than $1$ in the factorization of $p^j+1$, then $(p^j+1)/2d$ is even for any such $d$ odd.
   The claim follows. Hence by Lemma \ref{lem: semiprimitive} (b) we have 
   \begin{align*}
    \sum_{3 \leq d \mid n} \mu(d) \eta_0^{(d, r)} &= \sum_{3 \leq d \mid n} \mu(d)  \left(\dfrac{(-1)^{\gamma + 1}dq^{m/2} + (-1)^{\gamma}q^{m/2} - 1}{d}   \right)\\
&= 1 + \epsilon_2 \cdot \left(\dfrac{(-1)^{\gamma + 1}q^{m/2} - 1}{2}\right) + \sum_{d \mid n}\mu(d) \left( (-1)^{\gamma +1}q^{m/2} + \dfrac{(-1)^{\gamma} q^{m/2} -1}{d} \right)\\
&= 1 + \epsilon_2 \cdot \left(\dfrac{(-1)^{\gamma + 1}q^{m/2} - 1}{2}\right) + \sum_{d \mid n}\mu(d)\left(\dfrac{(-1)^{\gamma} q^{m/2} -1}{d}\right)\\
&= 1 + \epsilon_2 \cdot \left(\dfrac{(-1)^{\gamma + 1}q^{m/2} - 1}{2}\right) + \dfrac{(-1)^{\gamma} q^{m/2} -1}{n}\sum_{d \mid n} \mu(d)\dfrac{n}{d}\\
&= 1 + \epsilon_2 \cdot \left(\dfrac{(-1)^{\gamma + 1}q^{m/2} - 1}{2}\right) + \dfrac{(-1)^{\gamma} q^{m/2} -1}{n} \phi(n).
   \end{align*}
Hence the result follows.
   \end{proof}

\begin{proof}[{\bf Proof of Corollary  \ref{thm: semi delta}}]
 It follows directly from Lemma \ref{lem: semi delta} and Theorem \ref{lem: zero trace}.
\end{proof}

In order to prove Proposition \ref{prop: mersenne} we make use of Mih\u{a}ilescu's breakthrough result \cite{catalan}, also known as Catalan's conjecture. 
Although like Wiles' Theorem (Fermat's Last Theorem) it is easily stated, it took 160 years for the conjecture to be finally solved, by Mih\u{a}ilescu \cite{catalan}.
   
\begin{thm}[{\bf Mih\u{a}ilescu (2004)}]\label{thm: catalan}
 Let $x,y,a,b \in \mathbb{N}$ with $a, b > 1$. If $x^a - y^b = 1$, then $x = b = 3$ and $y = a = 2$.
\end{thm}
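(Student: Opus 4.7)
The strategy is to follow Mih\u{a}ilescu's 2004 argument, which is the only known proof of this 160-year-old conjecture. The first step is a reduction to the case where both exponents are odd primes. If $a$ is even, write $a = 2c$ and factor $x^a - 1 = (x^c-1)(x^c+1) = y^b$; the two factors have gcd dividing $2$, so each is (almost) a perfect $b$-th power, and classical results (V.A. Lebesgue for $b = 2$, Ko Chao for $a = 2$, and elementary congruence arguments for small cases) dispose of these situations while picking up the sporadic solution $3^2 - 2^3 = 1$. A symmetric reduction handles $b$ even. One may therefore assume both $a = p$ and $b = q$ are odd primes, and the goal becomes to show there are no solutions at all in that regime.

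Next, I would invoke Cassels' 1953 theorem, which asserts that any integer solution of $x^p - y^q = 1$ in odd primes satisfies $p \mid y$ and $q \mid x$. This yields a precise factorization $x - 1 = p^{q-1} a^q$ and $\frac{x^p - 1}{x - 1} = p b^q$ (with coprimality conditions on $a,b$), and analogous identities for $y$. One then lifts the equation into the cyclotomic ring $\mathbb{Z}[\zeta_p]$ using $x^p - 1 = \prod_{i=0}^{p-1}(x - \zeta_p^i)$, studies the ideals $(x - \zeta_p^i)/(1 - \zeta_p)$, and shows that each is a $q$-th power of a fractional ideal whose class lies in a controlled subgroup of the class group of $\mathbb{Q}(\zeta_p)$.

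The third ingredient is Mih\u{a}ilescu's own earlier ``double Wieferich'' criterion (2000): any hypothetical solution forces the simultaneous congruences $p^{q-1} \equiv 1 \pmod{q^2}$ and $q^{p-1} \equiv 1 \pmod{p^2}$. Combined with Inkeri-type conditions involving the class numbers of the real subfields $\mathbb{Q}(\zeta_p)^+$ and $\mathbb{Q}(\zeta_q)^+$, this drastically restricts the admissible pairs $(p,q)$ and, in particular, forces $\min(p,q)$ to be extremely large.

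The main obstacle, and the genuinely deep part of the argument, is the final step. Here one applies Stickelberger's theorem to produce annihilators of the class of the ideal associated with $x - \zeta_p$, invokes Thaine's theorem to control the plus-part of the class group, and then uses Runge-style analytic bounds on the absolute values of the Galois conjugates of the resulting algebraic integer to derive a numerical contradiction. I expect this cyclotomic and Galois-theoretic analysis to be the principal difficulty: the reduction and Cassels steps are classical, but combining Stickelberger, Thaine, and the double-Wieferich constraints into a quantitative inequality tight enough to eliminate every remaining $(p,q)$ requires the full technical machinery of Mih\u{a}ilescu's paper, and I would rely on that framework rather than attempt to reproduce its most delicate estimates from scratch.
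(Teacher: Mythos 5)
This theorem is not proved in the paper at all: it is quoted verbatim from the literature (Mih\u{a}ilescu's 2004 paper, cited as \cite{catalan}) and used purely as a black box in the proof of Proposition \ref{prop: mersenne}, so there is no ``paper's own proof'' to compare yours against. Your outline is a broadly faithful summary of how the actual proof goes --- the reduction of even exponents via Lebesgue ($b=2$) and Ko Chao ($a=2$), which is where the sporadic solution $3^2-2^3=1$ arises; Cassels' divisibility relations $p \mid y$, $q \mid x$ and the attendant factorizations; the double Wieferich criterion; and the Stickelberger/Runge endgame in $\mathbb{Z}[\zeta_p]$. One historical caveat: the version of the final argument that Mih\u{a}ilescu ultimately published is notable precisely for \emph{avoiding} Thaine's theorem (and all computer verification); Thaine's theorem appears in earlier expositions and partial results, so your description of the last step conflates two stages of the proof's history, though this does not affect correctness of the roadmap.

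The substantive issue is that your proposal is not a proof: every decisive step (Cassels' theorem, the double Wieferich criterion, the Stickelberger annihilator argument, the Runge-type bounds) is explicitly delegated to the very source being quoted, so as a self-contained argument it has a gap coextensive with the entire theorem. In the context of this paper that is not a defect --- the authors themselves import the result wholesale, and reproving Catalan's conjecture is far outside the scope of a paper on $N$-free elements with prescribed trace --- but you should be clear that what you have written is an annotated citation, not an independent derivation.
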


For a positive integer $k$, we let $v_2(k)$ denote the multiplicity of $2$ in the factorization of $k$. For an integer $b$ coprime to $k$, 
we let $\ord_k(b)$ denote the multiplicative order of $b$ modulo $k$.

\begin{proof}[{\bf Proof of Proposition \ref{prop: mersenne}}]
 Let $q = p^s$ be the power of a prime $p$, with $s \geq 1$. The case when $m = 2$ is clear as $(q^2 - 1)/(q-1) = q+1$ and so we can let $j = s$. We may thus suppose $m > 2$. 
 Now assume that the congruence is satisfied, i.e., that
 $$
 p^j \equiv -1 \pmod{\rad\left(\dfrac{p^{sm}-1}{p^s-1}\right)}
 $$
 holds. 
 
 We claim that $m$ is a power of $2$. On the contrary, suppose the odd part, $k$, of $m$, satisfies $k > 1$. 
 Let $t$ be the odd part of $sk$. Then $t > 1$ and $(sk, t) = t\nmid s$ since $k > 1$ and $k \mid t$. Since $t \mid sk$, we have
 $(p^{sk} - 1, p^t - 1) = p^t - 1 > 1$. 
 Moreover $d := \rad((\frac{p^{sk} - 1}{p^s - 1}, p^t - 1)) > 1$ since otherwise $(p^t - 1) \mid (p^s - 1)$ contrary to $t \nmid s$. 
 Because $k > 1$ is odd, then both $\frac{p^{sk} - 1}{p^s - 1}$ and hence $d$ are odd. In particular $d \geq 3$.
 
 Note that $p^t \equiv 1 \pmod{d}$. Thus if $a$ is the smallest positive 
 integer such that $p^a \equiv 1 \pmod{d}$, then $a \mid t$ and hence $a$ is odd. 
 Now observe that $p^j \equiv -1 \pmod{d}$, since $d \mid \rad(\frac{p^{sk} - 1}{p^s - 1}) \mid \rad(\frac{p^{sm} - 1}{p^s - 1})$. Thus $p^{2j} \equiv 1 \pmod{d}$. 
 It follows that $a \nmid j$ but $a \mid 2j$. This however implies that $a$ is even, a contradiction. The claim follows. Hence $m = 2^n$ for some $n \geq 2$ and
 $$
 \dfrac{p^{sm} - 1}{p^s-1} = \prod_{i=0}^{n-1} \left( p^{s2^{i}} + 1  \right).
 $$
 
 Since $n \geq 2$, we can let $i \geq 0$ be arbitrary such that both $p^{s2^i} + 1$ and $p^{s2^{i+1}} + 1$ divide $(p^{s2^n} - 1)/(p^s-1)$. 
 Suppose there exist square-free divisors $d_i, d_{i+1} \geq 3$ of $p^{s2^i} + 1$ and $p^{s2^{i+1}} + 1$, respectively. 
 For the sake of brevity let $a_i := \ord_{d_i}(p)$ and $a_{i+1} := \ord_{d_{i+1}}(p)$. Since $p^j \equiv -1 \pmod{d_i}$, we have $a_i \nmid j$ but $a_{i} \mid 2j$ 
 implying that $v_2(j) = v_2(a_i) - 1$. Similarly since $p^{s2^i} \equiv -1 \pmod{d_i}$, then $a_i \nmid s2^i$ but $a_i \mid 2(s2^i)$. This implies that $v_2(a_i) = v_2(s) + i + 1$. It then follows that
 $v_2(j) = v_2(s) + i$. Working with $d_{i+1}$ and $a_{i+1}$ now we similarly deduce that $v_2(j) = v_2(s) + i + 1$, a contradiction. Necessarily at least one of
 $p^{s2^i} + 1$, $p^{s2^{i+1}} + 1$, is a power of $2$. But note that, for any positive integer $b$, 
 we have $p^b + 1 = 2^v$ if and only if $p^b = 2^v - 1$. In this case it is clear that $v > 1$; then
 we can not have
 $b > 1$ since otherwise Theorem \ref{thm: catalan} is contradicted. In particular $p$ is a Mersenne prime. Moreover 
 $s2^i = 1$ or $s2^{i+1} = 1$. Of these two, only $s2^i = 1$ is possible, whence $i = 0$, $s = 1$, and $q = p$ is a Mersenne prime. 
 Because $i \geq 0$ was arbitrary such that both $p^{s2^i} + 1$ and $p^{s2^{i+1}} + 1$ divide $(p^{s2^n} - 1)/(p^s-1)$, necessarily $n = 2$ is the only possibility, whence
 $m = 4$ and $(p^{sm} - 1)/(p^s - 1) = (p+1)(p^2 + 1)$. 
  
 Since $p$ is a Mersenne prime, $p + 1$ is a power of $2$ and $2 \mid (p^2 + 1)$ imply that $\rad((p+1)(p^2 + 1)) = \rad(p^2 + 1)$. 
 Thus it only remains to show that $\rad(p^2 + 1) \mid (p^j + 1)$ if and only if $j = 2k$ for some odd $k \geq 1$. 
 Assume that $\rad(p^2 + 1) \mid (p^j + 1)$. First it is easy to check that $p^2 + 1$ is not a power of $2$. 
 Hence we can let $d \geq 3$ with $d \mid \rad(p^2 + 1)$. Then $p^2 \equiv -1 \pmod{d}$ and $p^4 \equiv 1 \pmod{d}$ implies $\ord_d(p) = 4$. 
Since $d \mid \rad(p^2 + 1) \mid (p^j + 1)$, then $p^j \equiv -1 \pmod{d}$ implying $4 \nmid j$ but $4 \mid 2j$. This means that $j = 2k$ for some odd $k \geq 1$.
Conversely let $j = 2k$ for any odd $k \geq 1$. Note that $p^{2k} = (p^2)^k = [(p^2 + 1) - 1]^k$. Then by the Binomial Theorem we have
\begin{align*}
p^{2k} &= \left[\left(p^2 + 1\right) - 1\right]^k = \sum_{i=0}^k {k \choose i} \left( p^2 + 1 \right)^i (-1)^{k-i} 
= -1 + \sum_{i=1}^k {k \choose i} \left( p^2 + 1 \right)^i (-1)^{k-i}.
\end{align*}
Hence
$$
p^{2k} + 1 = \sum_{i=1}^k {k \choose i} \left( p^2 + 1 \right)^i (-1)^{k-i}
$$
and so $(p^2 + 1) \mid (p^{2k} + 1)$. Consequently $\rad(p^2 + 1) \mid (p^{2k} + 1)$. 
\end{proof}

\begin{proof}[{\bf Proof of Corollary \ref{thm: tuxanidy} }]
By Proposition \ref{prop: mersenne} we have that $Q = (p^4 - 1)/(p-1) = (p+1)(p^2 + 1)$ satisfies $p^j \equiv -1 \pmod{\rad(Q)}$ with $j = 2k$ for every odd $k \geq 1$; 
in particular for $j = 2$. If $\ell \geq 3$ is a prime divisor of $Q$, clearly $\ell \nmid p + 1$ (since $p+1$ is a power of $2$). Then $j = 2$ is the least such that
$p^j \equiv -1 \pmod{\ell}$. 
Corollary \ref{thm: semi delta} then applies with $s = 1$, $m = 4$,  $N = p^4-1$, $j = 2$, $n = Q = (p+1)(p^2+1)$, and $\gamma = 1$. With the notation of Corollary  \ref{thm: semi delta}, 
it is easy to show that $K_Q = (p-1)/2$. Since $p, \gamma,$ are odd while $n$ is even and $2$ has multiplicity $1$ in the factorization of $p^2+1$, then case (a) of Lemma \ref{lem: semi delta}
applies. In this case we get, using the fact that $p \equiv 3 \pmod{4}$ 
together with Lemma \ref{lem: period N = 2},
\begin{align*}
 \Delta_0(p^2 + 1) &= \Delta_0((p+1)(p^2+1)) = -1 + \dfrac{1 + p^2}{2} + \dfrac{1 - p^2}{2} - \dfrac{1}{p+1} \phi\left( (p+1)(p^2+1)  \right)\\
 &= - \dfrac{1}{p+1} \phi((p+1)(p^2+1)) = - \phi(p^2+1) = -\phi\left(\dfrac{p^2+1}{2}\right). 
\end{align*}
It then follows from Lemma \ref{lem: primitive zero trace} that
 $$
 Z_{p,4,p^4-1}(0) = \dfrac{1}{p} \left( \phi\left(p^4-1 \right) - 2 \phi\left( \dfrac{p-1}{2}  \right)   \phi\left( \dfrac{p^2+1}{2}   \right)\right).
 $$
 It remains to note that 
 $$\phi\left(\dfrac{p-1}{2} \right) \phi\left(\dfrac{p^2+1}{2}\right) = \phi\left(\dfrac{(p-1)(p^2+1)}{4}\right) = \dfrac{1}{2}\phi\left(\dfrac{p^4-1}{p+1}\right)$$
 since 
 $$\gcd\left(\dfrac{p-1}{2}, \dfrac{p^2+1}{2}\right) = \gcd\left(4, \dfrac{(p-1)(p^2+1)}{4}\right) = 1$$
 and
 $p^4-1 = (p-1)(p+1)(p^2+1)$.
 The result follows.
\end{proof}

\section{Uniformity in the case of the non-zero trace}

In this section we explore the concept of uniformity, already discussed in the introduction. That is, the main concern here is as follows: what triples $(q,m,N)$, with $N \mid q^m-1$, 
are such that $Z_{q, m,N}(c)$ is constant for every non-zero $c \in \F^*$? Accordingly, in this section we prove Theorem \ref{thm: uniformity}. As a consequence of this and of Corollary  \ref{thm: semi delta}, 
Corollary \ref{cor: semidelta uniform} is straightforward. In particular, in the case of primitives, i.e., $N = q^m-1$, we give sufficient conditions for $Z_{q,m,q^m-1}(c)$ to behave uniformly for $c \in \F^*$. 
See Corollary \ref{cor: unif} for this.

\begin{lem}\label{lem: uniformity}
 Let $N \mid q^m-1$,  $c \in \F^*$ be arbitrary, $K_Q$ be the largest divisor of $N$ that is coprime to $Q$. Then
 $$
 Z_{q,m,N}(c \neq 0) = \dfrac{1}{q} \left(  \dfrac{q^m-1}{N} \phi(N) + K_Q \dfrac{ \frac{q^m-1}{N}\phi(N) - q Z_{q,m,N}(0)    }{(q-1) \phi(K_Q)  } + f_0(N,c, \Delta) - f_0\left(\gcd\left(Q, N\right), c, \Delta\right)\right).
 $$
 \end{lem}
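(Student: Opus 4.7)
The plan is to start from the general formula in Lemma \ref{lem: Z formula}, which says $Z_{q,m,N}(c) = \frac{1}{q}\bigl(\frac{q^m-1}{N}\phi(N) + f_0(N,c,\Delta)\bigr)$, and then rewrite $f_0(N,c,\Delta)$ as $\bigl(f_0(N,c,\Delta) - f_0(\gcd(Q,N),c,\Delta)\bigr) + f_0(\gcd(Q,N),c,\Delta)$. The entire lemma then reduces to finding a closed form for $f_0(\gcd(Q,N),c,\Delta)$ when $c \in \F^*$ and identifying it with the expression $K_Q \bigl[\tfrac{q^m-1}{N}\phi(N) - qZ_{q,m,N}(0)\bigr]/\bigl[(q-1)\phi(K_Q)\bigr]$ appearing in the statement.

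To evaluate $f_0(\gcd(Q,N),c,\Delta)$, I would set $n := \gcd(Q,N)$ and exploit the fact that $n \mid Q$, so $Qi \equiv 0 \pmod{n}$ for every $i$. Then the first identity in Proposition \ref{prop: identities} gives $\Delta_{Qi}(n) = \Delta_0(n)$, so
$$f_0(n,c,\Delta) = \Delta_0(n)\sum_{i=0}^{q-2}\overline{\chi_q(\alpha^{Qi}c)}.$$
Since $\alpha^Q$ has order $(q^m-1)/Q = q-1$, it generates $\F^*$, and multiplication by $c \in \F^*$ permutes $\F^*$. Hence the inner sum becomes $\sum_{a \in \F^*}\overline{\chi_q(a)} = -1$ by the orthogonality relation (\ref{eqn: orthogonality}). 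This yields the clean identity $f_0(\gcd(Q,N),c,\Delta) = -\Delta_0(\gcd(Q,N))$ for every non-zero $c$.

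The remaining step is a direct algebraic identification. Using the Euler-product factorization $\phi(N)/N = (\phi(K_Q)/K_Q)(\phi(n)/n)$ (already established within the proof of Theorem \ref{lem: zero trace}) together with $q^m - 1 = (q-1)Q$, I would rewrite
$$\tfrac{q^m-1}{N}\phi(N) = (q-1)\tfrac{\phi(K_Q)}{K_Q}\cdot\tfrac{Q\phi(n)}{n}.$$
Subtracting $qZ_{q,m,N}(0)$ (read off from Theorem \ref{lem: zero trace}) causes the $\tfrac{Q\phi(n)}{n}$ terms to cancel, leaving
$$\tfrac{q^m-1}{N}\phi(N) - qZ_{q,m,N}(0) = -(q-1)\tfrac{\phi(K_Q)}{K_Q}\Delta_0(n),$$
so $-\Delta_0(n) = K_Q\cdot\bigl[\tfrac{q^m-1}{N}\phi(N) - qZ_{q,m,N}(0)\bigr]/\bigl[(q-1)\phi(K_Q)\bigr]$. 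Substituting this value of $f_0(\gcd(Q,N),c,\Delta)$ into the split obtained in the first paragraph yields the stated formula.

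There is no serious obstacle here; the whole argument is bookkeeping driven by two observations. The only conceptual step is recognizing that splitting $f_0(N,c,\Delta)$ around $f_0(\gcd(Q,N),c,\Delta)$ is worthwhile, because the latter admits an unexpectedly clean evaluation thanks to $\gcd(Q,N) \mid Q$ and the fact that $\alpha^Q$ generates $\F^*$. This is what enables the dependence on $c \neq 0$ to collapse into a single contribution tied to $Z_{q,m,N}(0)$ via Theorem \ref{lem: zero trace}.
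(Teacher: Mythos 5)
Your proposal is correct and follows essentially the same route as the paper: both split $f_0(N,c,\Delta)$ around $f_0(\gcd(Q,N),c,\Delta)$, evaluate the latter as $-\Delta_0(\gcd(Q,N))$ via the periodicity identity $\Delta_{Qi}(\gcd(Q,N))=\Delta_0(\gcd(Q,N))$ together with $\sum_{a\in\F^*}\overline{\chi_q(ac)}=-1$, and then use Theorem \ref{lem: zero trace} to rewrite $-\Delta_0(\gcd(Q,N))$ in terms of $Z_{q,m,N}(0)$ before substituting into Lemma \ref{lem: Z formula}. The only difference is that you spell out the final algebraic identification explicitly, which the paper leaves implicit.
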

 
 \begin{proof}
  By Proposition \ref{prop: identities} and using the fact that $\sum_{a \in \F^*} \overline{\chi_q(ac)} = -1 $ for $c \neq 0$, we get
  \begin{align*}
   f_0(\gcd\left(Q, N\right), c, \Delta) &= \sum_{i=0 }^{q-2} \overline{\chi_q\left( \alpha^{Qi} c \right)}  \Delta_{Qi}(\gcd\left(Q, N\right)) = \sum_{i=0 }^{q-2} \overline{\chi_q\left( \alpha^{Qi} c \right)}\Delta_{0}(\gcd\left(Q, N\right))\\
   &= -\Delta_0(\gcd\left(Q, N\right)).
  \end{align*}
  Hence
  $$
  f_0(N, c, \Delta) = - \Delta_0(\gcd\left(Q, N\right)) + f_0(N, c, \Delta ) - f_0(\gcd\left(Q, N\right), c, \Delta).
  $$
  By Theorem \ref{lem: zero trace} we can write $\Delta_0(\gcd\left(Q, N\right))$ in terms of $ Z_{q,m,N}(0)$. Now the expression for $Z_{q,m,N}(c \neq 0)$ follows from Lemma \ref{lem: Z formula}.
 \end{proof}

 \begin{proof}[{\bf Proof of Theorem \ref{thm: uniformity}}]
  If every prime divisor of $N$ divides $Q$, then $\rad(N) = \rad(\gcd\left(Q, N\right))$ and $K_Q = 1$. Now the result follows from Lemma \ref{lem: uniformity}
  together with the fact that $f_0(d, c, \Delta) = f_0(\rad(d), c, \Delta)$ for every $d \mid q^m-1$.
 \end{proof}
 
   \begin{lem}\label{lem: legendre 0}
  Let $b, m > 1 $. Then $\rad(b^m-1) = \rad(\frac{b^m-1}{b-1})$ if and only if every prime factor of $b-1$ divides $m$. 
 \end{lem}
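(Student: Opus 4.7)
The plan is to reduce the radical equality to a divisibility condition and then evaluate $Q := (b^m-1)/(b-1)$ modulo primes of $b-1$. First I would observe that since $b^m - 1 = (b-1)Q$, every prime divisor of $Q$ is automatically a prime divisor of $b^m - 1$, so the equality $\rad(b^m-1) = \rad(Q)$ is equivalent to the reverse containment, namely that every prime factor of $b^m - 1$ divides $Q$. Because the prime factors of $b^m-1$ are exactly those of $b-1$ together with those of $Q$, this in turn is equivalent to the statement that every prime factor of $b-1$ divides $Q$.

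Next I would evaluate $Q$ modulo an arbitrary prime $\ell$ dividing $b - 1$. Writing
\[
Q = 1 + b + b^2 + \cdots + b^{m-1},
\]
and using $b \equiv 1 \pmod{\ell}$, each summand reduces to $1$, giving $Q \equiv m \pmod{\ell}$. Consequently $\ell \mid Q$ if and only if $\ell \mid m$.

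Combining these two reductions, every prime factor of $b-1$ divides $Q$ if and only if every prime factor of $b-1$ divides $m$, which is precisely the asserted characterization.

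There is no genuine obstacle here; the entire argument rests on the elementary congruence $Q \equiv m \pmod{\ell}$ for $\ell \mid (b-1)$. The only point to state carefully is the initial reduction, namely that $\rad(b^m-1) = \rad(Q)$ is equivalent to $\rad(b-1) \mid \rad(Q)$, which follows immediately from the factorization $b^m - 1 = (b-1)Q$.
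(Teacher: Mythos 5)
Your proof is correct and follows essentially the same route as the paper: both reduce the radical equality to the condition that every prime factor of $b-1$ divides $Q=(b^m-1)/(b-1)$, and both establish the key congruence $Q\equiv m\pmod{\ell}$ for $\ell\mid b-1$ (the paper via an explicit telescoping rearrangement of $1+b+\cdots+b^{m-1}$, you by reducing each summand modulo $\ell$). No gap.
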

 
 \begin{proof}
 First note that $\rad(b^m-1) = \rad(\frac{b^m-1}{b-1})$ if and only if $\rad(b-1) \mid \rad(\frac{b^m-1}{b-1})$.
 Now
 \begin{align*}
  \dfrac{b^m-1}{b-1} &= 1 + b + b^2 + \cdots  + b^{m-1}\\
  &= (1 - b) + (b - 1) + (b^2 - 1) + \cdots + (b^{m-1} - 1) \\
  & \quad + b + m - 1.
 \end{align*}
It follows that $\rad(b-1) \mid \rad(\frac{b^m-1}{b-1})$ if and only if $\rad(b-1)$ divides $m$. 
 \end{proof} 

As a consequence of Lemma \ref{lem: legendre 0}, we obtain the following immediate result. 

 \begin{cor}\label{cor: unif}
   Let $q$ be a power of a prime and $m \in \mathbb{N}$. If every prime factor of $q-1$ divides $m$, 
  then, for every element $c \in \F^*$, the number, $Z_{q,m,q^m-1}(c)$, of primitive elements $\xi \in \Fr$ with $\Tmq(\xi) = c$, is given by
  $$
  Z_{q,m,q^m-1}(c \neq 0) = \dfrac{\phi(q^m-1) -  Z_{q,m,q^m-1}(0)}{q-1}.
  $$
 \end{cor}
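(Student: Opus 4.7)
The plan is to reduce Corollary \ref{cor: unif} directly to Theorem \ref{thm: uniformity} applied with $N = q^m-1$, using Lemma \ref{lem: legendre 0} as the bridge. Theorem \ref{thm: uniformity} requires that every prime divisor of $N$ divide $Q = (q^m-1)/(q-1)$, so the task is to show this divisibility condition follows from the hypothesis that every prime factor of $q-1$ divides $m$.

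First I would invoke Lemma \ref{lem: legendre 0} with $b = q$. Under the hypothesis that every prime factor of $q-1$ divides $m$, the lemma gives
\begin{equation*}
\rad(q^m-1) = \rad\!\left( \frac{q^m-1}{q-1} \right) = \rad(Q).
\end{equation*}
In particular, every prime divisor of $N = q^m-1$ divides $Q$, which is exactly the hypothesis of Theorem \ref{thm: uniformity}.

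Next I would apply Theorem \ref{thm: uniformity} with $N = q^m-1$. Since $\frac{q^m-1}{N}\phi(N) = \phi(q^m-1)$ in this case, the formula from the theorem reads
\begin{equation*}
Z_{q,m,q^m-1}(c\neq 0) = \frac{\phi(q^m-1) - Z_{q,m,q^m-1}(0)}{q-1},
\end{equation*}
which is precisely the claim. Since no additional computation is required beyond these two citations, there is no real obstacle; the essential content already lives in Lemma \ref{lem: legendre 0} and Theorem \ref{thm: uniformity}, and the corollary amounts to observing that their hypotheses are compatible.
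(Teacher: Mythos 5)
Your proposal is correct and matches the paper's own argument: the paper likewise derives Corollary \ref{cor: unif} by combining Lemma \ref{lem: legendre 0} (to translate the hypothesis on $q-1$ and $m$ into $\rad(q^m-1)=\rad(Q)$) with Theorem \ref{thm: uniformity} applied to $N=q^m-1$. No differences worth noting.
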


 Some other consequences to Theorem \ref{thm: uniformity} are the following.

 \begin{cor}\label{cor: uniformity 0}
  Let $N \mid q^m-1$ such that $\rad(N) \mid Q$ and let $\mathbb{F}_{t}$ be any subfield of $\F$. Then, for all $c_t \in \mathbb{F}_{t}^*$ and $c_q \in \F^*$, we have
  $$
  Z_{t, m[\F:\mathbb{F}_t], N}(c_t)  = \dfrac{\frac{q^m-1}{N} \phi(N) - Z_{t, m[\F:\mathbb{F}_t], N}(0) }{t-1} = \dfrac{q}{t} Z_{q,m,N}(c_q).
  $$
 \end{cor}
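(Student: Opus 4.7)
The plan is to apply Theorem \ref{thm: uniformity} twice — once to the triple $(q,m,N)$ and once to $(t, m[\F:\mathbb{F}_t], N)$ — and then connect the two resulting identities via transitivity of the trace. The first step is to verify that the hypothesis of Theorem \ref{thm: uniformity} carries over to the smaller base field. Writing $Q_t := (t^{m[\F:\mathbb{F}_t]}-1)/(t-1) = (q^m-1)/(t-1)$, the inclusion $t-1 \mid q-1$ gives $Q \mid Q_t$, so $\rad(N) \mid Q \mid Q_t$ and every prime divisor of $N$ divides $Q_t$. Theorem \ref{thm: uniformity} then immediately yields the first equality
$$
Z_{t, m[\F:\mathbb{F}_t], N}(c_t) = \dfrac{\frac{q^m-1}{N} \phi(N) - Z_{t, m[\F:\mathbb{F}_t], N}(0)}{t-1}
$$
for any $c_t \in \mathbb{F}_t^*$.

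For the second equality, the idea is to compare the $\F$-trace and $\mathbb{F}_t$-trace partitions of the set of $N$-free elements of $\Fr^*$. By transitivity, $\operatorname{Tr}_{\Fr/\mathbb{F}_t}(\xi) = \operatorname{Tr}_{\F/\mathbb{F}_t}(\Tmq(\xi))$, so grouping $N$-free elements with fixed $\mathbb{F}_t$-trace $c_t$ according to their $\F$-trace $c$ gives
$$
Z_{t, m[\F:\mathbb{F}_t], N}(c_t) = \sum_{\substack{c \in \F \\ \operatorname{Tr}_{\F/\mathbb{F}_t}(c) = c_t}} Z_{q,m,N}(c).
$$
When $c_t \neq 0$ the value $c = 0$ does not appear in the fiber (since $\operatorname{Tr}_{\F/\mathbb{F}_t}(0) = 0$), so every $c$ in the sum lies in $\F^*$. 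Applying Theorem \ref{thm: uniformity} to $(q,m,N)$ shows $Z_{q,m,N}(c)$ is the same constant value $Z_{q,m,N}(c_q)$ for all $c \in \F^*$. Since $\operatorname{Tr}_{\F/\mathbb{F}_t} \colon \F \to \mathbb{F}_t$ is $\mathbb{F}_t$-linear and surjective, each fiber has exactly $q/t$ elements, producing
$$
Z_{t, m[\F:\mathbb{F}_t], N}(c_t) = \dfrac{q}{t}\, Z_{q,m,N}(c_q),
$$
as required.

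There is no serious obstacle: the entire statement reduces to Theorem \ref{thm: uniformity} once one observes the hypothesis propagates from $Q$ to $Q_t$, combined with the transitivity of the trace and the standard fiber-size count for $\operatorname{Tr}_{\F/\mathbb{F}_t}$. The only point requiring attention is the tiny verification $\rad(N) \mid Q_t$, which is why the hypothesis in the corollary is phrased with respect to the larger $Q$ rather than $Q_t$.
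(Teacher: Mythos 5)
Your proof is correct. The first equality is obtained exactly as the paper intends: you check that the hypothesis propagates from $Q$ to $Q_t = (q^m-1)/(t-1)$ via $Q \mid Q_t$ (because $t-1 \mid q-1$), and then invoke Theorem \ref{thm: uniformity} over the base field $\mathbb{F}_t$, noting $t^{m[\F:\mathbb{F}_t]} = q^m$. For the second equality you take a route the paper's one-line proof does not spell out: you decompose $Z_{t, m[\F:\mathbb{F}_t], N}(c_t)$ over the fibers of $\operatorname{Tr}_{\F/\mathbb{F}_t}$ using transitivity of the trace, observe that for $c_t \neq 0$ the fiber lies in $\F^*$ and has exactly $q/t$ elements, and then apply uniformity over $\F$. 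This is a clean, purely combinatorial mechanism for the factor $q/t$; the paper's implicit route instead passes through the explicit formula $Z_{q,m,N}(c\neq 0) = \frac{1}{q}\bigl(\frac{q^m-1}{N}\phi(N) - \Delta_0(N)\bigr)$ (and its analogue over $\mathbb{F}_t$), relying on the fact that $\Delta_0(N)$ is built from Gaussian periods of $\mathbb{F}_{q^m}$ and is therefore independent of the chosen base field. Your argument has the advantage of not needing that field-independence observation, and it makes the proportionality constant transparent; the paper's route has the advantage of producing the common value explicitly in terms of $\Delta_0(N)$, which is what feeds into Corollary \ref{cor: uniformity 1}. Both are valid, and your verification that the fiber over a nonzero $c_t$ excludes $c=0$ is exactly the point that makes the count come out to $q/t$ rather than requiring any correction term.
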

 
 \begin{proof}
  Follows from Theorem \ref{thm: uniformity} together with the fact that, since $N \mid (q^m-1)/(q-1)$ and $(q^m-1)/(q-1) \mid (q^m-1)/(t-1)$, then $N \mid (q^m-1)/(t-1)$. 
 \end{proof}
 
 \begin{proof}[{\bf Proof of Corollary \ref{cor: uniformity 1}}]
  The first equality follows directly from Theorem \ref{lem: zero trace} and Theorem \ref{thm: uniformity} together with the fact mentioned in the proof of Corollary \ref{cor: uniformity 0}.
  The second equality follows from the first together with Corollary \ref{cor: uniformity 0}. Indeed, 
  \begin{align*}
   q Z_{q,m,N}(0) - t Z_{t, m[\F:\mathbb{F}_t], N}(0)  &= q \left( \Delta_0(N) + Z_{q,m,N}(c_q)  \right) - t \left( \Delta_0(N) + Z_{t, m[\F:\mathbb{F}_t], N}(0)   \right)\\
   &= (q-t) \Delta_0(N) + qZ_{q,m,N}(c_q) - tZ_{t, m[\F:\mathbb{F}_t], N}(0)  \\
   &= (q-t) \Delta_0(N).
  \end{align*}
 \end{proof}

\begin{cor}
Let $N \mid q^m-1$ such that $\rad(N) \mid Q$. Then 
  $Z_{t, m[\F:\mathbb{F}_t], N}(0)$ is related to $Z_{q,m,N}(0)$ by the equation
  $$
  (q-1) Z_{t, m[\F:\mathbb{F}_t], N}(0) = \left( q - \dfrac{q}{t}   \right) Z_{q,m,N}(0) + \left( \dfrac{q}{t} - 1   \right) \dfrac{q^m-1}{N} \phi(N).
  $$
 \end{cor}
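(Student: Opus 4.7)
The plan is to deduce this relation from Corollary~\ref{cor: uniformity 0} by equating two expressions for the common value $Z_{t, m[\F:\mathbb{F}_t], N}(c_t)$, where $c_t \in \mathbb{F}_t^*$ is arbitrary. For brevity let me write $e := [\F:\mathbb{F}_t]$ and $A := \frac{q^m-1}{N} \phi(N)$.

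First, I would invoke Corollary~\ref{cor: uniformity 0}, which under the hypothesis $\rad(N)\mid Q$ furnishes the chain of equalities
$$
Z_{t, me, N}(c_t) \;=\; \dfrac{A - Z_{t, me, N}(0)}{t-1} \;=\; \dfrac{q}{t}\, Z_{q, m, N}(c_q),
$$
for any $c_q \in \F^*$. Next I would apply Theorem~\ref{thm: uniformity} to the right-hand side to rewrite $Z_{q,m,N}(c_q)$ in terms of $Z_{q,m,N}(0)$, giving
$$
Z_{q,m,N}(c_q) \;=\; \dfrac{A - Z_{q,m,N}(0)}{q-1}.
$$
Equating the extremes of the displayed chain produces the single identity
$$
\dfrac{A - Z_{t, me, N}(0)}{t-1} \;=\; \dfrac{q}{t} \cdot \dfrac{A - Z_{q, m, N}(0)}{q-1}.
$$

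The remaining work is purely algebraic: multiply both sides by $t(q-1)$ to clear denominators, expand, and collect the $A$-terms, using the identity $t(q-1) - q(t-1) = q - t$. This yields
$$
t(q-1)\, Z_{t, me, N}(0) \;=\; q(t-1)\, Z_{q,m,N}(0) \;+\; (q - t)\, A,
$$
and dividing through by $t$ recovers exactly the claimed formula. There is no serious obstacle; the only things to verify carefully are that the hypothesis $\rad(N) \mid Q$ is inherited by the subfield version (which is immediate since $(q^m-1)/(q-1)$ divides $(q^m-1)/(t-1)$ whenever $t-1 \mid q-1$), so that Theorem~\ref{thm: uniformity} and Corollary~\ref{cor: uniformity 0} are legitimately applicable over both $\F$ and $\mathbb{F}_t$.
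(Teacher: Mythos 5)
Your proposal is correct and follows essentially the same route as the paper: both arguments hinge on the relation $Z_{t,\,m[\F:\mathbb{F}_t],\,N}(c_t) = \frac{q}{t}\,Z_{q,m,N}(c_q)$ from Corollary~\ref{cor: uniformity 0} combined with the uniformity formula $Z(c\neq 0) = \bigl(\frac{q^m-1}{N}\phi(N) - Z(0)\bigr)/(q-1)$, the paper merely packaging the first step through the constant-difference identity of Corollary~\ref{cor: uniformity 1}. (One trivial slip: to clear denominators you must multiply by $t(t-1)(q-1)$ rather than $t(q-1)$, but the identity you then display, $t(q-1)Z_{t,\,m[\F:\mathbb{F}_t],\,N}(0) = q(t-1)Z_{q,m,N}(0) + (q-t)A$, is the correct one.)
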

 
 \begin{proof}
  By Corollary \ref{cor: uniformity 1} we have
  $$
  Z_{t, m[\F:\mathbb{F}_t], N}(0) - Z_{t, m[\F:\mathbb{F}_t], N}(c_t) = Z_{q,m,N}(0) - Z_{q,m,N}(c_q). 
 $$
 Hence, by Corollary \ref{cor: uniformity 0}, 
 \begin{align*}
  Z_{t, m[\F:\mathbb{F}_t], N}(0) &= Z_{q,m,N}(0)+ \left( \dfrac{q}{t} -1 \right)Z_{q,m,N}(c_q)\\
  &= Z_{q,m,N}(0)) + \left( \dfrac{q}{t} -1 \right) \left( \dfrac{ \frac{q^m-1}{N}\phi(N) - Z_{q,m,N}(0)  }{q-1}   \right)\\
  &= \dfrac{ \left(  q - \frac{q}{t}  \right) Z_{q,m,N}(0)  + \left( \frac{q}{t} - 1   \right) \frac{q^m-1}{N} \phi(N) }{q-1}.
 \end{align*}
\end{proof}

\section{Connection to $P_{q,m,N}(c)$}

In this section we briefly explore the seemingly related number, $P_{q,m,N}(c)$, of elements in $\Fr^*$ with order $N$ and with prescribed trace $c$. 
We start off by deriving a formula for the number of elements with order $N$ in an arbitrary subset $A$ of $\Fr$ and apply this to obtain a formula for $P_{q,m,N}(c)$ (see Lemma \ref{lem: order formula}).

Let $L_Q$ be the largest divisor of $q^m-1$ with the same radical as that of $Q$. 
In the special case of the zero trace and the case when $L_Q \mid N$, we show in Lemma \ref{lem: relation} the identity $ Z_{q,m,N}(0) = \frac{q^m-1}{N}P_{q,m,N}(0)$. 
As a consequence of this and of Cohen's result (see Theorem \ref{thm: cohen} in the Introduction) we characterize the existence of elements of order $N$ (with $L_Q \mid N$) in $\mathbb{F}_{q^m}^*$ 
with trace $0$.
Moreover from this and Corollary \ref{thm: tuxanidy} we give in Corollary \ref{thm: order DQ and mersenne} the number of elements of order $2(p+1)(p^2+1)$ with absolute trace zero in quartic extensions of 
Mersenne prime fields $\mathbb{F}_p$. 

For a subset $A \subseteq \Fr$ and a divisor $N$ of $q^m-1$, denote with $M_{q, m, N}(A)$ the number of non-zero elements in $ A$ having multiplicative order $N$ in $\Fr^*$.
In particular, $M_{q,m,q^m-1}(A)$ denotes the number of primitive elements of $\Fr$ that are contained in $A$.

\begin{lem}\label{lem: order formula}
 Let $q$ be a prime power and $m$ be a positive integer. Let $A$ be a subset of $\Fr$ and $N$ be a positive divisor of $q^m-1$. Then the number of elements 
 in $A$ that have multiplicative order $N$ is given by
 $$
 M_{q,m,N}(A) = \dfrac{1}{q^m-1}\sum_{d \mid N} \mu(d)\dfrac{N}{d} \left| \left\{ x \in \Fr^* \ : \ x^{\frac{(q^m-1)}{N}d} \in A  \right\}  \right|.
 $$
 In particular, for $c \in \F$, the number of elements $\beta \in \Fr^*$ with order $N$ and satisfying $\Tmq(\beta) = c$, is given by
 \begin{align*}
 P_{q,m,N}(c) &= \dfrac{N}{q(q^m-1)}\sum_{d \mid N} \dfrac{\mu(d)}{d}\sum_{a \in \F}\overline{\chi_q(ac)} \sum_{x \in \Fr^*} \chi_{q^m} \left( a x^{\frac{q^m-1}{N}d}  \right)\\
 &= \dfrac{1}{q} \left( \phi(N) + \sum_{i=0}^{q-2} \overline{\chi_q\left( \alpha^{Qi}c  \right)} \sum_{d \mid N} \mu(d) \eta_{Qi}^{\left( \frac{q^m-1}{N}d, r  \right)}   \right).
\end{align*}
\end{lem}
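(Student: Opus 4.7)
The plan is to prove the two formulas in sequence: first the general subset formula via a Möbius inversion argument, then the specialization for $P_{q,m,N}(c)$ via orthogonality of additive characters.

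For the first identity, the key observation is that, writing $k_d := (q^m-1)d/N$ for each divisor $d$ of $N$, the $k_d$-th power map on $\Fr^*$ has image exactly the unique subgroup $H_d := \langle \alpha^{k_d}\rangle$ of order $N/d$, and each fiber of this map has size $k_d$. Therefore the multiset in the lemma satisfies
\begin{equation*}
\bigl|\{x \in \Fr^* : x^{k_d} \in A\}\bigr| = k_d \cdot |A \cap H_d|.
\end{equation*}
After substituting this and simplifying, the right-hand side of the claimed formula reduces to $\sum_{d \mid N} \mu(d)\, |A \cap H_d|$. Since $H_d = \{y \in \Fr^* : y^{N/d}=1\}$, writing $|A \cap H_d| = \sum_{e \mid N/d} f(e)$ with $f(e)$ the number of elements of $A$ of order exactly $e$, and swapping the order of summation, one gets $\sum_{e \mid N} f(e)\sum_{d \mid N/e}\mu(d) = f(N) = M_{q,m,N}(A)$, as required. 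This is the only step where care is needed; the rest is bookkeeping.

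For the second identity, apply the first to $A = \{\zeta \in \Fr : \Tmq(\zeta)=c\}$, so that $M_{q,m,N}(A) = P_{q,m,N}(c)$. Use the orthogonality relation (\ref{eqn: orthogonality}) together with transitivity of the trace to expand
\begin{equation*}
\bigl|\{x \in \Fr^* : \Tmq(x^{k_d})=c\}\bigr| = \frac{1}{q}\sum_{a \in \F}\overline{\chi_q(ac)}\sum_{x \in \Fr^*}\chi_{q^m}\bigl(a\, x^{k_d}\bigr),
\end{equation*}
and substitute back. This yields the first displayed expression in the lemma directly.

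To obtain the second displayed expression, split the $a$-sum according to $a=0$ and $a \neq 0$. The term $a=0$ contributes $\sum_{d\mid N}\frac{\mu(d)}{d}(q^m-1) = (q^m-1)\phi(N)/N$, which, after multiplication by $\frac{N}{q(q^m-1)}$, produces the $\phi(N)/q$ summand. For $a \neq 0$, parametrize $a = \alpha^{Qi}$ with $i=0,\dots,q-2$ since $\F^* = \langle \alpha^Q\rangle$. Using again the $k_d$-to-one property of the $k_d$-th power map onto $H_d$, rewrite
\begin{equation*}
\sum_{x \in \Fr^*}\chi_{q^m}\bigl(\alpha^{Qi} x^{k_d}\bigr) = k_d \sum_{y \in \alpha^{Qi}\langle \alpha^{k_d}\rangle}\chi_{q^m}(y) = k_d\,\eta_{Qi}^{(k_d,q^m)},
\end{equation*}
recognizing the coset $\alpha^{Qi}\langle\alpha^{k_d}\rangle = C_{Qi}^{(k_d,q^m)}$. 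The factor $k_d = (q^m-1)d/N$ cancels against the prefactor $\frac{N}{q(q^m-1)d}$ to leave exactly $\frac{1}{q}\sum_{i=0}^{q-2}\overline{\chi_q(\alpha^{Qi}c)}\sum_{d\mid N}\mu(d)\eta_{Qi}^{(k_d,q^m)}$, finishing the proof. The main obstacle is conceptual rather than computational: noticing that translating by $\alpha^{Qi}$ turns the image subgroup $\langle\alpha^{k_d}\rangle$ into the cyclotomic class of type $((q^m-1)d/N,\,q^m)$ indexed by $Qi$, which is what allows the reformulation in terms of Gaussian periods.
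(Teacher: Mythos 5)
Your proposal is correct and follows essentially the same route as the paper: Möbius inversion combined with the fact that the $k_d$-th power map is $k_d$-to-one onto the subgroup of order $N/d$ for the first identity, and the orthogonality relation (\ref{eqn: orthogonality}) plus transitivity of the trace for the second. You merely run the Möbius step from right to left and write out explicitly the passage to Gaussian periods that the paper leaves as ``hence the result follows,'' but the substance is identical.
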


\begin{proof}
 If we define the arithmetic function $f(n) := \sum_{d \mid n}M_{q,m,d}(A),$ then by the M\"{o}bius inversion formula,
 $$
 M_{q,m,N}(A) = \sum_{d \mid N} \mu(d) f(N/d) = \sum_{d \mid N} \mu(d)\sum_{b \mid N/d} M_{q,m,b}(A).
 $$
 Since $f(N/d)$ represents the number of elements in $A$ with orders that are divisors of $N/d$,
 and each such element can be written uniquely as $\alpha^{\frac{(q^m-1)}{N}di}$ for $0 \leq i < N/d$, then
 \begin{align*}
 \sum_{b \mid N/d}M_{q,m,b}(A) &= \left| \left\{ 0 \leq i < N/d \ : \ \alpha^{\frac{(q^m-1)}{N}di} \in A  \right\}    \right|\\
&= \dfrac{N}{d(q^m-1)} \left| \left\{ x \in \Fr^* \ : \ x^{\frac{(q^m-1)}{N}d} \in A  \right\}  \right|.
 \end{align*}
 
 With regards to $P_{q,m,N}(c)$, observe that if we let $A_c := \{ x \in \Fr^* \ : \ \Tmq(x) = c  \}$, then $P_{q,m,N}(c) = M_{q,m,N}(A_c)$.
 Now it remains to obtain the expression for $| \{ x \in \Fr^* \ : \ x^{\frac{q^m-1}{N}d} \in A_c\} | = |\{ x \in \Fr^* \ : \ \Tmq(x^{\frac{q^m-1}{N}d}) = c\} |$, which can be done by applying
 (\ref{eqn: orthogonality}). Indeed, by (\ref{eqn: orthogonality}) and by the transitivity of the trace function,
 \begin{align*}
  \left|\left\{ x \in \Fr^* \ : \ \Tmq\left(x^{\frac{q^m-1}{N}d}\right) = c\right\} \right| &= \sum_{x \in \Fr^*} \dfrac{1}{q}\sum_{a \in \F}\chi_q\left( a\left( \Tmq\left( x^{\frac{q^m-1}{N}d} \right) - c   \right)  \right)\\
  &= \dfrac{1}{q}\sum_{a \in \F} \overline{\chi_q(ac)}    \sum_{x \in \Fr^*}\chi_q\left( \Tmq\left( a x^{\frac{q^m-1}{N}d} \right) \right)\\
  &= \dfrac{1}{q}\sum_{a \in \F} \overline{\chi_q(ac)}    \sum_{x \in \Fr^*} \chi_{q^m}\left( a x^{\frac{q^m-1}{N}d} \right).
 \end{align*}
 Hence the result follows.
\end{proof}

For $k \in \mathbb{Z}$ and $N \mid q^m-1$, we denote
\begin{align*}
\Gamma_k(N) &= \sum_{d \mid N} \mu(d) \eta_k^{\left( \frac{q^m-1}{N}d, q^m  \right)}.
\end{align*}
For $\beta \in \Fr$,
let $W_H(N, \beta)$ denote the Hamming weight of the $n$-tuple, where $n = (q^m-1)/N$, given by
$$
c(N, \beta) := \left( \Tmq\left(\beta \right), \Tmq\left( \beta \alpha^{N}  \right), \ldots, \Tmq\left( \beta \alpha^{(n-1)N}  \right)    \right).
$$
For an integer $t \neq 0$, let $\omega(t)$ be the number of distinct prime divisors of $t$.

The following proposition gives some general identities relating 
$Z_{q,m,N}(c)$, $P_{q,m,N}(c)$, through the M\"{o}bius inversion formula, as well as shows their connection to Hamming weights. 

\begin{prop}\label{prop: order identities}
Let $q$ be a power of a prime, $m$ be a positive integer, $N \mid q^m-1$ and  $k \in \mathbb{Z}$. Then the following identities hold:
\begin{align*}
\sum_{d \mid \rad(N)} \mu(d) \Delta_k\left( \dfrac{\rad(N)}{d}  \right) &= (-1)^{\omega(N)} \eta_k^{(\rad(N),q^m)};\\
\sum_{d \mid N} \Gamma_k(d) &= \eta_k^{\left( \frac{q^m-1}{N}, q^m   \right)};\\
\sum_{d \mid \rad(N)} \mu(d) \Delta_k\left( \dfrac{\rad(N)}{d}  \right) &= (-1)^{\omega(N)} \sum_{d \mid \left( \frac{q^m-1}{\rad(N)}  \right)} \Gamma_k(d);\\
(-1)^{\omega(N)}\sum_{d \mid \rad(N)} \mu(d) f_0 \left( \dfrac{\rad(N)}{d}, \Delta, c  \right) &= q \sum_{d \mid \left(\frac{q^m-1}{\rad(N)} \right)} P_{q,m,d}(c) - \dfrac{q^m-1}{\rad(N)};\\
f_0(N, \Delta, c) &= q \sum_{d \mid N} \mu(d) \sum_{b \mid \left(\frac{q^m-1}{d}\right)} P_{q,m,b}(c) - \dfrac{q^m-1}{N}\phi(N);  \\
Z_{q,m,N}(c) &= \sum_{d \mid N} \mu(d) \sum_{b \mid \left( \frac{q^m-1}{d}  \right)} P_{q,m,b}(c);\\
Z_{q,m,N}(0) &= \dfrac{q^m-1}{N} \phi(N) - \sum_{d \mid N} \mu(d) W_H(d, 1).
\end{align*}
\end{prop}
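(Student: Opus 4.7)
The plan is to prove the seven identities in a dependency order, using M\"obius inversion, inclusion-exclusion, and character orthogonality as the main tools. The three central observations are: (a) the map $M \mapsto \Delta_k(M)$ is itself a M\"obius transform of $e \mapsto \mu(e)\eta_k^{(e,q^m)}$; (b) an element of $\Fr^*$ is $N$-free if and only if it lies in no $(\Fr^*)^p$ with prime $p \mid N$; and (c) as $i$ runs through $\{0,1,\ldots,(q^m-1)/d-1\}$, the powers $\alpha^{id}$ exhaust the cyclic subgroup $(\Fr^*)^d$, which coincides with $\{x \in \Fr^* : \ord(x) \mid (q^m-1)/d\}$.

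First I would derive identities (1) and (2) by direct M\"obius inversion. For identity (1), treating $\Delta_k$ as the M\"obius transform indicated in (a) and inverting gives $\mu(M)\eta_k^{(M,q^m)} = \sum_{d\mid M}\mu(M/d)\Delta_k(d)$; evaluating at $M = \rad(N)$, reindexing $d \leftrightarrow \rad(N)/d$, and using $\mu(\rad(N)) = (-1)^{\omega(N)}$ yields (1). For identity (2), expanding $\Gamma_k(d)$ by definition and substituting $d = ef$ with $f \mid N/e$ turns the left-hand side into
\[
\sum_{e\mid N}\mu(e)\sum_{f\mid N/e}\eta_k^{((q^m-1)/f,\,q^m)},
\]
which collapses to $\eta_k^{((q^m-1)/N,q^m)}$ upon M\"obius inversion on the outer sum. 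Identity (3) is then immediate: identity (1) rewrites its left-hand side as $(-1)^{\omega(N)}\eta_k^{(\rad(N),q^m)}$, and identity (2) applied with $N' = (q^m-1)/\rad(N)$ (so that $(q^m-1)/N' = \rad(N)$) rewrites its right-hand side the same way.

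Next I would turn to identities (5) and (6) together. By (b) and the fact that $(\Fr^*)^{p_1}\cap(\Fr^*)^{p_2} = (\Fr^*)^{p_1p_2}$ for distinct primes $p_1,p_2$, inclusion-exclusion gives
\[
Z_{q,m,N}(c) = \sum_{d\mid N}\mu(d)\bigl|\{\xi \in (\Fr^*)^d : \Tmq(\xi) = c\}\bigr|.
\]
By (c), the inner cardinality equals $\sum_{b\mid(q^m-1)/d}P_{q,m,b}(c)$, which is identity (6). Identity (5) then follows immediately by substituting (6) into Lemma \ref{lem: Z formula} in the equivalent form $f_0(N,c,\Delta) = qZ_{q,m,N}(c) - \tfrac{q^m-1}{N}\phi(N)$.

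For identity (4), write $R = \rad(N)$ and compute $|\{\xi \in (\Fr^*)^R : \Tmq(\xi) = c\}|$ in two ways. By the previous paragraph, it equals $\sum_{d \mid (q^m-1)/R} P_{q,m,d}(c)$. On the other hand, applying (\ref{eqn: orthogonality}), noting $\F^* = \{\alpha^{Qi}\}_{i=0}^{q-2}$, and using the cyclotomic-class identity $\alpha^{Qi}\cdot(\Fr^*)^R = C_{Qi}^{(R,q^m)}$, the same cardinality equals $\tfrac{1}{q}\bigl[(q^m-1)/R + \sum_{i=0}^{q-2}\overline{\chi_q(\alpha^{Qi}c)}\,\eta_{Qi}^{(R,q^m)}\bigr]$. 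Equating these two evaluations and replacing $\eta_{Qi}^{(R,q^m)}$ using identity (1) at $k = Qi$ produces identity (4) after clearing $q$. Finally, for identity (7), observation (c) gives $W_H(d,1) = (q^m-1)/d - \sum_{b\mid(q^m-1)/d}P_{q,m,b}(0)$; summing against $\mu(d)$, using $\sum_{d\mid N}\mu(d)(q^m-1)/d = \tfrac{q^m-1}{N}\phi(N)$ and identity (6) at $c = 0$, delivers (7). I do not foresee a deep obstacle: everything reduces to routine bookkeeping with M\"obius inversion, inclusion-exclusion, and character orthogonality, with the most delicate step being the double-sum reindexing for identity (4).
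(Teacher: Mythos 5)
Your proof is correct, and it follows essentially the same route as the paper: the paper's own (very terse) argument is precisely M\"obius inversion combined with Lemma \ref{lem: Z formula}, Lemma \ref{lem: order formula} and orthogonality for the first six identities, and the derivation of the seventh from the sixth by counting nonzero traces over $(\mathbb{F}_{q^m}^*)^d$ --- which is exactly what you do, merely with the details (e.g.\ the inclusion--exclusion over the subgroups $(\mathbb{F}_{q^m}^*)^p$ for identity (6) and the double count for identity (4)) written out explicitly.
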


\begin{proof}
 To prove the first six identities  we use the M\"{o}bius inversion formula together with Lemma \ref{lem: Z formula} and Lemma \ref{lem: order formula}. The last identity follows from the one before it. Indeed, 
 noting that
 $$
 W_H(d, 1) = \sum_{c \in \F^*} \sum_{b \mid \left(\frac{q^m-1}{d} \right)} P_{q,r,b}(c),
 $$
 we get
 \begin{align*}
  \dfrac{q^m-1}{N} \phi(N) - Z_{q,m,N}(0) &= \sum_{c \in \F^*} Z_{q,m,N}(c) = \sum_{d \mid N} \mu(d) \sum_{c \in \F^*} \sum_{b \mid \left(\frac{q^m-1}{d} \right)} P_{q,r,b}(c)\\
  &= 
  \sum_{d \mid N}  \mu(d) W_H(d, 1).
 \end{align*}
\end{proof}

We can obtain a much simpler relation among $ Z_{q,m,N}(0)$ and $P_{q,m,N}(0)$ in the special case when $L_Q \mid N$. 

\begin{lem}\label{lem: relation}
 Let $D$ be the smallest positive divisor of $q-1$ such that $(q-1)/D$ is coprime to $Q$, and let $N \mid q^m-1$ such that $DQ \mid N$.
 Then $ Z_{q,m,N}(0)$ is related to $P_{q,m,N}(0)$ by the equation
 $$
 Z_{q,m,N}(0 ) = \dfrac{q^m-1}{N}  P_{q,m,N}(0) .
 $$
 In particular we have the following relation:
 \begin{align*}
 P_{q,m,q^m-1}(0) &= \dfrac{\phi(q^m-1)}{\phi(DQ)} P_{q,m,DQ}(0).
 \end{align*}
\end{lem}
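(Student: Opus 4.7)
The plan is to exploit the direct product decomposition $\Fr^* = \mu_N \times \langle \alpha^N \rangle$, where $\mu_N = \langle \alpha^M \rangle$ denotes the group of $N$-th roots of unity and $M = (q^m-1)/N$, combined with the fact that under the hypothesis $Q \mid N$ the subgroup $\langle \alpha^N \rangle$ sits inside $\F^*$, which makes the trace function $\F$-linearly equivariant along its cosets.

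First I would verify that the hypothesis $DQ \mid N$ forces $\gcd(M, N) = 1$. The point is that $D$ is the ``$Q$-part'' of $q-1$, so $DQ = L_Q$ already contains, for every prime $p \mid Q$, the full $p$-part of $q^m-1$; any additional prime factor of $N$ must come from $(q-1)/D$, whose primes avoid $Q$ entirely. For such a prime $p$, one has $v_p(q^m-1) = v_p(q-1)$, so $v_p(N) = v_p(q-1)$ forces $v_p(M) = 0$. I expect this coprimality verification to be the main subtlety, since it rests on the precise interplay between the factorizations of $q-1$, $Q$, and $L_Q = DQ$. Once $\gcd(M,N) = 1$, the subgroups $\mu_N$ and $\langle \alpha^N \rangle$ have trivial intersection and total cardinality $MN = q^m-1$, yielding the claimed internal direct product.

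Next I would transport the two conditions (trace zero, $N$-free) through the decomposition. For $\xi = \zeta \beta$ with $\zeta \in \mu_N$ and $\beta \in \langle \alpha^N \rangle \subseteq \F^*$, $\F$-linearity of $\Tmq$ gives $\Tmq(\xi) = \beta\, \Tmq(\zeta)$, so $\Tmq(\xi) = 0$ iff $\Tmq(\zeta) = 0$. Writing $\xi = \alpha^{Mj + Ni}$, the identity $\gcd(Mj + Ni, N) = \gcd(j, N)$ (using $\gcd(M, N) = 1$) shows that $\xi$ is $N$-free iff $\zeta$ has order exactly $N$. Counting pairs $(\zeta, \beta)$ then gives
\begin{equation*}
Z_{q,m,N}(0) \;=\; P_{q,m,N}(0) \cdot |\langle \alpha^N \rangle| \;=\; \frac{q^m-1}{N}\, P_{q,m,N}(0),
\end{equation*}
which is the first identity.

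For the second identity, I apply the first with $N = q^m - 1$ (where $M = 1$, yielding $Z_{q,m,q^m-1}(0) = P_{q,m,q^m-1}(0)$) and with $N = DQ$ (yielding $Z_{q,m,DQ}(0) = \frac{q^m-1}{DQ}\, P_{q,m,DQ}(0)$). Then I invoke Lemma \ref{lem: primitive zero trace} to relate $Z_{q,m,q^m-1}(0)$ to $Z_{q,m,Q}(0)$, and observe that $Z_{q,m,Q}(0) = Z_{q,m,DQ}(0)$, which follows directly from Theorem \ref{lem: zero trace} applied to $N = DQ$ (since then $K_Q = 1$ and $\gcd(Q, DQ) = Q$). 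Assembling these three identities produces $P_{q,m,q^m-1}(0) = \phi((q-1)/D)\, P_{q,m,DQ}(0)$, and the stated form follows from the multiplicativity identity $\phi(q^m-1) = \phi(DQ)\, \phi((q-1)/D)$, which holds because $DQ$ and $(q-1)/D$ have disjoint prime factors by the very definition of $D$.
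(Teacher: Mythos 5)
Your reduction to the direct product $\Fr^* = \mu_N \times \langle \alpha^N\rangle$ is elegant, but it rests on the claim that $DQ \mid N$ forces $\gcd\bigl((q^m-1)/N,\, N\bigr)=1$, and that claim is false in general. Your own justification reveals the gap: for a prime $p \mid N$ with $p \nmid Q$ you assert $v_p(N) = v_p(q-1)$, but the hypothesis $DQ \mid N$ gives no lower bound on $v_p(N)$ for such primes, since $p \nmid DQ$. Concretely, take $q=9$, $m=3$: then $Q = 91 = 7\cdot 13$, $q-1 = 2^3$, $D=1$, and $N = 2\cdot 91 = 182$ satisfies $DQ \mid N \mid q^3-1 = 728$, yet $M = (q^3-1)/N = 4$ and $\gcd(M,N)=2$. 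For such $N$ the subgroups $\mu_N$ and $\langle\alpha^N\rangle$ intersect nontrivially, the decomposition $\xi = \zeta\beta$ does not exist (or is not unique), and the equivalence ``$\xi$ is $N$-free iff $\zeta$ has order $N$'' has no meaning; so the first identity is not established for a genuinely nontrivial range of $N$ covered by the lemma. This is not a repairable bookkeeping issue within your framework: when $\gcd(M,N)>1$ the $N$-free elements form a union of cosets of $\langle\alpha^N\rangle$ while the order-$N$ elements do not, and relating the two requires tracking how multiplication by $\F^*$ mixes cyclotomic classes — which is exactly what the paper does, by computing $P_{q,m,N}(0)$ from Lemma \ref{lem: order formula} and collapsing the resulting period sums with the Ding--Yang identity (Lemma \ref{lem: ding2}), then comparing with Theorem \ref{lem: zero trace}. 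The only coprimality the paper needs, and actually proves, is that $(q^m-1)/N$ divides $(q-1)/D$ and is therefore coprime to $Q$ (not to $N$).

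The rest of your argument is sound where the coprimality does hold. In particular $\gcd(M,N)=1$ is automatic for $N=q^m-1$ and for $N=DQ$ (there $M=(q-1)/D$ is coprime to $Q$, hence to $\rad(DQ)=\rad(Q)$), so your derivation of the second identity — combining the first identity in those two cases with Lemma \ref{lem: primitive zero trace}, the observation $Z_{q,m,Q}(0)=Z_{q,m,DQ}(0)$, and $\phi(q^m-1)=\phi(DQ)\,\phi\bigl((q-1)/D\bigr)$ — is correct and matches the paper's. To fix the proposal you must either restrict the first identity to $N$ with $\gcd\bigl(N,(q^m-1)/N\bigr)=1$, or replace the direct-product argument by one that handles the general case, e.g.\ the paper's Gaussian-period computation.
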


\begin{proof}
 First note that since $N$ is a multiple of $DQ$, then $(q^m-1)/N$ divides $(q-1)/D$ and hence is coprime to $Q$. Let  $K$ be the largest divisor
 of $N$ that is coprime to $Q$. Then similarly as done in the proof of Lemma \ref{lem: delta sum}, we have, by Lemma \ref{lem: ding2},
 \begin{align*}
  \sum_{d \mid N} \mu(d) \sum_{i=0}^{q-2} \eta_{Qi}^{\left( \frac{q^m-1}{N}d, q^m \right)} 
  &= \sum_{b \mid K} \sum_{d \mid \gcd\left(Q, N\right)} \mu(bd) \sum_{i=0}^{q-2} \eta_{Qi}^{\left( \frac{q^m-1}{N}bd, q^m  \right)}
   = \sum_{b \mid K} \mu(b) \sum_{d \mid \gcd\left(Q, N\right)} \mu(d) \dfrac{(q-1)d}{\frac{q^m-1}{N}bd} \eta_0^{(d,q^m)}\\
   &= \dfrac{N}{Q} \sum_{b \mid K} \dfrac{\mu(b)}{b} \Delta_0(\gcd\left(Q, N\right)) = \dfrac{N \phi(K)}{Q K} \Delta_0(\gcd\left(Q, N\right)).
 \end{align*}
 Then by Lemma \ref{lem: order formula} and using the fact that $\dfrac{\phi(N)}{N} = \dfrac{\phi(K)}{K} \dfrac{\phi(\gcd\left(Q, N\right))}{\gcd\left(Q, N\right)}$, we get
 \begin{align*}
 P_{q,m,N}(0) &= \dfrac{N}{qQ} \left( Q \dfrac{\phi(N)}{N} + \dfrac{\phi(K)}{K} \Delta_0(\gcd\left(Q, N\right))  \right)\\
 &= \dfrac{N\phi(K)}{qQ K} \left( \dfrac{Q}{\gcd\left(Q, N\right)} \phi(\gcd\left(Q, N\right)) + \Delta_0(\gcd\left(Q, N\right))  \right).
\end{align*}
Now the first identity follows from Theorem \ref{lem: zero trace}. 

For the second, Lemma \ref{lem: primitive zero trace} and the first identity gives
\begin{align*}
 \dfrac{(q-1)/D}{\phi((q-1)/D)}P_{q,m,q^m-1}(0) =  Z_{q,m,Q}(0) &= Z_{q,m,DQ}(0) = \dfrac{q-1}{D} P_{q,m,DQ}(0).
\end{align*}
Hence we obtain
$$
P_{q,m,q^m-1}(0) = \phi\left( \dfrac{q-1}{D}  \right)P_{q,m,DQ}(0).
$$
Now the result follows by noticing that, since $DQ$ is coprime to $(q-1)/D$, then 
$$
\phi(q^m-1) = \phi\left( DQ \dfrac{q-1}{D}  \right) = \phi(DQ) \phi\left( \dfrac{q-1}{D} \right).
$$
\end{proof}

Note that $L_Q := DQ$ is the largest divisor of $q^m-1$ with the same radical as that of $Q$. Moreover
whenever $P_{q,m, q^m-1}(0) \neq 0$, the ratio, of the number of primitive elements with zero trace, to the number of elements of order $L_Q$ with trace zero,
is the same as that of the number of primitive elements to the number of elements of order $L_Q$. 

As a consequence of Theorem \ref{thm: cohen} and Lemma \ref{lem: relation} we obtain the following existence result. For this we will need the fact that for any 
$N \mid (q^m-1)$ and any $d \mid N$, 
the set of $N$-free elements is a subset of the set of $d$-free elements in $\mathbb{F}_{q^m}$.

\begin{thm}\label{thm: existence result}
 Let $m > 1$ and $N \mid q^m-1$ such that $L_Q \mid N$, where $L_Q$ is the largest divisor of $q^m-1$ with the same radical as that of $Q = (q^m-1)/(q-1)$. Then there exists an element $\xi \in \mathbb{F}_{q^m}^*$
 of order $N$ satisfying $\Tmq(\xi) = 0$ if and only if $m \neq 2$ and $(q,m) \neq (4,3)$.
\end{thm}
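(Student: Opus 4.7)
The plan is to combine Lemma \ref{lem: relation} with Cohen's Theorem \ref{thm: cohen}. Since $L_Q \mid N$, Lemma \ref{lem: relation} yields the identity $Z_{q,m,N}(0) = \frac{q^m-1}{N} P_{q,m,N}(0)$, so (both sides being non-negative integers) $P_{q,m,N}(0) > 0$ if and only if $Z_{q,m,N}(0) > 0$. This reduces the existence question for elements of order exactly $N$ with trace zero to the corresponding question for $N$-free elements, which is easily handled via a known primitive.

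For the forward direction, assume $m \neq 2$ and $(q,m) \neq (4,3)$. By Theorem \ref{thm: cohen} there exists a primitive element $\xi_0 \in \mathbb{F}_{q^m}$ with $\Tmq(\xi_0) = 0$. Every primitive element is $(q^m-1)$-free and therefore $d$-free for every divisor $d$ of $q^m-1$; in particular $\xi_0$ is $N$-free. Hence $Z_{q,m,N}(0) \geq 1$, and the displayed identity delivers $P_{q,m,N}(0) > 0$.

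For the converse I have to exclude $P_{q,m,N}(0) > 0$ in the two exceptional cases. When $(q,m) = (4,3)$, the factorizations $q^m-1 = 63 = 3^2 \cdot 7$ and $Q = 21 = 3 \cdot 7$ give $L_Q = 63 = q^m-1$, so $L_Q \mid N$ forces $N = q^m-1$. Elements of order $N$ are then the primitive elements, and the $(q,m)=(4,3)$ clause of Theorem \ref{thm: cohen} says none of them has trace zero.

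The case $m = 2$ is a direct order argument. Any $\xi \in \mathbb{F}_{q^2}^*$ with $\Tmq(\xi) = \xi + \xi^q = 0$ satisfies $\xi^q = -\xi$, hence either $\xi \in \mathbb{F}_q$ (characteristic $2$, so $\operatorname{ord}(\xi) \mid q-1$) or $\xi^{q-1} = -1$ (odd characteristic, so $\operatorname{ord}(\xi) \mid 2(q-1)$). It therefore suffices to verify that $L_Q \nmid q-1$ in even characteristic and $L_Q \nmid 2(q-1)$ in odd characteristic, which contradicts $L_Q \mid N \mid \operatorname{ord}(\xi)$. In characteristic $2$ this is immediate from $\gcd(q-1,q+1)=1$, which forces $L_Q = q+1 > q-1$. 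In odd characteristic, writing $q-1 = 2^b v$ and $q+1 = 2^a u$ with $u,v$ odd and coprime, a short calculation gives $L_Q = 2^{a+b}u$; the divisibility $2^{a+b}u \mid 2^{b+1}v$ then forces $u = 1$ (since $\gcd(u,v)=1$) and $a \leq 1$, yielding $q+1 = 2^a \leq 2$, which is impossible. The only mildly delicate step I foresee is this $m=2$ bookkeeping, but it is purely elementary; the real content of the theorem comes from Lemma \ref{lem: relation} and Theorem \ref{thm: cohen}.
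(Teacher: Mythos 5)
Your proof is correct, and the forward direction is exactly the paper's: Cohen's theorem supplies a primitive (hence $N$-free) element of trace zero, so $Z_{q,m,N}(0)>0$, and Lemma \ref{lem: relation} converts this into $P_{q,m,N}(0)>0$. Where you diverge is the converse. The paper stays inside the $Z$-framework: if $P_{q,m,N}(0)>0$ then $Z_{q,m,N}(0)>0$ by Lemma \ref{lem: relation}, and since $Q\mid N$ every $N$-free element is $Q$-free, so $Z_{q,m,Q}(0)>0$ --- contradicting $Z_{q,m,Q}(0)=0$ in the exceptional cases, which the paper reads off from Lemma \ref{lem: primitive zero trace} together with the (known) failure of Theorem \ref{thm: cohen} at $c=0$ there. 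You instead dispose of the two exceptional cases by hand: for $(q,m)=(4,3)$ you observe $L_Q=q^m-1$ forces $N=63$, reducing to the primitive case; for $m=2$ you give a direct order computation showing that any $\xi$ with $\xi+\xi^q=0$ has order dividing $q-1$ or $2(q-1)$, while $L_Q=q+1$ (characteristic $2$) or $L_Q=2^{a+b}u$ (odd characteristic) cannot divide these --- I checked the $2$-adic bookkeeping and it is right, using $\gcd(q-1,q+1)\le 2$. What each approach buys: the paper's reduction is shorter and uniform in the two exceptional cases, but it leans on the non-existence of trace-zero primitives there as an external fact (strictly more than the literal statement of Theorem \ref{thm: cohen}, which only asserts existence for $c\ne 0$); your $m=2$ argument is self-contained and does not need that input, at the cost of an explicit case analysis, though your $(4,3)$ case still invokes the same external non-existence fact. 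Both arguments are sound.
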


\begin{proof}
 First by Lemma \ref{lem: primitive zero trace} there exists a primitive element with trace $0$ if and only if there exists a $Q$-free element with trace $0$. By Theorem \ref{thm: cohen} this happens
 if and only if $m \neq 2$ and $(q,m) \neq (4,3)$. 
 Since a $(q^m-1)$-free (primitive) element is also $d$-free for any divisor $d$ of $q^m-1$, if follows that $Z_{q, m, N}(0) > 0$ if $m\neq 2$ and $(q,m) \neq (4,3)$;
 by Lemma \ref{lem: relation} we also have $P_{q, m, N}(0) > 0$.
 On the other hand, if $m = 2$ or $(q,m) = (4,3)$, suppose on the contrary that $P_{q,m,N}(0) > 0$. Then by Lemma \ref{lem: relation} we have $Z_{q,m,N}(0) > 0$. Since $Q \mid N$, then $Z_{q,m,Q}(0) > 0$,
 contradicting the fact that $Z_{q,m,Q}(0) = 0$ when $m = 2$ or $(q,m) = (4,3)$. 
\end{proof}

We apply Lemma \ref{lem: relation} together with Corollary \ref{thm: tuxanidy} to obtain the following consequence. 

\begin{thm}\label{thm: order DQ and mersenne}
 Let $p$ be a Mersenne prime and let $Q = (p^4-1)/(p-1) = (p+1)(p^2+1)$. Then the number of non-zero elements in $\mathbb{F}_{p^4}^*$ with order $2Q$ and absolute trace zero
 is $\phi(2Q)/(p+1) = 2\phi(p^2+1)$.
\end{thm}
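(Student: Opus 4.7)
The plan is to reduce the theorem to an application of Lemma \ref{lem: relation} together with the value of $Z_{p,4,p^4-1}(0)$ obtained in Corollary \ref{thm: tuxanidy}.

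First I would identify $L_Q$ in this setting. Writing $p = 2^v - 1$ for some prime $v$, we have $p+1 = 2^v$, $p-1 = 2(2^{v-1}-1)$ and $p^2+1 = 2(2^{2v-1}-2^v+1)$, so $p+1$ is a power of $2$ while the $2$-adic valuations of $p-1$ and $p^2+1$ equal $1$. Since $p+1 \mid Q$, we see $\rad(Q) = 2\cdot \rad((p^2+1)/2)$. The largest divisor of $p-1$ coprime to $Q$ is $(p-1)/2$ (as observed already in the proof of Corollary \ref{thm: tuxanidy}, where $K_Q = (p-1)/2$), hence the parameter $D$ of Lemma \ref{lem: relation} is $D = 2$ and $L_Q = DQ = 2Q$.

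With $N = 2Q = L_Q$, the second identity of Lemma \ref{lem: relation} yields
\begin{equation*}
P_{p,4,\,p^4-1}(0) \;=\; \frac{\phi(p^4-1)}{\phi(2Q)}\, P_{p,4,\,2Q}(0),
\end{equation*}
so that
\begin{equation*}
P_{p,4,\,2Q}(0) \;=\; \frac{\phi(2Q)}{\phi(p^4-1)}\, Z_{p,4,\,p^4-1}(0),
\end{equation*}
using $P_{p,4,\,p^4-1}(0) = Z_{p,4,\,p^4-1}(0)$. By Corollary \ref{thm: tuxanidy},
\begin{equation*}
Z_{p,4,\,p^4-1}(0) \;=\; \frac{1}{p}\left(\phi(p^4-1) - \phi\!\left(\tfrac{p^4-1}{p+1}\right)\right).
\end{equation*}

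It remains to carry out the Euler-$\phi$ bookkeeping. I would write $p^4-1 = 2^{v+2} b' c'$, where $b' = 2^{v-1}-1$ and $c' = (p^2+1)/2$ are both odd; a short congruence argument modulo $b'$ shows $\gcd(b',c')=1$, so $\phi(p^4-1) = 2^{v+1}\phi(b')\phi(c')$ and $\phi((p^4-1)/(p+1)) = \phi(4b'c') = 2\phi(b')\phi(c')$, giving the clean ratio $\phi((p^4-1)/(p+1))/\phi(p^4-1) = 1/2^v = 1/(p+1)$. Plugging in yields
\begin{equation*}
P_{p,4,\,2Q}(0) \;=\; \frac{\phi(2Q)}{\phi(p^4-1)}\cdot\frac{\phi(p^4-1)}{p}\left(1 - \tfrac{1}{p+1}\right) \;=\; \frac{\phi(2Q)}{p+1}.
\end{equation*}
Finally, since $2Q = 2^{v+2} c'$ with $c'$ odd, $\phi(2Q) = 2^{v+1}\phi(c') = 2(p+1)\phi(p^2+1)$, so $\phi(2Q)/(p+1) = 2\phi(p^2+1)$, as claimed.

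The only real obstacle is the Euler-$\phi$ calculation, which is not deep but requires careful tracking of the $2$-adic parts of $p-1$, $p+1$, and $p^2+1$; the Mersenne hypothesis $p+1 = 2^v$ is precisely what makes the ratio telescope to $1/(p+1)$.
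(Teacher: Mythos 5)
Your proposal is correct and follows essentially the same route as the paper: identify $D=2$ so that $L_Q = 2Q$, apply the second identity of Lemma \ref{lem: relation} together with Corollary \ref{thm: tuxanidy}, and reduce the Euler-$\phi$ ratio $\phi\bigl((p^4-1)/(p+1)\bigr)/\phi(p^4-1)$ to $1/(p+1)$ using that $p+1$ is a power of $2$ while $p-1$ and $p^2+1$ have $2$-adic valuation $1$. The only cosmetic difference is that you compute the $\phi$-ratio via an explicit coprime factorization $p^4-1 = 2^{v+2}b'c'$, whereas the paper runs the Euler product formula directly; both are equivalent.
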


\begin{proof}
 In this case $Q = (p+1)(p^2+1)$ and so the only prime diving both $p-1$ and $Q$ is $2$. Since $2$ has multiplicity $1$ in the factorization of $p-1$, it follows that $D = 2$,
 where $D$ is as defined in Lemma \ref{lem: relation}. Then by Lemma \ref{lem: relation} and Corollary \ref{thm: tuxanidy} we get
 \begin{align*}
 P_{q,m,2Q}(0) &= \dfrac{\phi(2Q)\left(  \phi(p^4-1) - \phi\left( \frac{p^4-1}{p+1} \right) \right)}{p \phi(p^4-1)} \\
 &= \dfrac{\phi(2Q)}{p} - \dfrac{\phi(2Q) \phi\left( \frac{p^4-1}{p+1} \right)}{p \phi(p^4-1)}.
 \end{align*}
 By Euler's product formula for $\phi$ and using the fact that $2 \mid p-1$ while $p+1$ is a power of $2$, note that
 \begin{align*}
  (p+1)\phi\left( (p-1)(p^2+1) \right) &= (p+1)(p-1)(p^2+1) \prod_{l \mid (p-1)(p^2+1)} \left( 1 - \dfrac{1}{l}  \right) \\
  &= (p+1)(p-1)(p^2+1) \prod_{l \mid (p+1)(p-1)(p^2+1)} \left( 1 - \dfrac{1}{l}  \right)\\
  &= \phi(p^4-1).
 \end{align*}
 Now from the fact that $(p^4-1)/(p+1) = (p-1)(p^2+1)$, the above yields
 \begin{align*}
  P_{q,m,2Q}(0) &= \dfrac{\phi(2Q)}{p} - \dfrac{\phi(2Q) }{p (p+1)}\\
  &= \dfrac{\phi(2Q)}{p+1}.
 \end{align*}
 Remains to note that, since $p$ is a Mersenne prime, 
 $$
 \dfrac{\phi(2Q)}{p+1} = 2(p^2+1) \prod_{l \mid p^2+1} \left(  1 - \dfrac{1}{l} \right) = 2\phi(p^2+1).
 $$
\end{proof}

\section*{Acknowledgments}

We thank the anonymous referee for the many helpful suggestions that improved the presentation of this paper.

\appendix
\section{}

The following table gives the numbers of primitive elements in the quartic extensions of Mersenne prime fields having absolute trace zero, for the first ten Mersenne primes.
See
Corollary \ref{thm: tuxanidy} for an explicit formula.
SAGE software was used for the computations.

\begin{table}[ht]
 \centering
 \caption{The number of primitive elements of $\mathbb{F}_{p^4}$ with absolute zero trace for the first ten Mersenne primes $p$}
 \begin{tabular}{|c|c|}
   \hline
   Mersenne prime, $p$ & $Z_{p,4,p^4-1}(0)$\\
   \hline
   $2^2-1$ & $8$ \\
   \hline
   $2^3-1$ & $80$ \\
   \hline
   $2^5-1$ & $6,912$\\
   \hline
   $2^7-1$ & $464,256$\\
   \hline
   $2^{13}-1$ & $111,974,400,000$\\
   \hline
   $2^{17}-1$ & $519,390,596,431,872$\\
   \hline
   $2^{19}-1$ & $30,572,599,504,748,544$\\
   \hline
   $2^{31}-1$ & $1,968,482,608,781,191,263,129,600,000$\\
   \hline
   $2^{61}-1$ & $2,159,465,982,279,294,537,199,679,191,$\\
              & $374,585,254,935,265,280,000,000,000$\\
   \hline
   $2^{89}-1$ & $51,505,739,520,752,637,174,787,391,794,396,705,748,179,$\\
              & $291,647,742,969,497,437,393,928,825,245,616,046,080$\\
   \hline
   \end{tabular}
   \end{table}

\newpage


\begin{thebibliography}{115}

\bibitem{bach}
E. Bach, J. Shallit, {\em Algorithmic number theory}, Vol. 1.  Efficient
algorithms. Foundations 
of Computing Series. MIT Press, Cambridge, MA, 1996. 

\bibitem{GTH}
R.P. Brent, P. Zimmermann, {\em The great trinomial hunt}, Notices Amer. Math. Soc. 58 (2011), no. 2, 233--239.

\bibitem{carlitz}
L. Carlitz, {\em A theorem of Dickson on irreducible polynomials}, Proc. Amer. Math. Soc. 3 (1952),
693--700.

\bibitem{cohen0}
S.D. Cohen, {\em Primitive elements and polynomials with arbitrary trace}, Discrete
Math. 83 (1990), no. 1, 1--7.

\bibitem{cohen survey}
S.D. Cohen, {\em Explicit theorems on generator polynomials}, Finite Fields Appl. 11, no. 3 (2005), 337--357.

\bibitem{cohen and presern (2005)}
S.D. Cohen, M. Pre\u{s}ern, 
{\em Primitive finite field elements with prescribed trace}, 
Southeast Asian Bull. Math. 29 (2005), no. 2, 283--300. 


\bibitem{cohen and presern (2008)}
S.D. Cohen, M. Pre\u{s}ern, {\em The Hansen-Mullen primitive conjecture: completion of proof}, 
Number theory and polynomials, 89--120, London Math. Soc. Lecture Note Ser., 352, Cambridge Univ. Press, Cambridge, 2008.

 \bibitem{ding}
C. Ding, J. Yang, {\em Hamming weights in irreducible cyclic codes}, Discrete Math. 313 (2013), no. 4, 434--446.


\bibitem{ham}
K.H. Ham, G.L. Mullen, {\em Distribution of irreducible polynomials of small degrees over finite fields}, Math. Comp. 67 (221)
(1998) 337--341.

\bibitem{hansen}
T. Hansen, G.L. Mullen, {\em Primitive polynomials over finite fields}, Math. Comput. 59 (1992) 639--643, S47--S50.

\bibitem{koma}
B. Koma, D. Panario, Q. Wang, {\em The number of irreducible polynomials of degree $n$ over $\F$ 
with given trace and constant terms}, Discrete Mathematics, 310 (2010), 1282--1292.

\bibitem{kuzmin}
E. N. Kuz’min, {\em On irreducible polynomials over a finite field} (Russian) Sibirsk. Mat. Zh. 30 (1989), no. 6, 98--109;
translation in Siberian Math. J.

\bibitem{kuzmin 91}
E. N. Kuz’min, {\em A class of irreducible polynomials over a finite field}, (Russian) 
Dokl. Akad. Nauk SSSR 313 (1990),
no. 3, 552–555; translation in Soviet Math. Dokl. 42 (1991), no. 1, 45--48.

\bibitem{lidl}
 R. Lidl, H. Niederreiter, {\em Finite fields}, Cambridge University Press, Cambridge, (1997).

\bibitem{catalan}
P. Mih\u{a}ilescu, {\em Primary cyclotomic units and a proof of Catalan's conjecture}, J. Reine Angew. Math. 572 (2004), 167--195.

\bibitem{handbook}
G.L. Mullen and D. Panario, {\em Handbook of finite fields}, CRC Press, 2013.

\bibitem{ribenboim}
P. Ribenboim, {\em The New Book of Prime Number Records}, 3rd ed., Springer-Verlag, New York, 1995.

\bibitem{wan}
D. Wan, {\em Generators and irreducible polynomials over finite fields}, Math. Comp. 66 (219) (1997) 1195--1212.

\bibitem{yucas}
J.L. Yucas, {\em Irreducible polynomials over finite fields with prescribed trace/prescribed constant term}, Finite
Fields Appl. 12 (2006), no. 2, 211--221.



\end{thebibliography}
\end{document}